\newtheorem{theorem}{Theorem}[section]
\newtheorem{assumption}[theorem]{Assumption}
\newtheorem{claim}[theorem]{Claim}
\newtheorem{corollary}[theorem]{Corollary}
\newtheorem{example}[theorem]{Example}
\newtheorem{lemma}[theorem]{Lemma}
\newtheorem{proposition}[theorem]{Proposition}
\newtheorem{remark}[theorem]{Remark}
\newcommand{\R}{\mathbb{R}}
\newenvironment{proof}[1][Proof]{\noindent\textit{#1.} }{\hfill \rule{0.5em}{0.5em}}
\begin{document}

\title{\textbf{Spreading speeds for multidimensional reaction-diffusion systems of the prey-predator type}}
\author{\textsc{Arnaud Ducrot$^a$, Thomas Giletti$^{b}$ and Hiroshi Matano$^c$} \\
$^{a}${\small Normandie Univ, UNIHAVRE, LMAH, FR-CNRS-3335, ISCN, 76600 Le Havre, France}\\
$^b${\small Univ. Lorraine, Institut Elie Cartan Lorraine, UMR 7502, Vandoeuvre-l\`{e}s-Nancy, F-54506, France}\\
$^c${\small Meiji Institute for Advanced Study of Mathematical Sciences}\\
{\small Meiji University, 4-21-1 Nakano, Tokyo 164-8525, Japan}
}

\maketitle

\begin{abstract}
We investigate spreading properties of solutions of a large class of two-component reaction-diffusion systems, including prey-predator systems as a special case. By spreading properties we mean the long time behaviour of solution fronts that start from localized (i.e. compactly supported) initial data. Though there are results in the literature on the existence of travelling waves for such systems, very little has been known ---~at least theoretically~--- about the spreading phenomena exhibited by solutions with compactly supported initial data. The main difficulty comes from the fact that the comparison principle does not hold for such systems. Furthermore, the techniques that are known for travelling waves such as fixed point theorems and phase portrait analysis do not apply to spreading fronts. In this paper, we first prove that spreading occurs with definite spreading speeds. Intriguingly, two separate fronts of different speeds may appear in one solution ---~one for the prey and the other for the predator~--- in some situations.

\vspace{0.2in}\noindent \textbf{Key words} Long time behaviour, spreading speeds, reaction-diffusion, prey-predator systems.

\vspace{0.1in}\noindent \textbf{2010 Mathematical Subject Classification} 35K57, 35B40,
92D30.

\end{abstract}

\section{Introduction}

The aim of this work is to study the asymptotic spreading speed for a class of two-component reaction-diffusion systems including prey-predator models. The systems we consider are given in the form
\begin{equation}\label{1.1}
\begin{cases}
\left(\partial_t -d\Delta\right)u=u F\left(u,v \right)\\
\left(\partial_t-\Delta \right)v=v G\left(u,v \right)
\end{cases}\;\;\text{ for $t>0$ and $x\in \R^N$}.
\end{equation}
This system is supplemented by the initial conditions:
\begin{equation}\label{initial_datum}
u(0,x)\equiv u_0(x),\;\;v(0,x)\equiv v_0(x),
\end{equation}
where $u_0$, $v_0$ are bounded nonnegative functions with compact support. In \eqref{1.1}, $d>0$ is a given positive constant and
we shall assume throughout this paper that the nonlinearities satisfy the following:
\begin{assumption}\label{ASS-F}
The function $F:[0,\infty)\times [0,\infty)\to \R$ is of class $C^1$ and satisfies:
\begin{itemize}
\item [$(a)$] for each $u> 0$, the map $v\mapsto F(u,v)$ is strictly decreasing;
\item [$(b)$] $F(1,0)=0$ and for each $u\in [0,1)$, $F(u,0)>0$ (monostability);
\item [$(c)$] for each $u \geq 0$, $F(u,0) \leq F(0,0)$ (weakly KPP hypothesis);
\end{itemize}
\end{assumption}
and
\begin{assumption}\label{ASS-G}
The function $G:[0,\infty)\times [0,\infty)\to \R$ is of class $C^1$ and satisfies
\begin{itemize}
\item [$(a)$] for each $v\geq 0$, the map $u\mapsto G(u,v)$ is nondecreasing;
\item [$(b)$] $G(0,0) < 0 < G(1,0)$;
\item [$(c)$] for each $u \geq 0$, the map $v \mapsto G(u,v)$ is nonincreasing (strongly KPP hypothesis).
\end{itemize} 
\end{assumption}
The above set of assumptions is mainly motivated by prey-predator models in ecology, but such systems also arise in other fields of science, including models in the combustion theory, chemistry or epidemiology. In the context of prey-predator systems, $u(t,x)$ and $v(t,x)$ typically denote, respectively, the density of the prey and that of the predator.

As we mentioned before, the goal of this paper is to study the large time behaviour, in particular spreading properties, of solutions of \eqref{1.1} under Assumptions~\ref{ASS-F}, \ref{ASS-G}, and the weak dissipativity Assumption~\ref{ASS-U} which we will introduce below.

Spatial propagation for reaction-diffusion systems for which a comparison principle holds has been the subject of an important amount of works. We emphasize that front propagation in scalar reaction-diffusion equations has been widely studied and especially for KPP type nonlinearities. We refer for instance to Kolmogorov, Petrovsky and Piskunov~\cite{KPP}, Fisher~\cite{Fisher}, Aronson and Weinberger \cite{AW78},  
Weinberger \cite{Weinberger-02} (for periodic medium), Berestycki et al \cite{BHNa} for more general medium and the references cited therein. The case of cooperative systems is also quite well understood and we refer to Li et al \cite{LWL}, Lewis et al \cite{Lewis} and the references cited therein. 
We also refer to Liang and Zhao \cite{Liang-Zhao, Liang-Zhao-bis} for general results on monotone semiflows with and without heterogeneities. See also the books \cite{BH} and \cite{VVV} for more exhaustive references on this topic.

As regards prey-predator systems, which are the main subject of the present paper, analysis of the spreading properties becomes much more difficult because of the lack of the comparison principle. Nevertheless, as far as travelling wave solutions are concerned, there is a wide literature on the existence and basic properties of travelling waves of prey-predator systems. We may, for instance, refer the reader to the pioneering works~\cite{Dunbar83,Dunbar86,Gardner}. We also refer to the more recent papers~\cite{Huang,Huang-al}, as well as to~\cite{Li-Xiao} and the references cited therein for a good survey on travelling wave solutions for prey-predator systems. However, travelling waves constitute only a special class of solutions.  While one may expect that the long time behaviour of more general solutions of the Cauchy problem \eqref{1.1}--\eqref{initial_datum} is largely dictated by such travelling waves (at least when they exist, which may depend on the structure of the underlying ODE system), the question of how general solutions of the prey-predator system \eqref{1.1}--\eqref{initial_datum} actually behave has been for the most part an open problem apart from some results on the local stability of travelling waves; see, e.g., \cite{Gardner-Jones}.

How solutions of \eqref{1.1}--\eqref{initial_datum}  behave when starting from localized initial data is an important question in mathematical ecology, epidemiology and other fields of sciences.  Despite its importance, little has been known about the long time behaviour and spreading properties of such solutions, largely because of the lack of the comparison principle when \eqref{1.1} is of the prey-predator type.

There are some -- though not many -- works on the long time behaviour and spreading speeds for other problems for which the comparison principle does not hold. We refer to Weinberger et al \cite{WKS} where a partially cooperative system is studied using some ideas developed by Thieme in \cite{Thieme-79} for scalar integral equations. We also refer to Wang and Castillo-Chavez \cite{Wang} for integro-difference systems and to Fang and Zhao \cite{Fang-Zhao} and the references therein for scalar integro-differential equations. However, these problems are far from prey-predator or epidemic models, and the techniques in those works do not apply to the class of systems we consider. There are some results in \cite{Ducrot, Ducrot-preprint} on the long time behaviour of a special class of prey-predator and epidemic systems but their methods work only for problems of a particular structure and do not apply to our systems. As far as the authors know, the results presented here are the first theoretical results on spreading phenomena for systems of the form \eqref{1.1}-\eqref{initial_datum} under Assumptions~\ref{ASS-F} and~\ref{ASS-G}, or any systems of a similar kind.\\

Before presenting concrete examples of prey-predator systems satisfying the above assumptions, we discuss our hypothesis within this ecological context as follows: 
\begin{itemize}
\item Hypotheses~\textit{$(a)$} of both Assumptions~\ref{ASS-F} and~\ref{ASS-G} correspond to the predation effect. Roughly speaking, the presence of more predators decreases the population of the prey while the presence of more preys increases the population of the predator. Due to this asymmetry, the comparison principle does not hold for the system~\eqref{1.1}.

\item When there is no predator, hypothesis~\textit{$(b)$} of Assumption~\ref{ASS-F} ensures that the prey population grows to the positive stable state $u \equiv 1$ (see also Theorem~\ref{LE1}), which is the carrying capacity of the environment for the prey. 
The condition $G(1,0)>0$ in Assumption~\ref{ASS-G} \textit{$(b)$} means that such an amount of prey is enough to sustain a positive density of predators, while the condition $G(0,0)< 0$ (hence $G(0,v)<  0$ for $v\geq 0$) implies that the predator cannot survive without the prey.


\item Hypotheses~\textit{$(c)$} of Assumptions~\ref{ASS-F} and~\ref{ASS-G} roughly mean that 
the growth rate of the prey and that of the predator reach their maximal value at small densities. This fact suggests that the propagation speed of the two species be determined ``linearly" at the leading edge, as in the scalar KPP equation. As we will see later, this is indeed the case, and it will play an essential role in estimating the spreading speeds of the two species precisely.
If we relax the KPP hypotheses, we expect that the dynamics of the solutions remain partly similar but may become partly more complicated (see Remark \ref{rmk:KPP}).
\end{itemize}

In many typical prey-predator models, the system may be rewritten as:
\begin{equation}\label{predator-prey}
\begin{split}
&\left(\partial_t-d\Delta\right)u=u h\left(u\right)-\Pi\left(u\right)v,\\
&\left(\partial_t-\Delta\right)v=v\left[\mu\Pi\left(u\right)-a\right].
\end{split}
\end{equation}
Function $h:[0,\infty)\to \R$ represents the intrinsic growth rate of the prey while function $\Pi:[0,\infty)\to [0,\infty)$ stands for the functional response to predation or capture rate. It is assumed to depend solely upon the prey density.
Finally $\mu>0$ denotes the conversion rate of biomass, $a>0$ the death rate of predator while $d>0$ denotes the normalized diffusion rate for the prey. Typical examples of functions $h$ and $\Pi$ can be found for instance in \cite{May, OM} and the survey paper of Cheng et al \cite{CHL}. These functions are typically given in the form
\begin{equation}\label{example}
\begin{split}
&h(u):\;\;r\left(1-u \right)\; \mbox{ or }\;\;r\frac{1-u}{1+\varepsilon u},\\
&\Pi(u):\;\;\frac{m u^n}{b+u^n} \mbox{ with } n\geq 1 , \;  \mbox{ or }\;\;m(1-e^{-u}),
\end{split}
\end{equation} 
where $r$, $\varepsilon$, $m$ and $b$ are positive constants.
Another example is
\begin{equation}\label{Lotka}
\begin{split}
&\left(\partial_t-d\Delta\right)u=u \left[1-u-bv\right],\\
&\left(\partial_t-\Delta\right)v=v\left[\mu bu-a\right],
\end{split}
\end{equation}
which is obtained by setting $h(u)=1-u$ and $\Pi(u)=bu$ in \eqref{predator-prey}. Here $a$, $b$, $\mu$ are positive constants.
These examples clearly satisfy Assumptions~\ref{ASS-F} and~\ref{ASS-G} \textit{$(a)$}, \textit{$(c)$}, as well as $G(0,0)<0$. Moreover, the condition $G(1,0)>0$ is satisfied when $\mu$ is sufficiently large. As we will prove in Section~\ref{sec:dissip}, these examples also satisfy the weak dissipativity property below, a concept that will play an important role in our arguments.\\

In order to state our main results, we need to impose one more assumption, namely the weak dissipativity of the semiflow generated by system \eqref{1.1}. To define this concept, we introduce some notation. Let $X={\rm BUC}\left(\R^N,\R^2\right)$ denote the Banach space of $\R^2-$valued bounded and uniformly continuous functions on $\R^N$ endowed with the usual sup-norm. We then define the set $C\subset X$ by
\begin{equation*}
C=\left\{\left(\varphi,\psi\right)\in X:\;0\leq \varphi\leq 1\text{ and }\psi\geq 0\right\}.
\end{equation*}
Our initial data will always be chosen in this set. 

Here, we point out that under Assumptions~\ref{ASS-F} and~\ref{ASS-G}, the set $C$ is positively invariant under \eqref{1.1}, therefore this system generates a strongly continuous nonlinear semiflow $\left\{U(t):C\to C\right\}_{t\geq 0}$. Indeed, although the comparison principle does not hold for the full system~\eqref{1.1}, one can still apply partial comparison arguments on each equation separately. In particular, since 0 is a solution of the $v$-equation for any $u$, it immediately follows that $v(t,x) \geq 0$ for any $t >0$ and $x \in \R^N$, provided that $(u_0,v_0) \in C$. Similarly, one can easily check that $0 \leq  u (t,x) \leq 1$ for any $t>0$ and $x \in \R^N$. 
We also note that, by the strong maximum principle, provided that the functions $u_0$ and $v_0$ are not trivial, we have $0< u(t,x)<1$ and $0< v(t,x)$ for all $t >0$ and $x \in \R^N$.

Now we are ready to state our third assumption:
\begin{assumption}\label{ASS-U}
The nonlinear semiflow $U$ is {\bf weakly dissipative} in $C$, in the sense that for each $\kappa>0$ there exists $M(\kappa)>0$ such that 
\begin{equation*}
U(t)\left[ C\cap B_X (0,\kappa )\right]\subset C\cap B_X (0,M(\kappa)),\;\;\forall t\geq 0,
\end{equation*}
where $B_X (0, \kappa)$ denotes the ball of center $0$ and radius $\kappa$ in the Banach space~$X$.
\end{assumption}

In other words, for any bounded set of initial data, the set of associated solutions of the Cauchy problem remains bounded as $t\to\infty$. The difference of this concept from the usual notion of point dissipativity is that the bound $M(\kappa)$ may depend on $\kappa$.
%
In Section~\ref{sec:dissip}, we will provide general conditions that guarantee the weak dissipativity of the semiflow $\{U(t)\}_{t\geq 0}$. In particular, we will prove that such a property holds true when  $F(\cdot,+\infty)<0$. This condition has a very natural meaning in the ecological context, namely that preys do not survive when there are too many predators.

\section{Main results}\label{sec:main}

In this section, we state the main results that are discussed and proved in this work and we propose an outline of this paper.

\subsection{Uniform spreading}

In this subsection, we present our main results on the spreading properties of solutions of~\eqref{1.1}. We will see that the profile of the solutions differs drastically depending on the values of $c^*$ and $c^{**}$ which we define by:
\begin{equation}\label{speed_prey}
c^* = 2\sqrt{dF(0,0)},
\end{equation}
\begin{equation}\label{speed_predator}
c^{**}=2\sqrt{G(1,0)}.
\end{equation}
The value $c^*$ denotes the spreading speed of the prey $u$ in the absence of the predator. This can be understood by noting that, when $v \equiv 0$, the $u$-equation in \eqref{1.1} becomes a scalar reaction-diffusion equation of the KPP type:
\begin{equation}\label{eq-scalar}
\left(\partial_t -d\Delta\right)u(t,x)=u(t,x)F\left(u(t,x),0\right),\;t>0,\;x\in\R^N.
\end{equation}
This classical case has been studied extensively since the pioneering works~\cite{Fisher,KPP}. In particular, it is well known that for any nonnegative and nontrivial compactly supported initial data, the associated solution spreads with the speed $c^*$ defined above; see \cite{AW78}. We will recall this result more precisely in Theorem~\ref{LE1} in Section~\ref{sec:outer}.

On the other hand, the value $c^{**}$ denotes the spreading speed of the predator $v$ when the prey is abundant, namely when $u \equiv 1$. In other words, $c^{**}$ corresponds to the spreading speed of solutions of 
\begin{equation}\label{eq-scalar-bis}
(\partial_t - \Delta ) v  = v G \left(1,v \right), \; t >0 , \; x \in \R^N, 
\end{equation}
with nonnegative and nontrivial compactly supported initial data. Note that \eqref{eq-scalar-bis} is not a KPP type equation in the usual sense, because we do not assume that it admits a positive stationary state. However, it is completely straightforward to check by using the same arguments as in~\cite{AW78} that solutions exhibit a similar spreading behaviour, the only difference being that they may no longer converge to a stationary state after the propagation but instead grow indefinitely.\\

Under the above notations and assumptions, our first main result deals with the case when the prey spreads faster than the predator:
\begin{theorem}[Slow predator]\label{THEO1}
Let Assumptions \ref{ASS-F}, \ref{ASS-G} and \ref{ASS-U} be satisfied, and $u_0$, $v_0$ be two given nontrivial compactly supported functions such that $(u_0,v_0)\in C$.

If $c^{**}< c^*$, then the solution $(u,v)\equiv \left(u(t,x),v(t,x)\right)$ of \eqref{1.1} with initial data $\left(u_0,v_0\right)$ satisfies:
\begin{itemize}
\item [$(i)$] $\displaystyle\lim_{t\to\infty}\sup_{\|x\|\geq ct} u(t,x)=0$ for all $c>c^*$;

\item [$(ii)$] for all $c^{**}<c_1<c_2<c^*$ and each $c>c^{**}$ one has:
\begin{equation*}
\displaystyle\lim_{t\to\infty}\sup_{c_1t\leq \|x\|\leq c_2t} \left|1-u(t,x)\right|+\sup_{\|x\|\geq ct} v(t,x)=0;
\end{equation*}
\item [$(iii)$] there exists $\varepsilon>0$  such that for each $c\in \left[0,c^{**}\right)$ one has
\begin{equation*}
\liminf_{t\to \infty} \inf_{\| x \| \leq c t } v(t,x) \geq \varepsilon,
\end{equation*}
\begin{equation*}
\limsup_{t\to\infty} \sup_{\| x \| \leq c t }  u(t,x)\leq 1-\varepsilon \text{ and } \liminf_{t\to\infty} \inf_{\| x \| \leq c t }  u(t,x) \geq \varepsilon.
\end{equation*}
\end{itemize}
\end{theorem}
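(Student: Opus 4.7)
The plan is to establish the upper bounds in (i) and the $v$-decay in (ii) by partial comparison with scalar equations, then to establish the lower bounds by a two-step bootstrap between the prey and predator equations despite the absence of a coupled comparison principle.

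\textit{Upper bounds.} Since $v\geq 0$, Assumption~\ref{ASS-F}(a) yields $F(u,v)\leq F(u,0)$, so $u$ is a subsolution of the scalar KPP equation~\eqref{eq-scalar}. Comparison with the scalar solution starting from $u_0$, together with the classical Aronson--Weinberger result recalled in Theorem~\ref{LE1}, gives $u(t,x)\to 0$ uniformly for $\|x\|\geq ct$, $c>c^*$. Analogously, since $0\leq u\leq 1$, Assumptions~\ref{ASS-G}(a) and (c) imply $G(u,v)\leq G(1,0)$, so $(\partial_t-\Delta)v\leq vG(1,0)$; a comparison with the corresponding linear equation (or with~\eqref{eq-scalar-bis}) yields $v(t,x)\to 0$ uniformly for $\|x\|\geq ct$, $c>c^{**}$. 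This proves (i) and the $v$-part of (ii).

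\textit{Prey spreads at speed $c^*$.} Fix $c^{**}<c_1<c_2<c^*$ and pick $c'\in(c^{**},c_1)$. For any $\delta>0$, the previous step provides $T>0$ such that $v(t,x)<\delta$ on $\{\|x\|\geq c't\}$ for $t\geq T$. In this set, using the $C^1$ regularity of $F$ and the uniform bound on $v$ from Assumption~\ref{ASS-U}, the $u$-equation satisfies
\[
(\partial_t-d\Delta)u\geq u\bigl(F(u,0)-L\delta\bigr)
\]
for some Lipschitz constant $L>0$. This is a small $L^\infty$-perturbation of the scalar KPP equation, whose linear speed $2\sqrt{d(F(0,0)-L\delta)}$ tends to $c^*$ as $\delta\to 0$. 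I would then plant a compactly supported Aronson--Weinberger-type subsolution in the annulus and let it travel radially at any speed in $(c_2,c^*)$; the seed is available because the strong maximum principle forces $u>0$ everywhere at time $T$. Letting $\delta\to 0$ yields $u(t,x)\to 1$ uniformly on $\{c_1t\leq\|x\|\leq c_2t\}$, completing (ii).

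\textit{Predator spreads at speed $c^{**}$, and inner bounds on $u$.} Running the bootstrap in the opposite direction, from the previous step we have $u\geq 1-\eta$ on $\{\|x\|=c_1t\}$ for all large $t$, with $c_1$ arbitrarily close to $c^{**}$ and $\eta$ arbitrarily small. Assumption~\ref{ASS-G}(a) then yields $G(u,v)\geq G(1-\eta,v)$ in a neighbourhood of this moving sphere, and since $G(1-\eta,0)>0$ for $\eta$ small, the $v$-equation is locally bounded below by a scalar KPP equation with speed $2\sqrt{G(1-\eta,0)}$ arbitrarily close to $c^{**}$. An inward-travelling compactly supported subsolution then propagates $v$ away from zero inside $\{\|x\|\leq ct\}$ for any $c<c^{**}$, yielding $\liminf_t\inf_{\|x\|\leq ct}v(t,x)\geq\varepsilon$. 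The upper bound $u\leq 1-\varepsilon$ in (iii) then follows from $v\geq\varepsilon$ together with Assumption~\ref{ASS-F}(a), which gives $F(1,v)<F(1,0)=0$, so that $u\equiv 1$ is locally repelling; a compactness/maximum-principle argument closes the gap. The persistence bound $u\geq\varepsilon$ is obtained by an analogous subsolution construction combined with Assumption~\ref{ASS-U}, exploiting that $F(0,0)>0$ and that $v$ remains uniformly bounded.

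The main obstacle is the second step: extracting $u\to 1$ in the annulus from only an $L^\infty$ upper estimate on $v$ without a coupled comparison principle. The key device is that on moving frames $\|x\|\sim ct$ with $c^{**}<c<c^*$ the predator $v$ becomes arbitrarily small uniformly in $t$, so the $u$-equation reduces to an arbitrarily small perturbation of the scalar KPP equation; what drives the whole argument is a quantitative moving-frame version of the robustness of KPP spreading under such small bounded perturbations.
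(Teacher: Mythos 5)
Your treatment of part~$(i)$ and of part~$(ii)$ follows essentially the same route as the paper: upper bounds by partial comparison of each equation with a scalar problem, and then, in the intermediate zone, exploiting that $v$ is uniformly small on moving frames $\|x\|\geq c't$ with $c'>c^{**}$, so that $u$ is a super-solution of the slightly perturbed monostable equation $\partial_t u = d\Delta u + uF(u,\varepsilon)$, whose spreading speed tends to $c^*$ as $\varepsilon\to 0$. Your sketch glosses some points the paper handles carefully (the sub-solutions must be kept inside the region where $v<\delta$, hence the moving-frame construction with weighted Dirichlet eigenfunctions; and covering the whole expanding annulus requires two families of sub-solutions at speeds $c_1$ and $c_2$ plus an extended maximum principle), but under the KPP hypothesis these are standard and your plan is viable there.

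Part~$(iii)$, however, contains a genuine gap, and it is precisely where the paper's main effort lies. Your bootstrap reads: $u\geq 1-\eta$ near the moving sphere $\|x\|=c_1t$ (with $c_1$ slightly above $c^{**}$), hence $G(u,v)\geq G(1-\eta,v)$ there, hence an inward-travelling KPP sub-solution propagates $v$ at speed close to $c^{**}$ throughout $\{\|x\|\leq ct\}$. But the bound $u\geq 1-\eta$ holds only in the intermediate zone, \emph{outside} the predator's range; inside the ball $\{\|x\|\leq ct\}$, $c<c^{**}$, the very statement you are proving asserts $u\leq 1-\varepsilon$, and a priori $u$ could drop much lower under predation, in which case $G(u,v)\leq G(u,0)$ may become negative (recall $G(0,0)<0$), destroying the sub-solution once it leaves a neighbourhood of the sphere. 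The argument is circular: to propagate $v$ inward you need a lower bound on $u$ in the final zone, and to keep $u$ bounded below there you need control on $v$. The same objection applies to your lower bound $u\geq\varepsilon$: knowing only $v\leq M(\kappa)$ does not make the $u$-equation monostable, since $F(0,v)$ can be negative for large $v$ (e.g. Lotka--Volterra). The paper resolves this not by direct sub-solution propagation but by a three-step persistence-type contradiction argument (pointwise weak spreading, pointwise spreading, uniform spreading): if $v$ (or $u$, or $1-u$) became small along a subcritical moving frame, one passes to limiting entire solutions of the drift system (compactness via weak dissipativity and parabolic estimates), uses the strong maximum principle and a Liouville-type lemma for the monostable equation with subcritical drift to force $u_\infty\equiv 1$ near the frame, and then shows $v$ must regrow there since $G(1,0)>0$ and the drift is below $c^{**}$, a contradiction; uniformity over the ball is a further separate step. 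Some argument of this dynamical-systems flavour (or an equally substantial replacement) is indispensable, and your proposal does not supply it.
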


Our second result describes the behaviour of the solutions of \eqref{1.1} when the prey is not able to outrace the predator.
Our precise result reads as:
\begin{theorem}[Fast predator]\label{THEO2}
Let Assumptions \ref{ASS-F}, \ref{ASS-G} and \ref{ASS-U} be satisfied, and $u_0$, $v_0$ be two given nontrivial compactly supported functions such that $(u_0,v_0)\in C$.

If $c^{**}\geq c^* $, then the solution $(u,v)\equiv \left(u(t,x),v(t,x)\right)$ of \eqref{1.1} with initial data $\left(u_0,v_0\right)$ satisfies
\begin{itemize}
\item [$(i)$] $\displaystyle\lim_{t\to\infty}\sup_{\|x\|\geq ct} u(t,x) + v(t,x) =0$ for all $c>c^{*}$;

\item [$(ii)$] there exists $\varepsilon>0$ such that for each $c\in \left[0,c^{*}\right)$ one has
\begin{equation*}
\liminf_{t\to \infty} \inf_{\| x \| \leq c t } v(t,x) \geq \varepsilon,
\end{equation*}
\begin{equation*}
\limsup_{t\to\infty} \sup_{\| x \| \leq c t }  u(t,x)\leq 1-\varepsilon \text{ and } \liminf_{t\to\infty} \inf_{\| x \| \leq c t }  u(t,x) \geq \varepsilon.
\end{equation*}\end{itemize}
\end{theorem}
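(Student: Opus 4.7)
The plan is to handle (i) and (ii) in sequence. Part (i) is essentially KPP comparison applied in turn to each equation, while part (ii) --- the uniform persistence inside the cone --- is the substantive new content.

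For the upper bound on $u$ in (i), the key observation is that $v\geq 0$ and Assumption~\ref{ASS-F}(a) yield $F(u,v)\leq F(u,0)$, so $u$ is a subsolution of the scalar KPP equation \eqref{eq-scalar}. The parabolic comparison principle together with Theorem~\ref{LE1} then gives $\sup_{\|x\|\geq ct}u(t,x)\to 0$ for every $c>c^{*}$. For the upper bound on $v$, fix $c>c^{*}$ and pick $c_1\in(c^{*},c)$. By the previous step and by continuity of $G$ at the origin (using $G(0,0)<0$), choose $\eta>0$ so that $G(\eta,0)<0$ and $T_0\geq 0$ so that $u(t,x)\leq \eta$ on $\{t\geq T_0,\,\|x\|\geq c_1 t\}$. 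Assumption~\ref{ASS-G}(a),(c) then give $(\partial_t-\Delta)v \leq -\delta v$ on that moving half-space for some $\delta>0$; combined with the global bound on $v$ afforded by Assumption~\ref{ASS-U} and the compact support of $v_0$, a standard moving-frame supersolution argument forces $v\to 0$ uniformly in $\|x\|\geq ct$.

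Part (ii) would require three sub-steps. \emph{(a)} I would first show that $u$ is uniformly bounded below in the cone $\{\|x\|\leq ct\}$ for $c<c^{*}$. No direct sub/super-solution on $u$ is available, since $F(0,v)$ may well be negative for large $v$; instead, the argument must exploit the feedback in the system. Since $G(0,v)\leq G(0,0)<0$ for every $v\geq 0$, predators cannot persist where prey is scarce, and in turn the prey rebounds via its KPP growth once predators decay. I would make this rigorous through a contradiction-compactness scheme: parabolic regularity to extract entire limit solutions along any putative sequence of space-time points where $u\to 0$, followed by a uniform persistence argument to rule out trivial $\omega$-limits. \emph{(b)} Once $u\geq\varepsilon_1$ is known on the cone, I would derive the lower bound on $v$ by linearising the $v$-equation in a region just behind the prey front, where by Theorem~\ref{LE1} one has $u$ close to $1$: there $G(u,v)\approx G(1,0)>0$ for small $v$, and a KPP-type travelling subsolution of speed $c^{**}$ yields $v\geq\varepsilon_2>0$ throughout $\{\|x\|\leq ct\}$ with $c<c^{*}$. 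It is precisely here that the hypothesis $c^{**}\geq c^{*}$ enters, ensuring that this subsolution can keep pace with the prey front rather than lagging behind it. \emph{(c)} The upper bound $u\leq 1-\varepsilon_3$ then follows from Assumption~\ref{ASS-F}(a)-(b), which give $F(1,v)<F(1,0)=0$ for $v\geq\varepsilon_2$, via a scalar comparison on the $u$-equation.

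The main obstacle is sub-step (a): producing a uniform positive lower bound on $u$ in the cone in the absence of a comparison principle for the full system and without any sign assumption on $F(0,v)$ for large $v$. Translating the qualitative prey--predator feedback --- predator decay where prey is scarce, prey rebound where predator is scarce --- into a quantitative persistence estimate will require genuinely non-monotone techniques (parabolic regularity, entire solutions, uniform persistence theory) and constitutes the technical core of the theorem. Once (a) is in place, (b) and (c) follow by comparatively standard scalar arguments.
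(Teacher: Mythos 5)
Your treatment of part (i) is essentially the paper's: scalar comparison for $u$, then a moving-frame exponential supersolution for $v$ on $\{\|x\|\ge c_1 t\}$ where $u\le\eta$, with Assumption~\ref{ASS-U} controlling $v$ on the inner boundary; this part is fine. The genuine gap is in part (ii), and it sits precisely at your step (b) and in the sequential structure (a)$\to$(b)$\to$(c). You invoke Theorem~\ref{LE1} to claim that $u$ is ``close to $1$ just behind the prey front''; but the scalar theory only yields the supersolution bound $u\le\overline u$, i.e.\ an upper bound, never a lower bound, and in the fast-predator regime there is no intermediate zone where $u\approx 1$ --- indeed the very conclusion you are proving asserts $u\le 1-\varepsilon$ throughout the final zone. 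Moreover, the output of your step (a), namely $u\ge\varepsilon_1$ on the cone, is not enough to launch a KPP-type subsolution for $v$: the relevant low-density growth rate would be $G(\varepsilon_1,0)$, which may well be negative (only $G(u,0)$ for $u$ near $1$ is guaranteed positive by Assumption~\ref{ASS-G}), so no travelling subsolution of speed close to $c^{**}$ is available from (a) alone. The lower bounds on $u$ and on $v$ cannot be decoupled in the order you propose; they must be obtained simultaneously.

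The mechanism that actually closes the argument is a single coupled contradiction/persistence scheme: if $v$ (or $u$, or $1-u$) stayed small for all large times along a frame moving with speed $c^0<\min\{c^*,c^{**}\}$, then --- using the weak dissipativity bound $M(\kappa)$ for compactness, the strong maximum principle, $G(0,\cdot)\le G(0,0)<0$ to kill $v$ in the limit, and compactly supported Dirichlet-eigenfunction subsolutions together with a Liouville-type lemma for the monostable equation with drift (this is where $c^0<c^*$ enters) --- $u$ rebounds to $1$ locally uniformly in that moving frame; then, because $c^0<c^{**}$ and $v_0\not\equiv0$, $v$ grows to a profile bounded below by a positive constant, which in turn forces $u$ strictly below $1$, contradicting the rebound. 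This is the content of Lemma~\ref{lem:1}, and it is where the interplay you describe qualitatively (``predator decay where prey is scarce, prey rebound where predator is scarce'') is made quantitative. In addition, even granting such estimates, they are only $\limsup$-type statements along moving frames; statement (ii) requires $\liminf$ bounds uniform over the expanding balls $\|x\|\le ct$, and this upgrade needs two further steps (Lemmas~\ref{lem:2} and~\ref{lem:3}), in which the uniformity of the weak-spreading constant with respect to bounded sets of initial data --- hence Assumption~\ref{ASS-U} again --- is essential. Your proposal conflates pointwise and uniform spreading and omits this upgrade; step (c) is fine in spirit once a uniform lower bound on $v$ is actually in hand.
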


\begin{tabular}{c}
\includegraphics[bb=0 0 1079 306, width=0.96\textwidth]{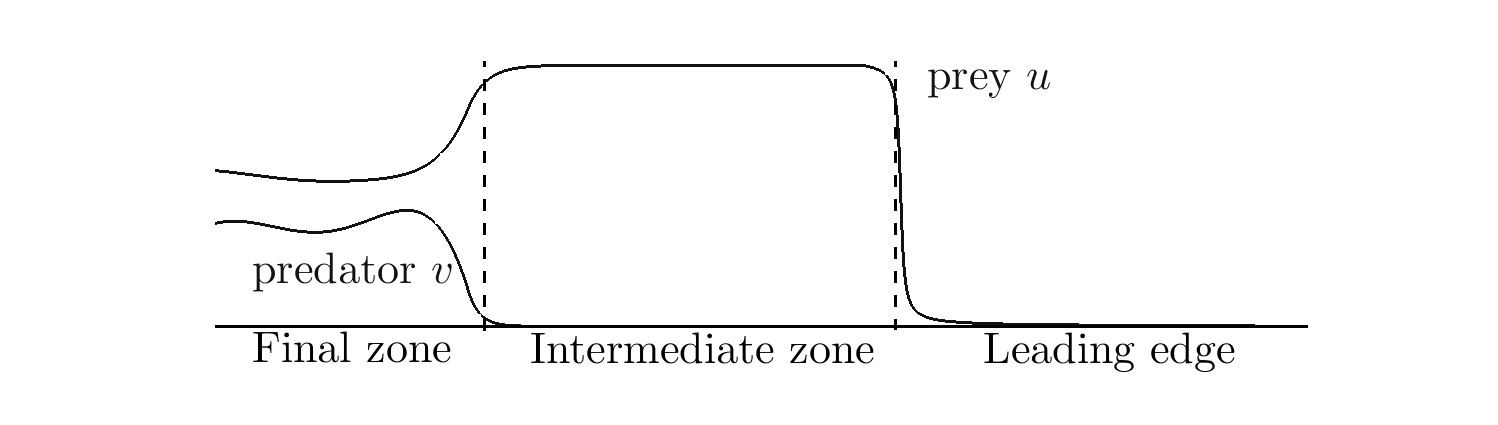}\\
\centerline{Figure 1: Slow predator}\vspace{3mm}\\
\includegraphics[bb=0 0 1557 469, width=0.96\textwidth]{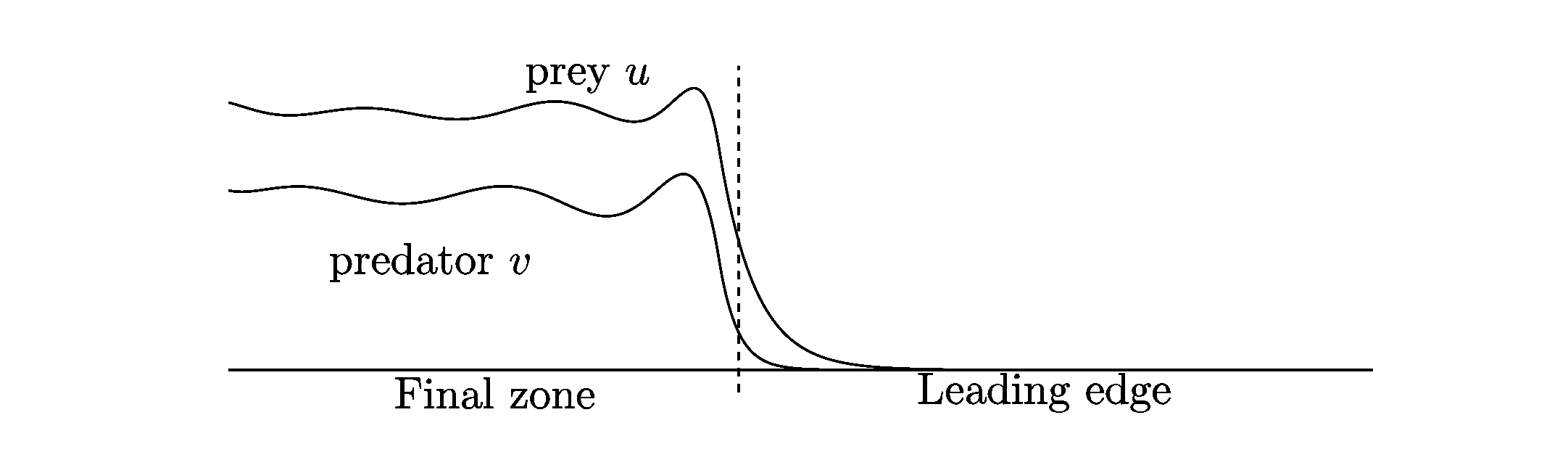}\\
\centerline{Figure 2: Fast predator}
\end{tabular}

\vspace{2ex}

Figures~1 and~2 illustrate the two cases stated in Theorems~\ref{THEO1} and \ref{THEO2}, respectively. In the former case, the prey invades the environment faster than the predator. Therefore, the propagation occurs in two separate steps involving an intermediate equilibrium (namely $u=1$, $v=0$) in between. This multi-front propagation is somewhat reminiscent of the notions of minimal decomposition or propagating terrace for scalar equations~\cite{DGM,FMcL}. On the other hand, in the latter case, the predator's population grows fast enough to always catch up with the prey. Thus both species spread simultaneously, or at least nearly simultaneously.
While there may be some spatial gap between the front of the prey and that of the predator, this gap shall be of order $o(t)$ at most.
Note that the predator cannot spread faster than the prey because it cannot survive without the presence of the prey (see Assumption \ref{ASS-G} \textit{$(b)$} and the subsequent remarks). 

\begin{remark}\label{rmk:KPP0}
Theorems~\ref{THEO1} and~\ref{THEO2} exhaust all the cases. Therefore, if $u_0 \not \equiv 0$ and $v_0 \not \equiv 0$, then propagation of both species occurs. This can be regarded as an analogue of the well-known ``hair-trigger effect" for scalar monostable equation~\cite{AW78}. Moreover, as we see from Theorems~\ref{THEO1} and~\ref{THEO2}, the spreading speed of the prey is always $c^*$, no matter whether the preys are caught up by the predators or not. Note however that the spreading speed being equal to $c^*$ does not mean that the front propagates parallel to the travelling wave of speed~$c^*$. Even in the scalar KPP equation, it is well known that there is a backward phase drift of order $O(\log t)$ from the position $c^* t$. Whether the presence of the predator increases this phase drift or not is yet to be investigated.
\end{remark}
\begin{remark}\label{rmk:KPP}
If we relax the KPP hypothesis on $F$ (Assumption~\ref{ASS-F} \textit{$(b)$} and~\textit{$(c)$}) and assume simply Assumption~\ref{ASS-F} \textit{$(b)$} (monostable nonlinearity), it is well known that the spreading speed $c^*$ for the scalar equation~\eqref{eq-scalar} generally satisfies $c^* \geq 2 \sqrt{d F(0,0)}$ and that this inequality can be strict on certain circumstances. Even in such a situation, as it will be clear from our proof (see Remarks~\ref{rmk:KPP1}, \ref{rmk:KPP2} and \ref{rmk:KPP3} for more details on each zone), Theorem~\ref{THEO1} still holds true as long as $2\sqrt{dF(0,0)} > c^{**}$. This means that any small amount of preys manages to spread outside of the predator's range. On the other hand, in the case $c^* > c^{**} > 2 \sqrt{d F(0,0)}$, more complex dynamics may arise. For example, we suspect that if the initial population of the prey is below a certain threshold, then its density may never become large enough to reach its full nonlinear speed $c^*$, and that the prey may no longer outrace the predator. In such a situation, the propagation of the prey may be slowed down by the effect of predation, making a sharp contrast with what we stated in Remark~\ref{rmk:KPP0}. A similar phenomenon has been observed numerically when the dynamics of the prey population exhibits a (weak or strong) Allee effect (see for instance~\cite{OL}).
%
%
\end{remark}
%

\subsection{Spreading with asymptotics}
In general, how the solutions look in the final zone is not clear, as it depends largely on the dynamics of the underlying system of ordinary differential equations (ODE system for short):
\begin{equation}\label{1.1.ODE}
\begin{split}
&\partial_t u=u F\left(u,v \right),\\
&\partial_t v=v G\left(u,v \right).
\end{split}
\end{equation}
In some parameter range this ODE system has a stable positive equilibrium point, while in other parameter range it may have a limit cycle.
When this ODE system has a globally asymptotically stable equilibrium point $(u^*, v^*)$ in the region $0<u<1$, $v>0$, we expect the solution of the PDE system to converge uniformly in the interior of the final zone to this equilibrium as $t \to +\infty$. We prove this conjecture for the special case where the diffusivity ratio $d$ is equal to $1$, and assuming the existence of a strict Lyapunov function.

\begin{assumption}\label{ass:Lyapunov}
Set $\mathcal O=\{(u,v)\in\R^2:\;0<u<1,\;v>0\}$ and assume that the vector field $\left(F,G\right)$ has a unique singular point $\left(u^*,v^*\right)$ in $\mathcal O$. In other words,
\begin{equation*}
(u,v)\in\mathcal O\text{ and }\left(F,G\right)(u,v)=\left(0,0\right)\;\Rightarrow\;(u,v)=\left(u^*,v^*\right).
\end{equation*}
Assume furthermore that:
\begin{itemize}
\item [$(a)$] There exists a strictly convex function $\Phi:\mathcal O \to \R$ of class $C^2$ that attains its minimum at $\left(u^*,v^*\right)$ and satisfies
\begin{equation}\label{lyap}
(uF(u,v),vG(u,v)) \cdot \nabla \Phi (u,v)\leq 0,\;\forall (u,v)\in\mathcal O.
\end{equation}
\item [$(b)$] The function $\Phi$ is a strict Lyapunov function in the sense that for each $(u_0,v_0)\in \mathcal O$
the following holds true: if $(u,v)$ denotes the solution of \eqref{1.1.ODE} with initial data $(u_0,v_0)$ then
\begin{equation*}
\Phi \left(u(t),v(t)\right)= \Phi \left(u_0,v_0\right),\;\forall t\geq 0 \Rightarrow\;(u_0,v_0)=\left(u^*,v^*\right).
\end{equation*}
\end{itemize}
\end{assumption}
%

Then our result reads as:
\begin{theorem}[Profile of the final zone]\label{THEO.asymptotics}
In addition to Assumptions \ref{ASS-F}, \ref{ASS-G} and \ref{ASS-U}, let the Assumption \ref{ass:Lyapunov} be satisfied.
Let us also assume that $d=1$.
Then for each $c \in [0, \min \{c^{**} ,c^* \})$ one has
\begin{equation}\label{inner-spread}
\lim_{t \to \infty} \sup_{\| x \| \leq ct } \left( \left| u(t,x ) - u^*\right| + \left| v(t,x) - v^*\right| \right) =0,
\end{equation}
where $(u,v)$ is a solution of \eqref{1.1} with nontrivial compactly supported initial data $(u_0,v_0) \in {C}$.
\end{theorem}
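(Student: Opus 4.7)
The plan is to argue by contradiction via a compactness extraction of an entire solution of \eqref{1.1} on $\R\times\R^N$, and then apply a PDE version of LaSalle's invariance principle built around the Lyapunov function $\Phi$, crucially exploiting that $d=1$. Suppose the conclusion fails: then there exist $\delta>0$ and a sequence $(t_n,x_n)$ with $t_n\to\infty$ and $\|x_n\|\leq c\,t_n$ such that $|u(t_n,x_n)-u^*|+|v(t_n,x_n)-v^*|\geq\delta$. Fixing $c'\in(c,\min\{c^*,c^{**}\})$, the translated functions $(u_n,v_n)(t,x):=(u(t_n+t,x_n+x),v(t_n+t,x_n+x))$ satisfy $\|x_n+x\|\leq c'(t_n+t)$ for every fixed $(t,x)$ once $n$ is large, so parts~$(iii)$ of Theorems~\ref{THEO1} and~\ref{THEO2}, combined with Assumption~\ref{ASS-U}, confine them to a compact set $K\subset\mathcal O$ for $n$ large. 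Standard parabolic Schauder estimates then allow me to extract a subsequential $C^{1,2}_{\mathrm{loc}}$ limit $(u_\infty,v_\infty)$ which is a classical entire solution of \eqref{1.1} with values in $K$ and satisfies $(u_\infty(0,0),v_\infty(0,0))\neq(u^*,v^*)$. The theorem thus reduces to showing $(u_\infty,v_\infty)\equiv(u^*,v^*)$.

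To this end, I set $w:=\Phi(u_\infty,v_\infty)$. Because $d=1$, a direct chain-rule computation yields
\begin{equation*}
\partial_t w-\Delta w = \nabla\Phi(u_\infty,v_\infty)\cdot\bigl(u_\infty F(u_\infty,v_\infty),\,v_\infty G(u_\infty,v_\infty)\bigr) - \sum_{i=1}^N D^2\Phi\bigl[(\partial_{x_i}u_\infty,\partial_{x_i}v_\infty)\bigr],
\end{equation*}
in which the first term is $\leq 0$ by \eqref{lyap} and the second term is $\geq 0$ by convexity of $\Phi$. Hence $w$ is a bounded classical subsolution of the heat equation on $\R\times\R^N$, with values in $[\Phi(u^*,v^*),\max_K\Phi]$.

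The next step is a Liouville-type argument forcing $w$ to be identically $\Phi(u^*,v^*)$. Let $M:=\sup w$ and choose $(s_n,y_n)$ with $w(s_n,y_n)\to M$; by repeating the translation-and-compactness procedure I extract a further entire limit $(\tilde u,\tilde v)$ in $K$ for which $\tilde w:=\Phi(\tilde u,\tilde v)\leq M$ everywhere and $\tilde w(0,0)=M$. The parabolic strong maximum principle applied to the heat-subsolution $\tilde w$ then yields $\tilde w\equiv M$ on $\R^N\times(-\infty,0]$, so $\partial_t\tilde w=\Delta\tilde w=0$ on that half-space. The Lyapunov identity now requires both of its sign-definite contributions to vanish separately: strict convexity of $\Phi$ forces $\nabla\tilde u\equiv\nabla\tilde v\equiv 0$, and $\nabla\Phi\cdot(\tilde u F,\tilde v G)\equiv 0$. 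Therefore $(\tilde u,\tilde v)$ reduces on $(-\infty,0]$ to a spatially constant trajectory of the ODE system \eqref{1.1.ODE} along which $\Phi$ is constant, which by Assumption~\ref{ass:Lyapunov}$(b)$ must be $(u^*,v^*)$; uniqueness for the Cauchy problem for \eqref{1.1} propagates this equality to all $t\in\R$. Hence $M=\Phi(u^*,v^*)=\min_{\mathcal O}\Phi$, so $w\equiv\Phi(u^*,v^*)$ and $(u_\infty,v_\infty)\equiv(u^*,v^*)$, contradicting the lower bound at the origin.

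I expect the main obstacle to be this Liouville step: a bounded entire subsolution of the heat equation need neither attain its supremum nor be constant, so one has to convert boundedness into an actual attained maximum through the second translation, and then use the structure of the full system to run the strong maximum principle backwards in time and unfreeze the forward evolution by Cauchy uniqueness. A secondary but essential point is the restriction $d=1$: for $d\neq 1$ the chain-rule identity above acquires an extra term of the form $(d-1)\Phi_u\Delta u_\infty$ of indefinite sign, which destroys the heat-subsolution structure; extending the theorem beyond the equal-diffusion case would require a genuinely different idea.
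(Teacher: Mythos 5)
Your proposal is correct and follows essentially the same route as the paper's proof: the same contradiction--translation--compactness reduction to an entire solution confined in a compact subset of $\mathcal O$, the same observation (using $d=1$) that $w=\Phi(u_\infty,v_\infty)$ is a bounded subsolution of the heat equation, and the same second translation, strong maximum principle, strict convexity and strict Lyapunov steps as in the paper's proof of Claim~\ref{claim-popol}. The only point to tighten is your appeal to Assumption~\ref{ass:Lyapunov}~\textit{$(b)$}, which is stated for forward orbits on $[0,\infty)$ while your $\Phi$-constancy holds only on $(-\infty,0]$; this is repaired in two lines by taking $t_n\to-\infty$ along the (compactly confined) spatially constant trajectory and applying \textit{$(b)$} at a limit point of $(\tilde u,\tilde v)(t_n)$ --- a gloss the paper's own proof also needs, since the strong maximum principle by itself yields constancy of the limiting Lyapunov quantity only for $t\leq 0$.
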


Note that in population dynamics, many models admit a Lyapunov function, especially in epidemic models and prey-predator systems. We refer for instance to \cite{CHL} and the references therein for examples of Lyapunov functions in the context of prey-predator systems. In Section~\ref{sec:asymp} we will give two such examples
of prey-predator systems to which the above theorem applies.

Apart from these special cases where \eqref{1.1.ODE} possesses a nice Lyapunov function as in Assumption~\ref{ass:Lyapunov}, the question of finding the behaviour of solutions in the final zone is largely open. One interesting situation is when \eqref{1.1.ODE} has a stable limit cycle. Whether the solution is asymptotically periodic in time in the final zone or not will be the subject of future work.

\subsection{Outline of the paper}\label{sec:outline}

The proof of our main spreading results, namely Theorems~\ref{THEO1} and~\ref{THEO2}, will be performed through several sections dealing with each separate zone shown in Figures~1 and~2. We note that the analysis of the leading edge and the intermediate zone (if it exists) is rather straightforward, since virtually no interaction between the two species occurs in these zones. What is most difficult to analyse is the behaviour of solutions in the final zone, where the two species heavily interact with each other. As we explain later, a large part of the present paper is devoted to the analysis of this final zone.

First, we will show that the prey $u$ cannot propagate faster than the speed~$c^*$, and that the predator cannot propagate faster than the speed $\min \{ c^*,c^{**} \}$. This follows from a simple comparison argument and will be proved in Section~\ref{sec:outer}. Next, we consider the slow predator case (Theorem~\ref{THEO1} \textit{$(ii)$}), in which the intermediate zone appears as shown in Figure~1. Here we need to show that $u$ converges to 1 uniformly in this zone as $t \to +\infty$. As $v$ remains close to~0 in this zone as shown in Section~\ref{sec:outer}, the function $u$ roughly behaves like a solution of the monostable equation \eqref{eq-scalar}; hence one may expect that $u$ converges to the stable state $u=1$ of \eqref{eq-scalar} as $t \to +\infty$. However, as the intermediate zone is moving and expanding, one has to construct suitable sub-solutions carefully. This will be discussed in Section~\ref{sec:interm}.

We will then end the proof of Theorems~\ref{THEO1} and~\ref{THEO2} by investigating the behaviour of the solution in the final zone, which as mentioned above requires the most delicate analysis of all the three zones. This will be the purpose of Section~\ref{sec:inner}. The key point is to prove that $v$ remains uniformly positive in this zone, namely on the ball of growing radius $ct$ for any given $0 \leq c  < \min \{ c^* ,c^{**} \}$. The proof will be divided into three steps. First, we show what we call the ``pointwise weak spreading", which simply states that $v$ does not converge to 0 uniformly as $t \to +\infty$ on the expanding sphere $\| x\|=ct$. The second step consists in proving what we call the ``pointwise spreading". This means that for each direction $e \in S^{N-1}$, $v$ is bounded from below as $t \to +\infty$ along the path $x=cte$ by some constant $\varepsilon >0$, independent of the direction $e \in S^{N-1}$. This crucial step is inspired by some ideas of the dynamical systems theory and more specifically of the uniform persistence theory. We finally conclude the proof by showing that $v$ is asymptotically uniformly positive on the whole ball of growing radius $ct$.

The last two sections will deal with some further results. In Section~\ref{sec:dissip}, we will provide almost optimal conditions for the Assumption~\ref{ASS-U} on weak dissipativity to be satisfied. Lastly, in Section~\ref{sec:asymp}, we study the long time behaviour of solutions in the final zone under the additional Assumption~\ref{ass:Lyapunov} and prove Theorem~\ref{THEO.asymptotics}.

\section{Upper estimates on the spreading speeds}\label{sec:outer}

This section is mainly concerned with the analysis of the leading edge (statements~\textit{$(i)$} of Theorems~\ref{THEO1} and~\ref{THEO2}) and part of the statement~\textit{$(ii)$} of Theorem~\ref{THEO1}. We begin by recalling, for the sake of completeness, the classical spreading result for monostable scalar equations from Aronson and Weinberger~\cite{AW78}:
\begin{proposition}[Spreading for the scalar monostable equation~\cite{AW78}]\label{LE1} \hspace{1mm} 

\noindent Consider the so-called monostable equation
\begin{equation}\label{eqn:monostable}
\left(\partial_t -d\Delta\right)u(t,x)=u(t,x) f(u(t,x)),\;t>0,\;x\in\R^N,
\end{equation}
wherein $f$ a $C^1$ function satisfying
$$f(1) = 0 \mbox{ and } f(u) >0 \mbox{ for all } u \in [0,1).$$
Then there exists $c^* (d,f)$ satisfying $c^* (d,f) \geq 2 \sqrt{df(0)}$ such that, for any nontrivial compactly supported and continuous function~$0 \leq \varphi \leq 1$, the solution $u\equiv u(t,x;\varphi)$ of \eqref{eqn:monostable} with initial data $\varphi$ satisfies:
\begin{equation*}
\lim_{t\to\infty} \sup_{\|x\|\geq ct} u(t,x;\varphi)=0,\;\;\forall c>c^*(d,f),
\end{equation*}
and
\begin{equation*}
\lim_{t\to\infty} \sup_{\|x\|\leq ct} \left|1-u(t,x;\varphi)\right|=0,\;\;\forall c\in \left(0,c^* (d,f)\right).
\end{equation*}
Furthermore, $c^* (d,f)$ coincides with the minimal speed of travelling waves connecting 0 to 1 of the one-dimensional equation $(\partial_t - d \partial_x^2)u = uf(u)$.

If in addition $f(u) \leq f(0)$ for all $u \in [0,1]$, then $c^*(d,f) = 2 \sqrt{df(0)}$.
\end{proposition}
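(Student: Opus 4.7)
The plan is to follow the classical Aronson--Weinberger strategy \cite{AW78}, combining phase-plane analysis of travelling waves with parabolic comparison arguments. The existence of the minimal speed $c^*(d,f)$ and its characterisation as such would be obtained by studying the ODE $d\phi'' + c\phi' + \phi f(\phi)=0$ with $\phi(-\infty)=1$, $\phi(+\infty)=0$: a standard phase-plane argument yields a threshold $c^*(d,f)$ such that monotone heteroclinic connections exist precisely for $c \geq c^*(d,f)$. The requirement that the characteristic roots of the linearisation at $u=0$ be real (to keep $\phi$ nonnegative near $+\infty$) forces $c^*(d,f) \geq 2\sqrt{df(0)}$.

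For the upper bound on propagation, I would use these travelling waves as super-solutions. Given $c > c^*(d,f)$, a planar wave $\phi_c(x \cdot e - ct - a)$ dominates the compactly supported initial datum $\varphi$ once the shift $a$ is taken large enough; intersecting over all unit directions $e \in S^{N-1}$ yields a super-solution of the form $\phi_c(\|x\| - ct - a)$ (after absorbing the negative curvature correction into a further shift), and the comparison principle implies $\sup_{\|x\| \geq ct} u(t,x;\varphi) \to 0$.

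The invasion statement $\sup_{\|x\| \leq ct}|1-u(t,x;\varphi)| \to 0$ for $c < c^*(d,f)$ requires two ingredients. The first is the \emph{hair-trigger effect}: any nontrivial nonnegative bounded solution of \eqref{eqn:monostable} converges to $1$ locally uniformly in $x$ as $t \to \infty$. This follows from the strict positivity of $f$ on $(0,1)$, the strong maximum principle, and a standard $\omega$-limit compactness argument on the parabolic semiflow. The second ingredient is a compactly supported sub-solution propagating at any prescribed subcritical speed. For each $c < c^*(d,f)$ and sufficiently large ball $B_R$ on which the principal Dirichlet eigenvalue of the linearised operator is positive, one constructs a compactly supported stationary sub-solution of a truncated problem, which after translation at speed $c$ yields a compactly supported travelling sub-solution. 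By comparison, $u(t,\cdot;\varphi)$ remains above translates of this sub-solution; the hair-trigger effect applied in an expanding frame moving at speed $c$ then produces the desired invasion.

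Finally, when $f(u) \leq f(0)$ for all $u \in [0,1]$, the inequality $u f(u) \leq f(0) u$ and the parabolic comparison principle give $u(t,x;\varphi) \leq v(t,x)$, where $v$ solves the linear equation $v_t = d\Delta v + f(0) v$ with initial datum $\varphi$. The explicit representation by the heat kernel yields Gaussian decay of $v$ on $\|x\| \geq ct$ as soon as $c > 2\sqrt{df(0)}$, so $c^*(d,f) \leq 2\sqrt{df(0)}$, and combined with the opposite inequality obtained in the first step we recover equality. The main technical obstacle is the construction of the compactly supported sub-solution propagating arbitrarily close to $c^*(d,f)$, together with the proof of the hair-trigger effect; both are carried out in detail in \cite{AW78} and the proposition is stated here only for reference, so one can legitimately import them as a black box.
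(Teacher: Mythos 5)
Your proposal is correct and consistent with the paper's treatment: the paper states this proposition without proof, simply citing Aronson--Weinberger \cite{AW78}, and your sketch is exactly the classical argument of that reference (phase-plane analysis giving the minimal wave speed and the bound $c^*(d,f)\geq 2\sqrt{df(0)}$, planar travelling waves as super-solutions for the upper bound, the hair-trigger effect combined with compactly supported sub-solutions moving at subcritical speeds for the invasion statement, and linear comparison with $v_t=d\Delta v+f(0)v$ in the KPP case), with the genuinely technical steps legitimately imported as a black box. The only imprecision is that a truncated stationary sub-solution translated at speed $c$ is not literally a sub-solution of the parabolic equation for $c$ close to $c^*(d,f)$; the correct object is the phase-plane profile $q_c$ of \cite{AW78} (their Lemma 4.3, recalled as Lemma~\ref{lem:AW} in the paper), which is precisely the construction you defer to \cite{AW78}, so this does not affect the overall argument.
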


This result directly applies to the equation~\eqref{eq-scalar}, hence we can refer to $c^*$ defined by \eqref{speed_prey} as the spreading speed of solutions of \eqref{eq-scalar}. The argument in~\cite{AW78} also applies to the equation~\eqref{eq-scalar-bis}, except that convergence to 1 in the final zone (namely in any ball $\| x \| \leq ct$ with $c < c^{**}$) must be replaced by uniform positivity.

\begin{remark}\label{rmk:mono+}
Let us recall the notion of travelling waves for equations of the form~\eqref{eqn:monostable} or its one-dimensional version
\begin{equation}\label{eq:mono+}
(\partial_t -  d \partial_x^2) u = u f(u) , \; t>0 , \; x \in \R.
\end{equation}
A solution~$u$ of~\eqref{eq:mono+} is called a travelling wave connecting $0$ to $p$, where $p$ is a positive zero of $f$, if it is written in the form $u(t,x) = U (x-ct)$ with some constant~$c$ and a function~$U(z)$ satisfying $U(-\infty)=p$, $U(+\infty)=0$, $0< U (z) < p$ ($z \in \mathbb{R}$). The constant~$c$ is called the speed of this travelling wave, and $U$ its profile function. As it is easily seen, $U$ satisfies
$$d U'' + cU' + Uf(U) = 0.$$
In the multidimensional equation~\eqref{eqn:monostable}, for any unit vector~$\nu \in \R^N$, the function of the form $U(x\cdot \nu - ct)$, where $U(z)$ is the above profile function of a one-dimensional travelling wave, is a solution of~\eqref{eqn:monostable} (where $f(p)=0$), and is called a planar wave solution. It is known that if~$f$ is of the monostable type, then there exists the minimal travelling wave speed (see~\cite{AW78}).\end{remark}
\begin{remark}\label{rmk:KPP1}
Here we recalled the spreading property for the general monostable equation. The reason is that the arguments in this section, as well as those in Sections~\ref{sec:interm} and~\ref{sec:inner}, remain valid if one assumes equation~\eqref{eq-scalar} to be monostable, instead of the KPP hypothesis \textit{$(c)$} of Assumption~\ref{ASS-F}. In that case, $c^*$ is understood to denote the spreading speed of solutions of \eqref{eq-scalar} (for compactly supported initial data). Hence in general we have $c^* \geq 2\sqrt{dF(0,0)}$ instead of~\eqref{speed_prey}.
\end{remark}

We now begin the proof of Theorems~\ref{THEO1} and~\ref{THEO2}, with some simple upper estimates on the spreading speeds. Let $\overline{u}$ be the solution of 
$$\partial_t \overline{u} = d \Delta \overline{u} + \overline{u} F(\overline{u},0),$$
associated with the same initial data $u_0$ as given in the assumptions of Theorems~\ref{THEO1} and~\ref{THEO2}. Because $v \geq 0$ and $F(u,v)$ is nonincreasing with respect to $v$, we immediately get that $\overline{u}$ is a super-solution for the $u$-equation in system~\eqref{1.1}. Applying the comparison principle and thanks to Proposition~\ref{LE1}, one gets that
$$\forall c > c^*, \ \lim_{t\to +\infty} \sup_{\| x\| \geq ct} u (t,x)  \leq\lim_{t\to +\infty} \sup_{\| x\| \geq ct} \overline{u} (t,x) =0 .$$
Since $u$ is nonnegative, this already proves statement~\textit{$(i)$} in Theorem~\ref{THEO1}, as well as half of statement~\textit{$(i)$} in Theorem~\ref{THEO2}.\\

Furthermore, we know that $u \leq 1$, so that from Assumption~\ref{ASS-G}, the function
\begin{equation}\label{sursol_11}
\overline{v}_{1,e} := A e^{-\sqrt{G(1,0)} (x \cdot e -c^{**} t)},
\end{equation}
satisfies, for any $e \in S^{N-1}$ and $A >0$:
\begin{eqnarray*}
&& \partial_t \overline{v}_{1,e} - \Delta \overline{v}_{1,e} - \overline{v}_{1,e} G (u, \overline{v}_{1,e})\\
& \geq & \partial_t \overline{v}_{1,e} - \Delta \overline{v}_ {1,e} - \overline{v}_{1,e} G (1, 0)\\
& = & (c^{**} \sqrt{G(1,0)} - 2G(1,0)) \times \overline{v}_{1,e}\\
& = & 0,
\end{eqnarray*}
where we have used \eqref{speed_predator} defining the value $c^{**}$.

Since $v_0$ is compactly supported and bounded, one can choose $A$ large enough so that for any $e \in S^{N-1}$, the inequality $v_0 \leq \overline{v}_{1,e}$ holds. We can then conclude from the parabolic comparison principle that
$$\forall c > c^{**}, \ \lim_{t \to +\infty} \sup_{\| x \| \geq ct } v (t,x) \leq \lim_{t \to +\infty} \; \sup_{\| x \| \geq ct } \; \inf_{e \in S^{N-1}}  \overline{v}_{1,e} (t,x)= 0.$$
The function $v$ being nonnegative, half of statement~\textit{$(ii)$} in Theorem~\ref{THEO1} is now proved.\\

Now, let us show that $v$ cannot spread faster than $c^*$ either. This stems from the fact that $G(0,0) < 0$, which means that $v$ cannot spread by itself outside of the range of $u$. In particular, for any $c >c^*$, there is some $\eta >0$ so that $$2\sqrt{G(\eta,0)} < c.$$ There then exists some $0< \lambda < \sqrt{G(\eta,0)}$ such that
$$\lambda^2 - c \lambda + G(\eta,0) =0.$$
From the above upper estimate on the spreading of $u$, there also exists some $t_1 >0$ such that, for any $t \geq t_1$,
$$\sup_{t \geq t_1} \sup_{ \| x \| \geq ct } u (t,x) \leq \eta .$$
Then, for any constant $B >0$, the function
$$\overline{v}_{2,e}  := B e^{-\lambda (x \cdot e - ct)}$$
satisfies
\begin{eqnarray*}
&&  \partial_t \overline{v}_{2,e} - \Delta \overline{v}_ {2,e} - \overline{v}_{2,e} G (\eta, 0)\\
& = & (c\lambda - \lambda^2 - G(\eta,0)) \times \overline{v}_{2,e}\\
& = & 0,
\end{eqnarray*}
thus it is a super-solution of the $v$-equation in system~\eqref{1.1} for any $t \geq t_1$ and $\|x \| \geq ct $. 

Moreover, on the one hand, thanks to Assumption~\ref{ASS-U} on the weak dissipativity, there exists~$B$ large enough so that 
$$v(t,x) \leq B \leq \overline{v}_{2,e} (t,x),$$
for any $e \in S^{N-1}$, $t \geq t_1$ and $\|x\| = ct$. 

On the other hand, since $\lambda < \sqrt{G(\eta,0)} < \sqrt{G(1,0)}$, by choosing~$B$ large enough, we may also assume that
$$v(t_1,x) \leq \overline{v}_{1,e} (t_1,x) \leq \overline{v}_{2,e} (t_1,x),$$
for any $e \in S^{N-1}$ and all $x \in \R^N$. We can now apply once again the comparison principle to conclude that
$$\forall c' > c, \ \lim_{t \to +\infty} \sup_{\| x \| \geq c' t } v(t,x) =0.$$
Since $c$ can be chosen arbitrarily close to $c^*$, this implies as announced that~$v$ does not spread faster than the speed $c^*$, which completes the proof of statement~\textit{$(i)$} of Theorem~\ref{THEO2}.

\section{Spreading of $u$ outside of the predator's range}\label{sec:interm}

In this section, we complete the proof of statement~\textit{$(ii)$} in Theorem~\ref{THEO1}, which concerns the intermediate zone. To do so, we will assume the following instead of assuming $c^* > c^{**}$:
\begin{equation}\label{eqn:inq_speeds}
2\sqrt{d F(0,0)} > c^{**} = 2\sqrt{G(1,0)}.
\end{equation}
\begin{remark}\label{rmk:KPP2}
Under the KPP hypothesis on $F$ (see Assumption~\ref{ASS-F} \textit{$(c)$}), the inequality \eqref{eqn:inq_speeds} reads as $c^* > c^{**}$. However, as we mentioned in Remark~\ref{rmk:KPP}, the KPP assumption is actually not needed here. Instead, we will only assume throughout this section that \eqref{eq-scalar} is of the monostable type (namely $F(u,0) >0$ for all $0 \leq u <1$) and that the inequality~\eqref{eqn:inq_speeds} above holds.
\end{remark}

As we have already shown that $v$ converges to 0 outside of the ball $\| x\| \leq ct$ for any $c > c^{**}$, it only remains to check that for any $c^{**} < c_1 < c_2 < 2\sqrt{d F(0,0)}$, one has
$$\lim_{t\to \infty} \sup_{c_1 t \leq \| x \| \leq c_2 t } 1 - u(t,x) = 0.$$
Note that this means $\limsup |1-u| =0$ since $1-u \geq 0$.

In the following, we fix any such $c_1$ and $c_2$. The main idea is to use the fact that~$v$ tends to 0 on this part of the domain or, in other words, that those moving frames are outside of the predator's range. Therefore, we expect $u$ to behave like a solution of \eqref{eq-scalar}.\\

Before we begin the proof, let us introduce the following equation, where $\varepsilon$ is a positive parameter:
\begin{equation}\label{eq-scalar-eps}
\partial_t u = d \Delta u + u F(u,\varepsilon).
\end{equation}
Provided that $\varepsilon$ is small enough, this equation is of the standard monostable type, just as \eqref{eq-scalar}, in the sense of~\eqref{eqn:monostable_def} below. Thus the following lemma holds:
\begin{lemma}\label{lem:eq-scalar-eps}
For any small enough $\varepsilon >0$, there exist some constants $0<p_\varepsilon \leq 1$ and $c_\varepsilon >0$ such that 
\begin{equation}\label{eqn:monostable_def}
F(p_\varepsilon,\varepsilon) = 0, \ \mbox{ and } \ F(u,\varepsilon) > 0 \ \mbox{ for any } \ 0 \leq u < p_\varepsilon,
\end{equation}
and
\begin{itemize}
\item [$(i)$] equation \eqref{eq-scalar-eps} admits travelling waves connecting 0 to $p_\varepsilon$ with speed $c$ if and only if $c \geq c_\varepsilon$ (see Remark~\ref{rmk:mono+});
\item [$(ii)$] for any nontrivial, compactly supported and continuous initial data $0 \leq \varphi \leq p_\varepsilon$, the associated solution $u (t,x)$ of \eqref{eq-scalar-eps} spreads with speed $c_\varepsilon$ in the following sense:
\begin{equation*}
\lim_{t\to\infty} \sup_{\|x\|\geq ct} u(t,x)=0,\;\;\forall c>c_\varepsilon,
\end{equation*}
and
\begin{equation*}
\lim_{t\to\infty} \sup_{\|x\|\leq ct} \left|p_\varepsilon -u(t,x)\right|=0,\;\;\forall c\in \left(0,c_\varepsilon \right).
\end{equation*}
\end{itemize}
Furthermore, $c_\varepsilon$ and $p_\varepsilon$ are nonincreasing with respect to $\varepsilon$ and tend respectively to $c^*$ and 1 as $\varepsilon \to 0$.
\end{lemma}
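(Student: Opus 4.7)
The plan is to reduce equation~\eqref{eq-scalar-eps} to a standard scalar monostable problem for each small $\varepsilon > 0$ and then invoke Proposition~\ref{LE1}. First I would construct $p_\varepsilon$. Since $F(0,0) > 0$, $F(1,0) = 0$, and $F$ is continuous with $F(u, \cdot)$ strictly decreasing, for $\varepsilon$ small enough one has $F(0, \varepsilon) > 0 > F(1, \varepsilon)$, so
$$p_\varepsilon := \inf\{u \in [0, 1] : F(u, \varepsilon) \leq 0\}$$
is a smallest zero in $(0, 1)$ and satisfies $F(u, \varepsilon) > 0$ on $[0, p_\varepsilon)$. After the trivial rescaling $\tilde u = u / p_\varepsilon$, equation~\eqref{eq-scalar-eps} becomes a standard monostable problem on $[0, 1]$, so Proposition~\ref{LE1} yields a minimal wave speed $c_\varepsilon > 0$ satisfying the spreading statement~\textit{$(ii)$} and, by the classical monostable theory, the existence of travelling waves at every speed $c \geq c_\varepsilon$ (see also Remark~\ref{rmk:mono+}).

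To establish the monotonicity, I would take $0 < \varepsilon_1 < \varepsilon_2$. The strict decrease of $F(u, \cdot)$ gives $F(p_{\varepsilon_1}, \varepsilon_2) < F(p_{\varepsilon_1}, \varepsilon_1) = 0$, forcing $p_{\varepsilon_2} \leq p_{\varepsilon_1}$ by minimality. For the wave speeds, I would fix any nontrivial compactly supported $\varphi$ with $0 \leq \varphi \leq p_{\varepsilon_2}$, and let $u_i$ denote the solution of~\eqref{eq-scalar-eps} with parameter $\varepsilon_i$ and initial datum $\varphi$. Since $F(\cdot, \varepsilon_1) \geq F(\cdot, \varepsilon_2)$ and $u_1 \geq 0$, the function $u_1$ is a super-solution of the $\varepsilon_2$-equation, so $u_1 \geq u_2$ by parabolic comparison. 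If one had $c_{\varepsilon_1} < c_{\varepsilon_2}$, then for any $c \in (c_{\varepsilon_1}, c_{\varepsilon_2})$ and any $x_0$ with $\|x_0\| = ct$, the already-established spreading statement~\textit{$(ii)$} applied to each $u_i$ would yield $u_1(t, x_0) \to 0$ and $u_2(t, x_0) \to p_{\varepsilon_2} > 0$ as $t \to \infty$, contradicting $u_1 \geq u_2$. Hence $c_{\varepsilon_1} \geq c_{\varepsilon_2}$.

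For the limits as $\varepsilon \to 0^+$, the monotone families $p_\varepsilon$ and $c_\varepsilon$ admit limits $p_0 \leq 1$ and $c_0 \leq c^*$; the upper bound on $c_0$ comes from running the same super-solution argument against the $\varepsilon = 0$ equation~\eqref{eq-scalar}, whose solutions spread at speed $c^*$ by Proposition~\ref{LE1}. Continuity of $F$ gives $F(p_0, 0) = 0$, and Assumption~\ref{ASS-F}\textit{$(b)$} then forces $p_0 = 1$. For the speeds, the classical linear lower bound for monostable equations, obtained by linearizing $g_\varepsilon(u) := u F(u, \varepsilon)$ at $0$, yields $c_\varepsilon \geq 2\sqrt{d\, g_\varepsilon'(0)} = 2\sqrt{d\, F(0, \varepsilon)}$, which tends to $2\sqrt{d F(0, 0)} = c^*$ thanks to the KPP hypothesis~\textit{$(c)$} of Assumption~\ref{ASS-F}. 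Combined with $c_\varepsilon \leq c^*$, this yields $c_\varepsilon \to c^*$.

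The main point requiring care is the linear lower bound $c_\varepsilon \geq 2\sqrt{d\, g_\varepsilon'(0)}$: it is classical for monostable equations, but must be invoked or quickly re-derived since the full KPP inequality $F(u, \varepsilon) \leq F(0, \varepsilon)$ need not hold for $\varepsilon > 0$. All other ingredients are routine applications of parabolic comparison and of the continuity and strict monotonicity of $F$ in its second argument.
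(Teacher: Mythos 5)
Your proposal is correct under Assumption~\ref{ASS-F} as stated, and it follows the paper's proof for the construction of $p_\varepsilon$, the monotonicity of $p_\varepsilon$ and $c_\varepsilon$, and the identification $p_0=1$. The one place where you take a genuinely different route is the key limit $c_\varepsilon \to c^*$: you invoke the classical Aronson--Weinberger linear lower bound $c_\varepsilon \geq 2\sqrt{d\,F(0,\varepsilon)}$ together with the KPP identity $c^* = 2\sqrt{d\,F(0,0)}$, whereas the paper avoids the linear bound altogether and instead passes to the limit in the minimal-speed travelling wave profiles $U_\varepsilon$ (normalised by $U_\varepsilon(0)=p_\varepsilon/2$), obtaining in the limit a profile $U_0$ connecting $1$ to $0$ at speed $c_0$, and concluding $c_0 \geq c^*$ from the minimality of $c^*$. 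Your argument is shorter and self-contained, but it genuinely uses the KPP hypothesis (c) of Assumption~\ref{ASS-F}, since without it one only knows $c^* \geq 2\sqrt{dF(0,0)}$ and the squeeze fails. The paper's more elaborate compactness argument is chosen precisely so that Lemma~\ref{lem:eq-scalar-eps} (and hence the whole of Section~\ref{sec:interm}) remains valid when the KPP hypothesis is dropped and only monostability of~\eqref{eq-scalar} is assumed, which is the generalisation announced in Remarks~\ref{rmk:KPP1} and~\ref{rmk:KPP2}. If you only care about the KPP setting, your proof is a clean alternative; if you want the result at the full generality used elsewhere in the paper, the wave-profile compactness argument is needed.
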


Although its statement may seem a bit intricate, this lemma simply says that the dynamics of equation~\eqref{eq-scalar-eps} is close to that of equation~\eqref{eq-scalar}.\\

\begin{proof} Define
$$p_\varepsilon := \min \{ u \geq 0 \; | \ F(u,\varepsilon) \leq 0 \}.$$
From the monotonicity of $F$ with respect to~$v$, it is clear that $p_\varepsilon \leq 1$ is well defined. Moreover, since $F$ is regular and satisfies $F(0,0)>0$, we have 
$$p_\varepsilon >0, \ F(p_\varepsilon , \varepsilon) =0, \ \mbox{and } F(u,\varepsilon)>0 \mbox{ for any } 0 \leq u < p_\varepsilon,$$
provided that~$\varepsilon$ is small enough.

In other words, \eqref{eq-scalar-eps} is of the monostable type in the range $[0,p_\varepsilon]$ with $0$, $p_\varepsilon$ being the equilibrium states. Let $c_\varepsilon >0$ denote the spreading speed for solutions of~\eqref{eq-scalar-eps} with nontrivial compactly supported initial data (see Proposition~\ref{LE1}).

From the fact that $F$ is nonincreasing with respect to~$v$, one can easily check that $c_\varepsilon$ and $p_\varepsilon$ are nonincreasing with respect to $\varepsilon$ and that they are bounded from above by respectively $c^*$ and $1$. In particular, $c_\varepsilon$ and $p_\varepsilon$ converge respectively to $c_0 \leq c^*$ and $p_0 \leq 1$. From the regularity of $F$, one has that $F(p_0, 0)=0$, thus $p_0 =1$. 

It only remains to check that $c_0 = c^*$. As mentioned before (see Proposition~\ref{LE1} and Remark~\ref{rmk:mono+}), there exists some travelling wave solution of equation~\eqref{eq-scalar-eps} with speed $c_\varepsilon$ satisfying $U_\varepsilon (z) \in (0,p_\varepsilon) $ and
$$d U_\varepsilon '' + c_\varepsilon U_\varepsilon ' + U_\varepsilon  F (U_\varepsilon ,\varepsilon) =0,$$
along with the limiting conditions
$$U_\varepsilon (-\infty) = p_\varepsilon ,\  \mbox{ and } \ U_\varepsilon (+\infty) =0.$$
This travelling wave is also known to be monotonically decreasing~\cite{AW78}, and up to some shift, it can be assumed to satisfy $U_\varepsilon (0) = \frac{p_\varepsilon}{2}$.

The sequence $U_\varepsilon$ is uniformly bounded, hence by standard elliptic estimates, it converges locally uniformly (and in fact, strongly in $C^1_{loc} (\R)$ and weakly in $W^{2,p}_{loc} (\R)$ for any $p \geq 1$), up to extraction of a subsequence as $\varepsilon \to 0$, to some nonincreasing function $U_0 (z) \in (0,1)$ solution of
$$d U_0 '' + c_0 U_0 ' + U_0  F (U_0 ,0) =0,$$
satisfying also $U_0 (0) = 1/2$. It is clear that the following limits exist
$$1 \geq U_+ := \lim_{z \to -\infty} U_0 (z) \geq \frac{1}{2} \geq U_- := \lim_{z \to +\infty} U_0 (z) \geq 0.$$
Moreover, $U_+$ and $U_-$ have to be steady states, that is
$$U_\pm F (U_\pm ,0) =0.$$
The only possibility is that $U_+ = 1 $ and $U_- = 0$. This means that $U_0$ is a travelling wave solution of the monostable equation~\eqref{eq-scalar} with speed $c_0$. Using again the classical result in~\cite{AW78}, the spreading speed $c^*$ which we defined in Proposition~\ref{LE1} is also the minimal speed of travelling wave solutions of~\eqref{eq-scalar} connecting 0 to 1. This now implies that $c_0 \geq c^*$, hence $c_0 = c^*$. This concludes the proof of Lemma~\ref{lem:eq-scalar-eps}.
\end{proof}\\

Our goal is now to compare solutions of \eqref{1.1} with solutions of \eqref{eq-scalar-eps}, using our upper spreading estimate on the $v$-component. The proof consists of two main steps. In the first part, we show that $u$ is always able to reach some favourable region, outside of the predator's range, where it is able to grow close to the positive steady state~$1$. In the second part, we prove that $u$ spreads from this favourable region at speed $c^*$ in the outward direction, using the classical construction of sub-solutions for the monostable equation \eqref{eq-scalar-eps}. 

In this scheme, the two steps are important because, in the general monostable (non-KPP) case, such sub-solutions cannot be chosen arbitrarily small.

\begin{lemma}\label{spread_lem1}
For any $c \in \left(c^{**},2\sqrt{dF(0,0)}\right)$, the solution $(u,v)$ of \eqref{1.1} satisfies
$$\lim_{t \to +\infty}   u (t,x+cte)  =1,$$
where the convergence holds locally uniformly with respect to~$x$ and uniformly with respect to $e \in S^{N-1}$.
\end{lemma}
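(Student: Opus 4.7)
The plan. My strategy exploits the fact that, for any speed strictly greater than $c^{**}$, the upper estimate from Section~\ref{sec:outer} forces the predator density $v$ to decay to zero on the corresponding moving frames. Consequently, in the outer moving region the $u$-equation asymptotically decouples and behaves like the scalar monostable equation~\eqref{eq-scalar-eps} for a small parameter $\varepsilon$. Fix $c^{**}<c'<c$ and choose $\varepsilon>0$ so small that, by Lemma~\ref{lem:eq-scalar-eps}, the spreading speed of~\eqref{eq-scalar-eps} satisfies $c_\varepsilon>c$ and its positive equilibrium $p_\varepsilon$ is arbitrarily close to $1$. Then by Section~\ref{sec:outer} there exists $T_0$ with $v(t,x)\leq \varepsilon$ for all $t\geq T_0$ and $\|x\|\geq c't$, so by monotonicity of $F$ in its second variable,
$$\partial_t u - d\Delta u \;\geq\; u\,F(u,\varepsilon) \quad \text{on } \{t\geq T_0,\ \|x\|\geq c't\},$$
that is, $u$ is a super-solution of \eqref{eq-scalar-eps} on this moving outer region.

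Step~1 (reaching a favourable plateau). I first aim to produce $\eta>0$, $c_0\in(c',c)$ and $T_1\geq T_0$ such that $u(t,c_0 t e)\geq \eta$ for all $t\geq T_1$ and every $e\in S^{N-1}$. Since the strong maximum principle gives $u(T_0,\cdot)>0$ on the whole of $\R^N$, we have $u(T_0,\cdot)\geq \eta_0$ on any fixed ball, with $\eta_0>0$ independent of $e$. I would then construct a compactly supported scalar sub-solution $\phi \leq \min(\eta_0,p_\varepsilon)$ for \eqref{eq-scalar-eps} placed inside the valid outer region at a sufficiently large initial time, and invoke Lemma~\ref{lem:eq-scalar-eps}$(ii)$ to spread this bump at speed $c_\varepsilon$. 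The corresponding scalar solution $\underline{w}$ satisfies $\underline{w}(t,\cdot)\geq p_\varepsilon-o(1)$ on a moving ball of speed $c_0 \in (c',c_\varepsilon)$ which, \emph{after transferring this bound onto $u$}, gives the desired favourable plateau.

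Step~2 (spreading at the full monostable speed). Starting from the plateau furnished by Step~1, I would restart the sub-solution construction in a moving frame. For each $e\in S^{N-1}$ and large $T_1$, place a compactly supported bump of height $\eta$ near the point $c_0 T_1 e$, which lies well inside the valid outer region since $c_0>c'$. By Lemma~\ref{lem:eq-scalar-eps}$(ii)$, the scalar solution of \eqref{eq-scalar-eps} starting from this bump spreads at speed $c_\varepsilon>c$ in every direction, so it reaches $p_\varepsilon-o(1)$ at the target point $cte$ as $t\to\infty$. Uniformity in $e\in S^{N-1}$ follows from the rotational invariance of~\eqref{1.1} and from taking a radial bump translated identically along each direction~$e$. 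Letting $\varepsilon\to 0^+$ so that $p_\varepsilon\to 1$, and using the trivial bound $u\leq 1$, yields $u(t,x+cte)\to 1$ locally uniformly in~$x$ and uniformly in~$e$.

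The main obstacle. The crux of the argument is the ``transfer'' step at the end of Step~1: $u$ is only a super-solution of \eqref{eq-scalar-eps} on the moving outer region $\{\|x\|\geq c't\}$, whereas the scalar sub-solution $\underline{w}$ spreads in all directions and may well enter the inner region $\{\|x\|<c't\}$ where we have no control on $v$ and the comparison with $u$ breaks down. In the KPP setting one could sidestep this difficulty by using arbitrarily small exponential sub-solutions (see Remark~\ref{rmk:KPP2}), but here the non-KPP monostable structure forces one to use a genuine bump sub-solution of non-trivial amplitude and support. My approach would be to compare $u$ and $\underline{w}$ on the parabolic cylinder $\{\|x\|\geq c't\}$ with a carefully chosen boundary datum on the moving sphere $\|x\|=c't$ (for instance zero, or a cutoff barrier moving at speed~$c'$), so that only the outward spreading of $\underline{w}$ influences the comparison, thereby recovering a clean lower bound on $u$ on the target moving frame.
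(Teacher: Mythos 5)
Your overall plan correctly identifies the key ingredients (the upper bound on $v$ from Section~\ref{sec:outer}, the asymptotic reduction to the scalar monostable equation~\eqref{eq-scalar-eps}, and the convergence $p_\varepsilon\to 1$, $c_\varepsilon\to c^*$ from Lemma~\ref{lem:eq-scalar-eps}), and you have also correctly located the one genuinely hard point: $u$ is a super-solution of~\eqref{eq-scalar-eps} only on a moving outer region, while a bump solution of~\eqref{eq-scalar-eps} spreads inward as well as outward. However, the resolution you sketch for this obstacle does not hold up, so the proof has a genuine gap.

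The problem with comparing on the parabolic cylinder $\{\|x\|\geq c't\}$ is the following. If $\underline{w}$ solves~\eqref{eq-scalar-eps} on the whole of $\R^N$ with a nontrivial compactly supported initial bump at time $T_1$, then by the strong maximum principle $\underline{w}(t,\cdot)>0$ everywhere for every $t>T_1$. So $\underline{w}$ is \emph{already} strictly positive on the inner boundary $\|x\|=c't$ for all $t>T_1$, and you cannot impose the lateral boundary condition $\underline{w}\leq u$ there: the comparison on the cylinder fails at the first instant. Replacing the boundary datum by zero amounts to solving a Dirichlet problem on the moving exterior domain, which is a different object from~\eqref{eq-scalar-eps}, and Lemma~\ref{lem:eq-scalar-eps} no longer applies to it. Cutting off $\underline{w}$ by a moving barrier is a plausible fix, but you have neither constructed the barrier nor verified the sub-solution inequality across the cutoff; as stated, this is precisely the step that would require the most work, and it is left as a gesture.

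The paper avoids this difficulty in two ways that are absent from your argument. First, its Step~1 does not use a spreading bump at all; it uses the \emph{travelling} compactly supported sub-solution
$\underline u(t,x)=\eta\,e^{-\frac{c'}{2d}(x\cdot e-c't)}\,\phi_{2R}\bigl(x-(c't+X_\varepsilon+2R)e\bigr)$,
whose support is \emph{always} contained in the favourable region $\{\|x\|\geq X_\varepsilon+c^{**}t\}$, so that the comparison is unconditional. This only yields a small, possibly $\varepsilon$-independent, positive lower bound $\delta_1$, not a value close to $p_\varepsilon$ --- and this is the point: the paper does not attempt to get close to $p_\varepsilon$ by direct comparison inside the cylinder. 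Second, the passage to $p_\varepsilon$ is done by a limiting argument (the paper's Step~3): one extracts sequences $t_n\to\infty$, $e_n\to e_\infty$ and passes to the limit $u_n(t_n+t,\,x+ct_ne_n)\to u_\infty(t,x)$. Since $c>c^{**}$, the distance from $ct_ne_n$ to the inner boundary grows like $(c-c^{**})t_n\to\infty$, so in the limit the inequality $\partial_t u_\infty-d\Delta u_\infty\geq u_\infty F(u_\infty,\varepsilon)$ holds on \emph{all} of $\R\times\R^N$, with no moving boundary in sight. Only then can one compare with the free bump solution of~\eqref{eq-scalar-eps} and invoke Lemma~\ref{lem:eq-scalar-eps}~$(ii)$. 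This limiting step is the essential idea your proposal is missing; without it, or a carefully built cutoff barrier, the transfer from the scalar equation to $u$ is not justified.
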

\begin{proof}
First choose any $\varepsilon>0$ such that the inequality $ 2 \sqrt{dF(0,\varepsilon)} > c$ holds, thanks to the continuity of~$F$ and our choice of $c \in (c^{**},2\sqrt{dF(0,0)})$. We recall from the previous section, where we bounded $v$ from above by exponential functions moving with speed $c^{**}$ and defined in~\eqref{sursol_11}, that there exists some $X_\varepsilon >0$ such that
\begin{equation}\label{spread_lem1_eq1}
v (t,x) \leq \varepsilon, \text{ for all } (t,x) \mbox{ such that } \|x \| \geq X_\varepsilon + c^{**} t.
\end{equation}
The proof of Lemma~\ref{spread_lem1} will be divided into three steps.

\paragraph{Step 1:} We choose some $c' \in (c,2\sqrt{dF(0,\varepsilon)})$ and claim that
\begin{equation}\label{lem1_step1}
\exists R_1 >0, \delta_1 >0,  \ \ \liminf_{t \to +\infty} \inf_{e \in S^{N-1}} \inf_{x \in B_{R_1} } u \left( \frac{ct}{c'},x+(ct+X_\varepsilon+ 2R_1 )e \right) > \delta_1,
\end{equation}
where $B_R$ denotes the ball of radius~$R$ centred at the origin. 

Let $\phi_{2R}$ be the principal eigenfunction, normalized so that $\| \phi_{2R} \|_\infty =1$, of the Laplace operator on the ball $B_{2R}$ with Dirichlet boundary conditions, that is solving
$$
\left\{
\begin{array}{ll}
\Delta \phi_{2R} = \mu_{2R} \phi_{2R} & \mbox{ in } B_{2R},\vspace{3pt}\\
\phi_{2R} >0 & \mbox{ in } B_{2R},\vspace{3pt}\\
\phi_{2R} =0& \mbox{ on } \partial B_{2R}.
\end{array}
\right.
$$
It is known that $\mu_{2R} <0$ for any $R>0$, and tends to $0$ as $R \to +\infty$. In what follows, we will extend $\phi_{2R}$ on the whole space by setting $\phi_{2R} (x)=0$ if $\| x \| > 2 R$.

Recall that $c'  \in \left( c, 2\sqrt{dF (0,\varepsilon)} \right)$, thus for any $e \in S^{N-1}$, the function 
$$\displaystyle \underline{u} (t,x) :=\displaystyle  \eta e^{-\frac{c'}{2d} (x \cdot e -c't) } \phi_{2R} (x-(c't + X_\varepsilon +2R)e)$$
is a sub-solution of the $u$-equation in system~\eqref{1.1}. Indeed, one has:
\begin{equation*}
\begin{split}
\partial_t \underline{u} - d \Delta \underline{u} - \underline{u} F (\underline{u},v) &\leq   \partial_t \underline{u} - d \Delta \underline{u} - \underline{u} F (\underline{u},\varepsilon) \\
& \leq  \left( \frac{c'^2}{4d} - d\mu_{2R} \right) \underline{u} - \underline{u} F (\underline{u},\varepsilon) \\
& \leq   \left( \frac{c'^2}{4d} - d\mu_{2R} - F (0,\varepsilon) \right) \underline{u} + \underline{u}^2 \| \partial_u F \|_\infty \\
& \leq  0,
\end{split}
\end{equation*}
where the computation is performed on the support of $\underline{u}$, which is included in the set $\{ (t,x) \in \R^{N+1} \; | \ \|x \| \geq X_\varepsilon + c^{**}t \}$ where $v \leq \varepsilon$ thanks to~\eqref{spread_lem1_eq1}, and the last inequality is true provided that $R$ is large enough and $\eta$ small enough so that 
$$ -d \mu_{2R} < F (0,\varepsilon) - \frac{c'^2}{4d},$$
$$ \| \partial_u F \|_\infty \underline{u} \leq  F (0,\varepsilon) - \frac{c'^2}{4d} + d \mu_{2R}.$$
Choosing $\eta$ even smaller if necessary and recalling that $u$ is positive for any positive time, we can also assume that $u(1,x) \geq \underline{u} (1,x)$ for all $x \in \R^N$. Hence, by the comparison principle, we get that 
$$u \left( \frac{ct}{c'},x \right) \geq  \underline{u} \left( \frac{ct}{c'},x \right) = \underline{u} (0,x-cte),$$
for any $t \geq \frac{c'}{c}$, $x \in \R^N$ and $e \in S^{N-1}$. The claim~\eqref{lem1_step1} easily follows by taking $R_1 = R$ and $$\delta_1 =  \eta e^{-\frac{c'}{2d} (X_\varepsilon + 3R)} \inf_{x \in B_R} \phi_{2R} (x) >0.$$

\paragraph{Step 2:} We now claim that
\begin{equation}\label{lem1_step2}
\exists R_2 >0, \delta_2 >0,  \ \ \liminf_{t \to +\infty} \inf_{e \in S^{N-1}} \inf_{(s,x)\in D_t} u \left( s,x+(ct+X_\varepsilon+ 2R_1)e \right) > \delta_2,
\end{equation}
wherein $D_t$ is defined by
$$D_t := \left\{ (s,x) \; | \ \frac{ct}{c'} \leq s \leq t \mbox{ and } x \in B_{R_2} \right\}.$$
In other words, we now bound $u$ away from zero not only in the moving frames with speed $c$ in all directions, but also around the whole sphere $\{ \|x \| = ct \}$ for a large range of times $s \in (\frac{ct}{c'} , t)$.

Note that in the previous step, $R_1$ can be chosen arbitrarily large. Hence, proceeding as above, we can also construct a stationary sub-solution of \eqref{eq-scalar-eps} of the form $\underline{\underline{u}} (x) := \eta ' \phi_{R_1 /2} (x)$ by increasing $R_1$ if necessary. Choosing $\eta' < \delta_1$, we get from \eqref{lem1_step1} that
$$u \left( \frac{ct}{c'},x + (ct + X_\varepsilon + 2 R_1) e \right) \geq \underline{\underline{u}} (x),$$
for any large $t$, $x \in \R^N$ and $e\in S^{N-1}$. Using the comparison principle, we conclude that \eqref{lem1_step2} holds true with $R_2 = \frac{R_1}{3}$ and $$\delta_2 =\eta' \inf_{x \in B_{R_2}} \phi_{R_1 /2} (x)>0.$$

\paragraph{Step 3:} We will now complete the proof of Lemma~\ref{spread_lem1}. Thanks to standard parabolic estimates and \eqref{spread_lem1_eq1}, one can extract a subsequence $t_n \to +\infty$ and $e_n \to e_\infty$ in $S^{N-1}$ such that
$$u_n (t,x) := u (t_n +t, x + ct_n e_n)$$
converges locally uniformly to $u_\infty (t,x)$. Using \eqref{spread_lem1_eq1} again, together with the monotonicity of $F$ with respect to $v$, we see that $u_\infty$ is a super-solution of
$$\partial_t u_\infty - d \Delta u_\infty - u_\infty F(u_\infty ,\varepsilon) \geq 0,$$
for all $t\in \R$ and $x \in \R^N$. Moreover, by \eqref{lem1_step2}, we know that for all $t \leq 0$,
$$\inf_{x \in B_{R_2} } u_\infty (t,x+(X_\varepsilon+2 R_1)e_\infty ) \geq \delta_2.$$
By the comparison principle, it is clear that for all $t \geq 0$,
$$u_\infty (0,x) \geq \tilde{u} (t,x),\;\forall x\in\R^N.$$
Here $\tilde{u}$ denotes the solution of
$$\partial_t \tilde{u} - d \Delta \tilde{u} - \tilde{u} F(\tilde{u} ,\varepsilon) = 0,$$
with initial data $\tilde u(0,x)=\delta_2 \chi_{B_{R_2}} (x-(X_\varepsilon +2 R_1)e_\infty)$, and $\chi$ is the characteristic function.

From Lemma~\ref{lem:eq-scalar-eps}, we know that $\tilde{u} (t,\cdot)$ converges locally uniformly to $p_\varepsilon$ as $t \to +\infty$, thus $u_\infty (0,x) \geq p_\varepsilon$ for all $x \in \R^N$ (in fact, the same argument shows that this is also true at any other time). Recalling the construction of $u_\infty$, we get that
$$\liminf_{t \to +\infty} u(t, x+cte ) \geq p_\varepsilon,$$
locally uniformly with respect to $x \in \R^N$, and uniformly with respect to $e \in S^{N-1}$. As $\varepsilon$ was chosen arbitrarily small, we can pass to the limit as $\varepsilon \to 0$, which completes the proof of Lemma~\ref{spread_lem1}.\end{proof}

\begin{remark} We point out that the intended conclusion, namely \textit{$(ii)$} of Theorem~\ref{THEO1}, could easily follow in the KPP case from the  lemma above. The second part below extends our spreading result for the intermediate zone to more general nonlinearities, and we include it for the sake of completeness.\end{remark}

In order to complete the proof of Theorem~\ref{THEO1} \textit{$(ii)$}, we need to show that~$u$ converges to 1 uniformly in the region $c_1 t \leq \| x \| \leq c_2 t$ as $t \to \infty$. This will be done by constructing an appropriate set of sub-solutions with compactly supported initial data. However, as we are dealing with a general monostable nonlinearity, finding a good sub-solution is not as simple as in the KPP case. Now that we have found a large enough zone where $u$ takes relatively large values, we can use some slight modification of the argument of the paper of Aronson and Weinberger~\cite{AW78} to construct two families of sub-solutions of the monostable equation~\eqref{eq-scalar-eps}, one moving with speed~$c_1$ and the other with speed~$c_2$.

First, the following lemma holds:
\begin{lemma}\label{lem:AW}
For any $0 < c <c_\varepsilon$, there exists some $\alpha (c) < p_\varepsilon$ such that, for any $\alpha \geq \alpha (c)$, the solution~$q_c$ of
\begin{equation}\label{eq:ode}
dq'' + cq'+ qF(q,\varepsilon)=0 , \  q'(0) = 0 \ \mbox{ and } q (0 ) = \alpha,
\end{equation}
satisfies $q_c $(b)$ = 0$ for some $b >0$, as well as $q_c ' < 0$ on $(0,b]$.
\end{lemma}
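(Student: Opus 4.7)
The plan is a phase-plane analysis of the second-order ODE~\eqref{eq:ode}, exploiting the fact that $c < c_\varepsilon$ rules out any monotone travelling wave of~\eqref{eq-scalar-eps} with speed $c$ connecting $p_\varepsilon$ to $0$. Fix $\alpha \in (0, p_\varepsilon)$ and write $q = q_c(\cdot; \alpha)$ throughout.

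The first step is to show that $q$ is strictly decreasing as long as it remains positive. Since $q''(0) = -\alpha F(\alpha,\varepsilon)/d < 0$ by~\eqref{eqn:monostable_def}, one has $q' < 0$ for small $z > 0$. If there were a first point $z_0 > 0$ with $q'(z_0) = 0$ and $q(z_0) > 0$, then $q(z_0) \in (0, p_\varepsilon)$ would give $F(q(z_0),\varepsilon) > 0$, and the ODE would force $q''(z_0) < 0$, contradicting the fact that $z_0$ should be a local minimum of~$q$. Hence $q' < 0$ on the maximal interval where $q > 0$, and so either $q$ reaches $0$ at some finite $b > 0$ with $q'(b) < 0$ (otherwise $(q(b),q'(b))=(0,0)$ would force $q \equiv 0$ by uniqueness at the equilibrium), or else $q > 0$ on $[0,\infty)$ and the bounded monotone trajectory converges to an equilibrium in $[0, p_\varepsilon)$, which can only be $0$.

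The heart of the proof, and its main obstacle, is to rule out this second alternative when $\alpha$ is sufficiently close to $p_\varepsilon$. I would argue by contradiction, assuming a sequence $\alpha_n \nearrow p_\varepsilon$ for which $q_n := q_c(\cdot;\alpha_n)$ all remain positive on $[0,\infty)$ and decay to~$0$ at infinity. By continuous dependence on initial data, $q_n \to p_\varepsilon$ uniformly on every compact interval $[0, T]$, so the first time $T_n$ at which $q_n(T_n) = p_\varepsilon/2$ must satisfy $T_n \to +\infty$. The translated profiles $\tilde q_n(z) := q_n(z + T_n)$ are defined on $[-T_n,\infty)$, are monotonically decreasing, uniformly bounded in $[0, p_\varepsilon]$, satisfy the same ODE, and verify $\tilde q_n(0) = p_\varepsilon/2$. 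Standard $C^2_{\rm loc}$ compactness then yields a subsequence converging locally uniformly on $\R$ to a nonincreasing solution $\tilde q : \R \to [0, p_\varepsilon]$ of $d \tilde q'' + c \tilde q' + \tilde q F(\tilde q,\varepsilon) = 0$ with $\tilde q(0) = p_\varepsilon/2$. The limits $\tilde q(\pm\infty)$ exist by monotonicity and are forced to be equilibria, hence $\tilde q(-\infty) = p_\varepsilon$ and $\tilde q(+\infty) = 0$. This produces a monotone travelling wave of~\eqref{eq-scalar-eps} with speed $c$, contradicting Lemma~\ref{lem:eq-scalar-eps}\textit{(i)} since $c < c_\varepsilon$.

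It then remains to conclude by setting $\alpha(c) := \inf\{\alpha \in (0, p_\varepsilon) : q_c(\cdot;\alpha) \text{ reaches } 0 \text{ in finite time}\}$. The previous step gives $\alpha(c) < p_\varepsilon$; openness of this ``good'' set, a consequence of continuous dependence together with the transversality $q'(b) < 0$, then ensures that every $\alpha$ in a suitable left-neighbourhood of $p_\varepsilon$ is admissible, possibly after replacing $\alpha(c)$ by a slightly larger value.
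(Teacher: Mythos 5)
Your argument is correct, but it takes a genuinely different route from the paper's. The paper dispatches this lemma in a few lines by invoking the classical phase-plane analysis of Aronson and Weinberger~\cite{AW78} (their Lemma~4.3): for $c<c_\varepsilon$ the trajectory leaving the equilibrium $(p_\varepsilon,0)$ in the $(q,q')$-plane enters the half-plane $\{q<0\}$, and by continuous dependence orbits starting at $(\alpha,0)$ with $\alpha$ close to $p_\varepsilon$ shadow that trajectory and hence also cross $q=0$ with negative slope. You instead deduce the lemma from Lemma~\ref{lem:eq-scalar-eps}\,\textit{(i)}: if solutions stayed positive for a sequence $\alpha_n\nearrow p_\varepsilon$, normalizing at the first crossing of $p_\varepsilon/2$ and passing to a translated limit would produce a monotone front of speed $c<c_\varepsilon$, which is excluded. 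There is no circularity, since Lemma~\ref{lem:eq-scalar-eps} is established beforehand and independently of the present lemma; what your route buys is a self-contained proof that avoids reworking the unstable-manifold analysis of \cite{AW78}, at the price of the compactness machinery (one should record that the uniform bound on $q_n'$ follows, e.g., from the nonincreasing energy $\tfrac{d}{2}(q')^2+\int_0^q sF(s,\varepsilon)\,ds$, after which Arzel\`a--Ascoli applies, and that the monotone limit profile has equilibrium limits at $\pm\infty$ and stays strictly between $0$ and $p_\varepsilon$ by uniqueness at the equilibria).

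Two small points to polish. In your first step, the contradiction at a first zero $z_0$ of $q'$ is not that $z_0$ ``should be a local minimum'' of $q$; rather, $q'<0$ on $(0,z_0)$ together with $q'(z_0)=0$ forces $q''(z_0)\ge 0$, against the sign $q''(z_0)=-q(z_0)F(q(z_0),\varepsilon)/d<0$ given by the equation. In your last step, the detour through the infimum of the ``good'' set and its openness is both unnecessary and insufficient as stated (openness alone does not yield an interval reaching up to $p_\varepsilon$); your contradiction argument already shows directly that the bad values of $\alpha$ cannot accumulate at $p_\varepsilon$, which is exactly the assertion for $\alpha\in[\alpha(c),p_\varepsilon)$ --- the range in which the lemma is actually applied in the proof of Theorem~\ref{THEO1}~\textit{(ii)}.
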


Those functions can be obtained by some standard phase plane analysis. Roughly speaking, trajectories from $(q=p_\varepsilon,q'=0)$ either enter the $\{q <0\}$ half-plane if $c< c_\varepsilon$, or converge to $(q=0,q'=0)$ when $c \geq c_\varepsilon$, in which case they are travelling wave solutions. Those trajectories are of course approached by solutions of \eqref{eq:ode} with $\alpha$ close enough to $p_\varepsilon$. We refer the reader to~\cite{AW78} (in particular Lemma~4.3) for the details.\\

We are now in a position to complete the proof of Theorem~\ref{THEO1} \textit{$(ii)$} without the KPP assumption.\\
%

\begin{proof}[Proof of Theorem~\ref{THEO1} \textit{$(ii)$}] Let $c \in (c_2,c_\varepsilon)$ be given (recall that $c^{**} < c_1 < c_2 < c^*$), and let us consider the two family of functions
$$\underline{u}_{c_i}  (t,x;e)= 
\left\{
\begin{array}{ll}
\alpha &  \mbox{ if } \ \|x - c_i te \| < \rho ,\vspace{3pt}\\
q_{c} ( \| x - c_i te \| - \rho ) & \mbox{ if } \  \rho \leq \| x - c_i te \| \leq \rho + b,\vspace{3pt}\\
0 &   \mbox{ if } \ \|x -c_i te \| > \rho + b,\vspace{3pt}
\end{array}
\right.
$$
where $i=1,2$, $e \in S^{N-1}$, $1 > \alpha \geq \alpha (c)$ is provided by Lemma~\ref{lem:AW} and $\rho>0$ is to be chosen large enough later.  

From Lemma~\ref{spread_lem1}, there exists some $T>0$ such that
$$u (T,x) > \alpha,$$
for all $\|x \| \in (cT - \rho - b, cT +\rho +b)$. Thus, denoting $T_i = \frac{c}{c_i} T$, we get
$$u (T,x) > \underline{u}_{c_i} (T_i,x;e),$$
for any $i=1,2$ and $e \in S^{N-1}$. Recall that for any $\varepsilon >0$, there exists some $X>0$ such that
$$v (t,x) \leq \varepsilon, \ \forall (t,x) \mbox{ such that } \|x \| \geq X + c^{**} t.$$
In particular, the solution $u (t,x)$ of \eqref{1.1} is a super-solution of \eqref{eq-scalar-eps} in the same domain, in which is included the support of the functions $\underline{u}_{c_i} (\cdot,\cdot;e)$ for any $t \geq T_i$, up to increasing $T$.

Let us now check that these functions $\underline{u}_{c_i}$ are sub-solutions for~\eqref{eq-scalar-eps}. This is trivial for $\|x -c_i te\| < \rho$ and $\|x - c_i t e\| > \rho + b$. Lastly, for $\rho < \|x -c_i te\| < \rho +b$, one can compute that
\begin{eqnarray*}
&& \partial_t \underline{u}_{c_i} - d \Delta \underline{u}_{c_i} - \underline{u}_{c_i} F(\underline{u}_{c_i},\varepsilon)\\
& = &  -q'_{c} (\| x - c_i te\| - \rho) \times \frac{ c_i (x-c_i te) \cdot e }{\|x-c_ite\|}  -q_{c} (\| x - c_i te\| - \rho) F(q_{c} (\| x - c_i te\| - \rho),\varepsilon)\\
&&  - d \left( q''_{c} (\| x - c_ite\| - \rho ) + q'_{c} (\| x - c_ite\| - \rho) \times \frac{N-1}{\|x - c_i te\|} \right) \\
& = &  q'_{c} (\| x - c_ite\| - \rho) \times \left( c - c_i \frac{(x-c_ite)\cdot e}{\|x - c_ite\|} - d \frac{N-1}{\|x -c_ite\|} \right)\\
& \leq & q'_{c} (\| x - c_ite\| - \rho) \left( c -  c_i - d \frac{N-1}{\|x -c_ite\|} \right)\\
& < & 0,
\end{eqnarray*}
provided that $\rho$ is large enough (recall that $q'_c < 0$).

This is sufficient to apply a comparison principle and get that $u(t+T,x) > \underline{u}_{c_i} (t+T_i,x;e)$ for any $e$, $i=1,2$ and $t>0$. Indeed, proceed by contradiction and assume that for some $e$ and $i$, there exists some $t_0 > 0$ such that for all $0 \leq t < t_0$,
$$u (t+T,x) > \underline{u}_{c_i} (t+ T_i,x;e), $$
and at time $t_0$, there exists $x_0 \in \R^N$ such that
$$u(t_0 +T ,x_0) = \underline{u}_{c_i} (t_0 +T_i,x_0;e).$$
From the positivity of $u$ and the strong maximum principle, one immediately gets that $\|x_0 - c_i (t_0 +T_i) e\| = \rho$. Then, applying parabolic Hopf lemma inside the ball $\{ \| x - c_i (t+T_i) e \| < \rho \}$, we get that
$$\nabla u (t_0,x_0) \cdot \frac{x_0 - c_i (t_0 + T_i) e}{\|x_0 - c_i (t_0 + T_i)\|} <0.$$
Applying it outside of the same ball, 
$$\nabla u (t_0,x_0) \cdot \frac{x_0 - c_i (t_0 + T_i) e}{\|x_0 - c_i (t_0 + T_i)\|} > q'_c (0)=0.$$
Having reached this contradiction, we can now conclude that for all positive time $t$,
\begin{equation}\label{eq:33}u (t+T,x) > \underline{u}_{c_i} (t+T_i,x;e).
\end{equation}
Define now the subdomain 
$$Q := \{ \{t \} \times Q_t \}_{t \geq T}, \mbox{ where } Q_t :=\{ x \; | \  cT +c_1 (t-T) \leq  \|x\| \leq cT + c_2 (t-T) \}.$$
It immediately follows from \eqref{eq:33} that $u \geq \alpha$ on $\partial Q$. Recall once again \eqref{spread_lem1_eq1}, so that $\alpha$ is a sub-solution of the equation satisfied by $u$ in $Q$. Using some extended maximum principle (see for instance Lemma~2.2 in \cite{BHNa}), it follows that 
$$\forall (t,x) \in Q, \ u (t,x) \geq \alpha.$$
Thus one obtains
$$\liminf_{t \to \infty} \inf_{c_1 t + (c-c_1)T \leq \|x\| \leq c_2 t + (c-c_2) T} u (t,x) \geq \alpha,$$
and, by slightly modifying $c_1$ and $c_2$ and without loss of generality, one can easily get rid of the extra $(c-c_i)T$ so that
$$\liminf_{t \to \infty} \inf_{c_1 t \leq \|x\| \leq c_2 t} u (t,x) \geq \alpha.$$
Since $\varepsilon$ could be chosen arbitrarily small, and $\alpha$ arbitrarily close to $p_\varepsilon$, this yields
$$\liminf_{t \to \infty} \inf_{c_1 t \leq \|x\| \leq c_2 t } u (t,x) \geq 1.$$
This completes the proof of statement~\textit{$(ii)$} in Theorem~\ref{THEO1}.\end{proof}

\section{Lower estimates on the spreading speed}\label{sec:inner}

It now remains to prove statements~\textit{$(iii)$} of Theorem~\ref{THEO1} and~\textit{$(ii)$} of Theorem~\ref{THEO2} concerning the final zone (see Figures~1 and~2). These two statements will be proved simultaneously. Throughout this section, we will assume that $c^0$ is an arbitrarily fixed constant such that
\begin{equation}\label{eq:speed_inner}
 0 \leq c^0 < \min \{ c^*, c^{**} \}.
\end{equation}
\begin{remark}\label{rmk:KPP3}
As in the previous sections, one may weaken the KPP hypothesis on $F$. We will again only assume that equation~\eqref{eq-scalar} is monostable and, instead of inequality~\eqref{eq:speed_inner}, that
$$ 0 \leq c^0 < \min \{ 2\sqrt{dF(0,0)}, c^{**} \}.$$
In particular, in the slow predator case, statement~\textit{$(iii)$} of Theorem~\ref{THEO1} still holds true provided that $ c^{**} < 2\sqrt{dF(0,0)} $, even if $c^* > 2 \sqrt{dF(0,0)}$. On the other hand, in the fast predator case, we are not able to describe the behaviour of solutions in the range $2\sqrt{dF(0,0)}t \leq \| x \| \leq c^* t $; thus, we cannot exhibit in general a definite spreading speed without this additional KPP assumption.
\end{remark}

As mentioned in the outline of the paper, our argument is split into three steps. We first deal with pointwise weak spreading before dealing with pointwise spreading. We will then conclude the proof by establishing uniform spreading in the final zone.

\subsection{First step: Pointwise weak spreading}

The first step is to prove the following lemma, which states that the $u$-component of the solution converges to neither 0 and 1,  and the $v$-component does not converge to 0. Furthermore, this property is in some sense uniform with respect to any given bounded set $C\cap B_X (0,\kappa)$ of initial data.

\begin{lemma}\label{lem:1}
Let $\kappa >0$ be given. Then there exists $\varepsilon_1 (\kappa,c^0) >0$ such that, for any $(u_0,v_0) \in C\cap B_X (0,\kappa)$ with $u_0 \not \equiv 0$, for all $c \in [0,c^0]$, $e \in S^{N-1}$ and $x\in\mathbb{R}^N$:
$$
\displaystyle\limsup_{t \to +\infty} u (t,x+cte) \geq \varepsilon_1 (\kappa, c^0),
$$
and, if moreover $v_0 \not \equiv 0$,
$$\left\{
\begin{array}{l}
\displaystyle\limsup_{t \to +\infty} v (t,x+cte) \geq \varepsilon_1 (\kappa, c^0),\vspace{2pt}\\
\displaystyle\liminf_{t \to +\infty} u (t,x+cte) \leq 1 -\varepsilon_1 (\kappa,c^0).
\end{array}
\right.
$$
\end{lemma}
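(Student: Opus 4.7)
The plan is to establish all three inequalities by a unified compactness-and-contradiction argument carried out in moving frames, in which Assumption~\ref{ASS-U} provides uniform trajectory bounds and parabolic Schauder estimates allow passage to the limit.

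For the first inequality I argue by contradiction: suppose there exist sequences $(u_0^n, v_0^n) \in C \cap B_X(0, \kappa)$ with $u_0^n \not\equiv 0$, $c_n \in [0, c^0]$, $e_n \in S^{N-1}$, $x_n \in \mathbb{R}^N$, and $t_n \to +\infty$ such that $u^n(t_n, x_n + c_n t_n e_n) \to 0$. Set
\begin{equation*}
\hat u_n(s, y) := u^n(t_n + s, x_n + c_n(t_n + s) e_n + y), \quad \hat v_n(s, y) := v^n(t_n + s, x_n + c_n(t_n + s) e_n + y),
\end{equation*}
which satisfy the system in the moving frame, with an added drift $c_n e_n \cdot \nabla$. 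By Assumption~\ref{ASS-U} and parabolic regularity, up to a subsequence $c_n \to c_\infty \in [0, c^0]$, $e_n \to e_\infty \in S^{N-1}$, and $(\hat u_n, \hat v_n)$ converges locally uniformly on $\mathbb{R} \times \mathbb{R}^N$ to an entire solution $(U, V)$ of the limit system, with $U(0, 0) = 0$. The strong maximum principle, applied to $U \geq 0$, forces $U \equiv 0$ on $(-\infty, 0] \times \mathbb{R}^N$; then $V$ satisfies there $\partial_s V - \Delta V - c_\infty e_\infty \cdot \nabla V \leq G(0, 0) V$, and since $G(0, 0) < 0$, comparison with the space-independent supersolution $M(\kappa)\exp(G(0,0)(s - s_0))$ on $[s_0, 0]$ and letting $s_0 \to -\infty$ yields $V \equiv 0$ on the same half-space, hence $(U, V) \equiv (0, 0)$ on $\mathbb{R} \times \mathbb{R}^N$ by forward uniqueness. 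To reach a contradiction I then use $c_\infty < c^*$: because $\hat v_n \to 0$ locally uniformly, the $\hat u_n$-equation becomes, up to small perturbations on $[-T, 0] \times B_R$, a scalar monostable equation with drift of magnitude $c_n$; since the associated scalar spreading speed $c^*$ strictly exceeds $c_n$, an exponential-times-principal-eigenfunction sub-solution analogous to the one in Lemma~\ref{spread_lem1}, placed below $\hat u_n(-T, \cdot)$ using the parabolic Harnack inequality together with the positivity of $\hat u_n$ provided by $u_0^n \not\equiv 0$, propagates to the origin with a value uniformly bounded away from $0$, contradicting $\hat u_n(0, 0) \to 0$.

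For the second inequality, the same compactness machinery applied to the hypothesis $v^n(t_n, x_n + c_n t_n e_n) \to 0$ yields a limit with $V(0, 0) = 0$, hence $V \equiv 0$ on $(-\infty, 0] \times \mathbb{R}^N$ by the strong maximum principle. The equation for $U$ on that half-space reduces to the scalar monostable equation $\partial_s U - d\Delta U - c_\infty e_\infty \cdot \nabla U = U F(U, 0)$, and the sub-solution scheme of Lemma~\ref{spread_lem1} combined with the positivity of $\hat u_n$ yields $U \equiv 1$ on $(-\infty, 0] \times \mathbb{R}^N$. Linearizing the $V$-equation around $(1, 0)$ gives $\partial_s V - \Delta V - c_\infty e_\infty \cdot \nabla V \approx G(1, 0) V$ with $G(1, 0) > 0$; since $c_\infty < c^{**} = 2\sqrt{G(1, 0)}$, a compactly supported exponential sub-solution of this linear equation placed below $\hat v_n(-T, \cdot)$ (using Harnack and $v_0^n \not\equiv 0$) propagates at an effective speed strictly less than $c^{**}$ and cannot decay to zero, contradicting $\hat v_n(0, 0) \to 0$. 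The third inequality is proved in the same way from the hypothesis $u^n(t_n, x_n + c_n t_n e_n) \to 1$: the strong maximum principle applied to $1 - U \geq 0$ gives $U \equiv 1$ on $(-\infty, 0] \times \mathbb{R}^N$; the same sub-solution argument for $V$ then forces $V(0, 0) > 0$, and the monotonicity $F(1, v) < F(1, 0) = 0$ for $v > 0$ drives $U$ strictly below $1$, a contradiction.

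The hard part throughout is the sub-solution construction in the moving frame: it must lie below an $n$-dependent, potentially small, lower bound of $\hat u_n$ (or $\hat v_n$) on a compact set at the initial time $s = -T$, and yet propagate in the moving frame fast enough to reach the origin with a positive value. This is where the strict inequality $c_\infty \leq c^0 < \min\{c^*, c^{**}\}$ is essential, and where the strong maximum principle and parabolic Harnack inequality, used to turn pointwise positivity into positivity on compact sets, play the key role.
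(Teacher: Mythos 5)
Your overall framework (moving-frame compactness via Assumption~\ref{ASS-U}, strong maximum principle, killing the limit $V$ through the supersolution built from $G(0,0)<0$) coincides with the opening of the paper's proof, but the mechanism you use to reach the contradiction has a genuine gap. You negate the lemma only at a single time (``$u^n(t_n,x_n+c_nt_ne_n)\to 0$'') and then try to propagate an eigenfunction-type sub-solution from time $t_n-T$ to time $t_n$, anchoring it ``below $\hat u_n(-T,\cdot)$ using the parabolic Harnack inequality together with the positivity of $\hat u_n$''. This cannot work: the compactness step you just carried out shows $\hat u_n\to U\equiv 0$ locally uniformly on $(-\infty,0]\times\R^N$ (along the subsequence), so $\sup_{B_R}\hat u_n(-T,\cdot)\to 0$ for every fixed $T$ and $R$; Harnack and the strong maximum principle only give qualitative positivity, or bounds relative to other values that also vanish, never an $n$-uniform amplitude. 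Hence the sub-solution's amplitude must be taken $\eta_n\to 0$, and over the fixed window $[-T,0]$ it delivers at the origin a lower bound that also tends to $0$ --- no contradiction with $\hat u_n(0,0)\to 0$. The same defect appears in your second and third parts (``placed below $\hat v_n(-T,\cdot)$ using Harnack and $v_0^n\not\equiv 0$''), and in addition the claims ``$U\equiv 1$ on $(-\infty,0]\times\R^N$'' and ``$V(0,0)>0$'' do not follow from the limit system: $(U,V)\equiv(0,0)$ and $(U,V)\equiv(1,0)$ are perfectly admissible ancient/entire solutions of \eqref{1.1.drift}, so no contradiction can be extracted at the level of the limit alone. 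Note also that if your single-time argument were correct it would prove directly the stronger pointwise liminf bound of Lemma~\ref{lem:2}, which the paper only obtains afterwards through a separate persistence argument --- a sign that the shortcut overreaches.

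The correct negation gives more: smallness of $u_n$, of $1-u_n$, or of $v_n$ along the ray for \emph{all} $t\geq t_n$, and the paper exploits precisely this by running the comparison \emph{forward} in time from $t_n$ over an infinite horizon. There the nontriviality of the data gives some positive, possibly $n$-dependent, bound at time $t_n$, under which one slips the compactly supported sub-solution $\eta\, e^{-\frac{c_n}{2d}x\cdot e_n}\phi_R(x)$; the associated solution of the frozen problem is increasing in time and converges to a stationary state $p_{n,R,\delta}$ that is independent of $\eta$, and which tends to $1$ (via Lemma~\ref{lem:monostable_stationary}, using $c^0<2\sqrt{dF(0,0)}$) as $n,R\to\infty$ and $\delta\to 0$. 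This yields the $n$-uniform conclusion of Claim~\ref{Claim1} ($u_n$ close to $1$ on $B_R$ for all forward times), after which the analogous forward construction for $v$ (using $c^0<c^{**}$) forces $v_n$ above a fixed positive constant and consequently $u_n$ strictly below $1$, which is the contradiction. If you want to keep your structure, you must replace the backward-in-time anchoring by this forward-in-time, infinite-horizon comparison and use the full ``for all $t\geq t_n$'' form of the negation.
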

\begin{remark}
Note that it is immediate, from this statement, that $\varepsilon_1 (\kappa,c^0)$ can be chosen to be nonincreasing with respect to $\kappa$ and $c^0$. In particular, $\varepsilon_1 (\kappa, c^0) \geq \varepsilon_1 (M (\kappa),c^0)$ for any $\kappa >0$.
\end{remark}
\begin{proof}
Let us begin by noting that, when $v_0 \equiv 0$, then $u$ satisfies a standard monostable type equation and therefore it is well known that $u(t,x+cte)$ converges to 1 as $t \to +\infty$ for any $x \in \mathbb{R}^N$ and $c \in [0, c^*)$. Therefore we only need to consider the case when $v_0$ is not trivial.

We then argue by contradiction by assuming there exist sequences
\begin{equation*}
\begin{split}
&\left\{\left(u_{0,n},v_{0,n}\right)\right\}_{n\geq 0} \in (C \cap B_X (0,\kappa))^{\mathbb{N}},\\
&\{c_n\}_{n\geq 0}\subset  [0,c^0], \mbox{ and } \{e_n\}_{n\geq 0}\subset S^{N-1},\\
&\{x_n\}_{n\geq 0} \subset \mathbb{R}^N, \mbox{ and $\{t_n\}_{n\geq 0}\subset [0,\infty)$ such that } t_n \to +\infty,
\end{split}
\end{equation*}
such that $u_{0,n} , v_{0,n} \not \equiv 0$ and one of the three following options holds true:
\begin{equation}\label{eq:lem1:ineq120}
\forall t \geq t_n, \ u_n (t,x_n+c_n t e_n) \leq  \frac{1}{n},
\end{equation}
\begin{equation}\label{eq:lem1:ineq12}
\forall t \geq t_n, \ u_n (t,x_n+c_n t e_n) \geq 1 - \frac{1}{n},
\end{equation}
or
\begin{equation}\label{eq:lem1:ineq11}
\forall t \geq t_n, \ v_n (t,x_n+c_n t e_n) \leq \frac{1}{n},
\end{equation}
wherein $(u_n ,v_n)$ denotes the solution of \eqref{1.1} with initial data $\left(u_{0,n} , v_{0,n}\right)$. Note that without loss of generality, one may assume that
$$c_n \to c_\infty \in [0,c^0] \mbox{ and } e_n \to e_\infty \in S^{N-1}.$$

Let us first show that~\eqref{eq:lem1:ineq120} implies~\eqref{eq:lem1:ineq11}. Choose any sequence $s_n \geq t_n$. From the weak dissipativity assumption and standard parabolic estimates, possibly along a subsequence, the following convergence holds true
\begin{equation*}
\begin{cases}
\displaystyle \lim_{n\to \infty}v_n (s_n + t,x_n+c_n (s_n+t) e_n +x) \to v_\infty (t,x),\\
\displaystyle\lim_{n\to\infty}u_n (s_n +t,x_n+c_n (s_n+t) e_n + x) \to u_\infty (t,x).
\end{cases}
\end{equation*}
The above convergence is locally uniform in $(t,x)\in\R\times\R^N$ and the limit function $(u_\infty,v_\infty)$ is an entire solution of the following system, which is the same as \eqref{1.1} but with some additional drift term due to the fact we are looking at some moving frames:
\begin{equation}\label{1.1.drift}
\begin{split}
&\left(\partial_t -d\Delta\right)u_\infty= c_\infty \nabla u_\infty \cdot e_\infty+  u_\infty F\left(u_\infty , v_\infty\right),\\
&\left(\partial_t-\Delta \right)v_\infty = c_\infty  \nabla v_\infty \cdot e_\infty +v_\infty G\left(u_\infty, v_\infty\right).
\end{split}
\end{equation}
Next it is clear that $u_\infty \geq 0$ and we infer from \eqref{eq:lem1:ineq120} that $u_\infty (0,0) =0$. By the strong maximum principle, we get that $u_\infty \equiv 0$, hence $v_\infty$ satisfies
$$\partial_t v_\infty = \Delta v_\infty + c_\infty \nabla v_\infty \cdot e_\infty + v_\infty G\left(0,v_\infty \right).$$
It is clear, from the monotonicity of $G$, that for any $t_0\in\R$, the function $(t,x) \mapsto M (\kappa) e^{-G(0,0) (t+t_0)}$ is a super-solution of the same equation, for any $t > -t_0$. Since $v_\infty (-t_0 ,x) \leq M(\kappa)$ for any $t_0 \in \R_+$, it follows that
$$v_\infty (0,x) \leq M(\kappa) e^{-G(0,0) t_0}.$$
Passing to the limit as $t_0 \to +\infty$, we get that $v_\infty (0,x) \equiv 0$. Therefore, $v_n (s_n,x_n+c_n s_n e_n) \to 0$ as $n \to +\infty$ and, because the choice of the sequence $s_n$ was arbitrary, this means that~\eqref{eq:lem1:ineq11} holds.\\

Next we claim that 
\begin{claim}\label{Claim1}
In both cases, that is either \eqref{eq:lem1:ineq12} or \eqref{eq:lem1:ineq11} holds, then 
there exists a sequence $\{t_n'\}_{n\geq 0}$ such that $t_n'\geq t_n$ and
for any $R >0$,
\begin{equation}\label{eq:lem1:ineq22}
\lim_{n\to\infty} \sup_{t\geq 0,\;x\in B_R}|1-u_n (t_n' +t,x_n+ c_n (t_n' +t) e_n +x)|=0.
\end{equation}
Here $B_R$ denotes the closed ball of radius $R$ centred at $0$.
\end{claim}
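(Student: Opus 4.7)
The plan is to treat the two hypotheses \eqref{eq:lem1:ineq12} and \eqref{eq:lem1:ineq11} separately; in both cases the end goal is to pass to a limit in a suitable moving frame and apply the strong maximum principle to conclude that the limit of $u_n$ is identically~$1$ on $\R\times\R^N$.

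Under \eqref{eq:lem1:ineq12}, I would take $t_n' := t_n$ and introduce $\hat u_n(t,x) := u_n(t_n+t,\, x_n+c_n(t_n+t)e_n+x)$ together with $\hat v_n$ defined analogously. By Assumption~\ref{ASS-U} and standard parabolic estimates, along a subsequence these converge locally uniformly to an entire solution $(\hat u_\infty, \hat v_\infty)$ of the drifted system \eqref{1.1.drift}. The hypothesis yields $\hat u_\infty(t,0)=1$ for all $t\geq 0$, so the strong maximum principle applied to $1-\hat u_\infty\geq 0$ (exploiting $F(1,v)\leq F(1,0)=0$ from Assumption~\ref{ASS-F}) forces $\hat u_\infty\equiv 1$ on $(-\infty,0]\times\R^N$, and forward uniqueness of the Cauchy problem extends this to all of $\R\times\R^N$. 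The upgrade from locally uniform convergence to uniform convergence on $[0,\infty)\times B_R$ then proceeds by contradiction: a failure would provide $s_n\to\infty$ and $z_n\to z_\infty$ with $|1-\hat u_n(s_n,z_n)|\geq\eta$, and a secondary time shift by $s_n$ produces a limit still equal to~$1$ along the path $t\mapsto -z_\infty$ (since \eqref{eq:lem1:ineq12} holds along the untranslated path $y_n$), which by the same strong maximum principle is identically~$1$, contradicting $|1-\hat u_\infty(0,0)|\geq\eta$.

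Under \eqref{eq:lem1:ineq11}, the hypothesis $v_n(s,y_n(s))\leq 1/n$ for $s\geq t_n$ gives directly $\hat v_\infty(t,0)=0$ for every $t\in\R$ in any subsequential moving-frame limit, and the strong maximum principle then yields $\hat v_\infty\equiv 0$. Consequently any such limit $\hat u_\infty$ is an entire bounded solution of the scalar drifted KPP equation $\partial_t u = d\Delta u + c_\infty\nabla u\cdot e_\infty + u F(u,0)$ on $\R\times\R^N$. The crux is to choose $t_n'\geq t_n$ so that $u_n(t_n',\, x_n + c_n t_n' e_n) \to 1$, for once this is achieved the argument of case \eqref{eq:lem1:ineq12} applies with only minor modifications. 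I would construct such a $t_n'$ via a Liouville-type property: removing the drift through $\tilde u(t,y) := \hat u_\infty(t, y + c_\infty t e_\infty)$ yields an entire solution of the non-drifted KPP equation, and the classical spreading argument of Aronson--Weinberger, valid because $|c_\infty|\leq c^0 < c^*$, shows that any bounded entire solution satisfying $\hat u_\infty(t,0)\leq 1-\eta$ for all $t$ must vanish identically. If no suitable $t_n'$ existed, this Liouville property would force $\hat u_\infty\equiv 0$ in every shifted moving frame; this degenerate scenario must then be ruled out using the scalar upper bound $u_n\leq\bar u_n$, the hair-trigger spreading of $\bar u_n$, and a comparison between $u_n$ and $\bar u_n$ localised near the moving path where $v_n$ is small.

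The main obstacle, in my view, is precisely this final exclusion of the degenerate scenario in case \eqref{eq:lem1:ineq11}: it requires quantitatively propagating the smallness of $v_n$ along the moving path into a genuine lower bound for $u_n$, rather than merely invoking pointwise or global estimates. The other ingredients — moving-frame compactness via weak dissipativity, the Liouville argument for the scalar drifted KPP equation, and the strong maximum principle — are standard.
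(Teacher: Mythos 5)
Your treatment of the case~\eqref{eq:lem1:ineq12} is correct and matches the paper's own argument: fix $t_n'=t_n$, argue by contradiction, pass to a moving-frame limit $(u_\infty,v_\infty)$ solving~\eqref{1.1.drift}, observe $u_\infty\le 1$ and $u_\infty(0,0)=1$, and invoke the strong maximum principle to force $u_\infty\equiv 1$, contradicting the hypothesized drop by~$\delta$.

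In the case~\eqref{eq:lem1:ineq11}, however, you have an acknowledged gap where the paper does the real work, and your proposed tool is not quite the right one. You want a Liouville statement of the form ``a bounded entire solution of the drifted scalar KPP equation with $\hat u_\infty(t,0)\le 1-\eta$ for all $t$ must vanish identically''; that is not what Aronson--Weinberger gives, and it is not what the paper uses. The paper's Lemma~\ref{lem:monostable_stationary} is a Liouville lemma for \emph{strictly positive bounded stationary} solutions of the drifted equation, and it enters the proof only at the very end. The heart of the paper's argument is constructive and proceeds in a different order: (1) upgrade the hypothesis $v_n(t,y_n(t))\le 1/n$ to \emph{uniform} smallness of $v_n$ on the whole moving ball $[0,\infty)\times B_R$ (this is the intermediate claim~\eqref{eq:lem1:ineq21}, again via compactness and the strong maximum principle applied to $v$); (2) use this to dominate $v_n$ in the moving frame by $\overline v(x)=M(\kappa)\chi_{\R^N\setminus B_R}+\delta\chi_{B_R}$, so that $u_n$ is, by partial comparison, a super-solution of the scalar drifted equation $\partial_t\underline u^n = d\Delta\underline u^n + c_n\nabla\underline u^n\cdot e_n + \underline u^n F(\underline u^n,\overline v)$; (3) since $u_n$ is nontrivial, plant a compactly supported sub-solution $\psi_0(x;\eta)=\eta\,e^{-\frac{c_n}{2d}x\cdot e_n}\phi_R(x)$ under $\underline u^n$ (this is where $c^0<2\sqrt{dF(0,0)}$ is used), which evolves monotonically to a positive stationary state $p_{n,R,\delta}$, shown to be independent of $\eta$; (4) send $n\to\infty$, $R\to\infty$, $\delta\to 0$ and only \emph{then} apply the Liouville lemma to conclude the limiting stationary state is $\equiv 1$. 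This scheme sidesteps the degenerate scenario $\hat u_\infty\equiv 0$ that you flag as problematic: rather than trying to exclude it \emph{a posteriori} by comparing with $\overline u_n$, the paper produces, for every $n$, a \emph{quantitative, $n$-uniform} lower bound $p_{n,R,\delta}$ for $\liminf_t u_n$ in the moving ball, directly from the nontriviality of $u_n$ and the sub-solution. Your final sentence correctly diagnoses where the difficulty lies; the missing idea is precisely the uniform bound~\eqref{eq:lem1:ineq21} on $v_n$ over the moving ball (not just the moving path) combined with the compactly supported sub-solution construction, which is the hair-trigger mechanism in this drifted setting.
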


The proof of this claim is split into two parts corresponding to the two cases \eqref{eq:lem1:ineq12} and \eqref{eq:lem1:ineq11}.\\

\begin{proof}[Proof of Claim~\ref{Claim1}]
Let us first consider the case when \eqref{eq:lem1:ineq12} holds true. 
We will show that Claim \ref{Claim1} holds true with the sequence $\{t_n'=t_n\}$. 
To do so we proceed by contradiction, and assume that for some $R>0$, there exist $\delta >0$, sequences $s_n \geq t_n$ and $x'_n \in B_R$ such that
\begin{equation}\label{eqplus}
u_n (s_n,x_n+c_n s_n e_n + x'_n) \leq 1- \delta.
\end{equation}
Using again the weak dissipativity assumption and standard parabolic estimates, we extract a converging subsequence
\begin{equation*}
\begin{cases}
\displaystyle \lim_{n\to \infty}v_n (s_n + t,x_n+c_n (s_n+t) e_n +x) \to v_\infty (t,x),\\
\displaystyle\lim_{n\to\infty}u_n (s_n +t,x_n+c_n (s_n+t) e_n + x) \to u_\infty (t,x),
\end{cases}
\end{equation*}
where the limit function $(u_\infty,v_\infty)$ is an entire solution of~\eqref{1.1.drift}. Next it is clear that $u_\infty \leq 1$ and we infer from \eqref{eq:lem1:ineq12} that $u_\infty (0,0) =1$. Thus $u_\infty \equiv 1$ (as well as $v_\infty \equiv 0$) by the strong maximum principle. But, since the sequence $\{x_n'\}\subset B_{R}$ is relatively compact, \eqref{eqplus} yields 
$$ u_\infty (0,x'_\infty) \leq 1 - \delta,$$
where $x'_\infty$ is an accumulation point of $\{x_n'\}$.
Hence we have reached a contradiction and Claim \ref{Claim1} holds true under condition \eqref{eq:lem1:ineq12}.\\

Let us now consider the case when \eqref{eq:lem1:ineq11} holds true. We begin by checking that for any $R>0$
\begin{equation}\label{eq:lem1:ineq21}
\lim_{n\to\infty} v_n (t_n +t,x_n+c_n (t_n+t) e_n +x)=0\text{ uniformly on } [0,\infty)\times B_R.
\end{equation}
The argument is the same as above. Indeed, if this is not true, then for some $R>0$, there exist $\delta >0$, $s_n \geq t_n$ and $x'_n \in B_R$ such that
$$v_n (s_n,x_n+c_n s_n e_n + x'_n) \geq \delta.$$
Once again due to weak dissipativity assumption and standard parabolic estimates
one may assume, possibly along a subsequence, that as $n\to\infty$:
\begin{equation*}
\begin{cases}
v_n (s_n + t,x_n+c_n (s_n+t) e_n +x) \to v_\infty (t,x),\\
u_n (s_n +t,x_n+c_n (s_n+t) e_n + x) \to u_\infty (t,x),
\end{cases}
\end{equation*}
where the above convergence holds locally uniformly and wherein $(u_\infty,v_\infty)$ is an entire solution of \eqref{1.1.drift}. Using the strong maximum principle and \eqref{eq:lem1:ineq11}, one can check that $v_\infty \equiv 0$, which contradicts the fact that $ v_\infty (0,x'_\infty) >\delta,$ where $x'_\infty$ is an accumulation point of the sequence $\{x'_n\}_{n\geq 0}$. We have proved~\eqref{eq:lem1:ineq21}.

We can proceed with the proof of~\eqref{eq:lem1:ineq22}. We have just shown that for any $R >0$ and $\delta >0$, then for any $n$ large enough one has for each $t\geq 0$ and $x\in\mathbb{R}^N$:
\begin{equation*}
v_n (t_n+t,x_n+c_n (t_n+t)e_n+x) \leq M (\kappa) \chi_{\mathbb{R}^N \setminus B_R } (x) + \delta \chi_{B_R} (x) =: \overline{v} (x).
\end{equation*}
Here, let us recall that $M(\kappa)$ is the weak dissipativity constant.

Next for each $R>0$, $\delta>0$ and $n$ large enough, we infer from the comparison principle and the monotonicity of $F$ with respect to $v$ that 
\begin{equation}\label{esti-inf}
u_n (t_n+t,x_n +c_n (t_n+t)e_n+x) \geq \underline{u}^n (t,x),\;\;\forall t\geq 0,\;x\in\mathbb{R^N},
\end{equation}
where $\underline{u}^n$ is the solution of
\begin{equation}\label{eq:underline_u}
\left\{
\begin{array}{l}
\left(\partial_t -d\Delta\right) \underline{u}^n= c_n \nabla \underline{u}^n \cdot e_n+  \underline{u}^n F\left( \underline{u}^n , \overline{v}\right),\vspace{3pt}\\
\underline{u}^n (0,x) := u_n (t_n,x_n+ c_n t_n e_n +x).
\end{array}
\right.
\end{equation}
For each $R>0$ let us denote by $\phi_R$ the principal (hence positive) eigenfunction of
$$
\left\{
\begin{array}{l}
\Delta \phi_R = \mu_R \phi_R \mbox{ in } B_R,\vspace{3pt}\\
\phi_R = 0 \mbox{ on } \partial B_R,
\end{array}
\right.
$$
normalized so that $\|\phi_R\|_\infty =1$, and extend it by $0$ outside of the ball $B_R$. Then, let us define the function $\psi_0(x;\eta)$, for each $\eta >0$, as 
\begin{equation*}
\psi_0(x; \eta) = \eta e^{-\frac{c_n}{2d} (x \cdot e_n )} \phi_R (x).
\end{equation*}
Recall also that for any $n\geq 0$, $0 \leq c_n \leq c^0 < \max \{c^*,c^{**} \}$ and thus, under assumptions of both Theorems~\ref{THEO1} and~\ref{THEO2} (see also Remark~\ref{rmk:KPP3}), $c^0 < 2 \sqrt{dF(0,0)}$.
Then using the same computations as in the previous section, one can check that there exists $\eta_0$ depending only on $F$ and $c^0$ such that, for any $\delta$ small enough, $0< \eta \leq \eta_0$ and $R$ large enough, $\psi_0$ satisfies for each $n\geq 0$:
\begin{equation*}
-d\Delta \psi_0 - c_n \nabla \psi_0 \cdot e_n -  \psi_0 F\left( \psi_0 , \delta \right) \leq 0\text{ in }B_R.
\end{equation*}
Moreover since ${\rm supp}\;\psi_0\subset B_R$, $\psi_0$ is also a sub-solution of equation \eqref{eq:underline_u} satisfied by $\underline{u}^n$.

Therefore, the solution, denoted by $\psi(t,x;\eta)$, of \eqref{eq:underline_u} associated with initial data $\psi(0,\cdot;\eta)=\psi_0 (\cdot;\eta)$ is increasing in time, and converges to some positive stationary solution that we denote by $p_{n,R,\delta}$. Let us check that it does not depend on the choice of $\eta \in (0, \eta_0]$. For this purpose, let us slightly change our notation for simplicity and denote this stationary solution by $p_\eta$, while $n$, $R$ and $\delta$ are fixed for the time being. First note that the comparison principle implies that $p_\eta \leq p_{\eta'}$ for any $\eta,\eta' \in (0,\eta_0]$ such that $\eta\leq \eta'$.
Next let us assume by contradiction that there exists $\eta_1<\eta_0$ with $p_{\eta_1}\not\equiv p_{\eta_0}$.
Hence the strong maximum principle implies that $p_{\eta_1}<p_{\eta_0}$. Moreover there exists a point $x_0\in B_R$ such that
\begin{equation*}
\psi(0,x_0;\eta_0)>p_{\eta_1}(x_0).
\end{equation*}
Indeed if not then $\psi(0,x;\eta_0)\leq p_{\eta_1}(x)$ for all $x\in\R^N$, which yields $p_{\eta_0}(x)\leq p_{\eta_1}(x)$, a contradiction.
Now consider $$\eta^*=\sup\{\eta\geq \eta_1:\;\psi(0,x;\eta)\leq p_{\eta_1}(x),\;\forall x\in\R^N\}.$$
Then one deduces from the comparison principle and the strong maximum principle that
\begin{equation*}
\psi(0,x;\eta^*)<\psi(t,x;\eta^*)<p_{\eta_1}(x),\;\forall t>0,\;x\in\R^N.
\end{equation*}
On the other hand, from the definition of $\eta^*$ and recalling that the functions $\psi$ have compact support $B_R$, there exists $x_0\in B_R$ such that $\psi(0,x_0;\eta^*)=p_{\eta_1}(x_0)$, a contradiction.
Hence it follows that $p_\eta \equiv p_{\eta_0}$, for all $\eta\in (0,\eta_0]$. As announced, it does not depend on $\eta$ and we now denote it by $p_{n,R,\delta}$.

Now, thanks to the fact that $u_n$ is not trivial, we can choose $\eta$ sufficiently small so that $\underline{u}^n (0,x) \geq \psi (0,x;\eta)$ for all $x\in\R^N$. Then it follows from \eqref{esti-inf} that for any $R>0$ large enough, $\delta>0$ small enough and $n$ large enough:
\begin{equation}\label{eq:liminf_pnrd}
\liminf_{t \to +\infty} u_n (t_n+t,x_n+c_n(t_n+t)e_n+x) \geq p_{n,R,\delta}(x),\;\;\forall x\in\mathbb{R}^N.
\end{equation}
To complete the proof of Claim \ref{Claim1},
it remains to check that $p_{n,R,\delta}$ is close enough to 1 as $n$ and $R$ are large and $\delta$ is small.

Since $p_{n,R,\delta}$ is also bounded from above by 1, one can use standard elliptic estimates to get that, as $n \to +\infty$, $R \to +\infty$ and $\delta \to 0$, the function $p_{n,R,\delta}$ converges (eventually up to extraction of a subsequence) locally uniformly to a stationary solution $p_{\infty}$ of 
\begin{equation}\label{eq:monostable_drift}
d \Delta p_{\infty} + c_\infty \nabla p_{\infty}  \cdot e_\infty + p_{\infty}  F (p_{\infty} ,0)=0.
\end{equation}
Moreover, recall that, since the map $t\mapsto \psi(t,x;\eta_0)$ is nondecreasing, $p_{n,R,\delta} (0) \geq \psi(0,0;\eta_0)\geq \eta_0 \phi_R (0)$. Now note that $\phi_R \to 1$ locally uniformly as $R \to +\infty$ (indeed $\phi_R (x) \equiv \phi_1\left(R^{-1} x \right)$), hence $p_{\infty} (0) \geq \eta_0$ and the strong maximum principle provides $p_\infty >0$. To conclude we shall make use of the following lemma for the monostable equation:
\begin{lemma}\label{lem:monostable_stationary}
Let $p=p(x)$ be a stationary of \eqref{eq:monostable_drift} such that $0<p(x)\leq 1$ for all $x\in\R^N$. Then $p(x)=1$, $\forall x\in\R^N$.
\end{lemma}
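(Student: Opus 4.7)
The plan is to convert this elliptic Liouville-type statement into a parabolic spreading statement via a moving-frame change of variables, and then invoke Proposition~\ref{LE1}. I would first set $w(t,x) := p(x - c_\infty t\, e_\infty)$ and check by direct differentiation that $w$ is an entire solution of the drift-free scalar monostable equation
$$\partial_t w = d\Delta w + w F(w,0),$$
with $0 < w \leq 1$: the drift term in \eqref{eq:monostable_drift} is precisely cancelled by $\partial_t w = -c_\infty e_\infty \cdot \nabla p$, so that the stationary equation for $p$ becomes the drift-free parabolic equation above for $w$.

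Next, I would exhibit a nontrivial compactly supported initial datum lying below $p$ that spreads to $1$. Since $p$ is continuous and strictly positive on $\R^N$, one has $\delta_R := \inf_{B_R} p > 0$ for every $R>0$, so we may pick a continuous $w_0 \not\equiv 0$ with $\mathrm{supp}\,w_0 \subset B_R$ and $0 \leq w_0 \leq \delta_R$, ensuring $w(0,\cdot) = p \geq w_0$. Let $\tilde w$ denote the solution of the drift-free scalar equation above with initial datum $w_0$; the parabolic comparison principle yields $w(t,x) \geq \tilde w(t,x)$ for all $t\geq 0$ and $x\in\R^N$. Since $F(\cdot,0)$ is of the monostable type on $[0,1]$ by Assumption~\ref{ASS-F}~(b), Proposition~\ref{LE1} provides a spreading speed $c^{\sharp} := c^*(d,F(\cdot,0)) \geq 2\sqrt{dF(0,0)}$ such that $\tilde w(t,x) \to 1$ uniformly on $\|x\| \leq ct$ as $t\to \infty$ for every $c<c^{\sharp}$.

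The conclusion then follows by observing that $c_\infty$ is strictly outpaced by this spreading speed. Indeed, the setting of this section (see \eqref{eq:speed_inner} in the KPP case and Remark~\ref{rmk:KPP3} otherwise) gives $c_\infty \leq c^0 < 2\sqrt{dF(0,0)} \leq c^{\sharp}$, so I would fix some $c' \in (c_\infty, c^{\sharp})$. Given any $y\in\R^N$ and $\varepsilon>0$, take $T$ large enough so that both $\|y\| \leq (c'-c_\infty)T$ and $\tilde w(T,\cdot) \geq 1-\varepsilon$ on $B_{c'T}$ hold. The first condition ensures $\|y + c_\infty T e_\infty\| \leq \|y\| + c_\infty T \leq c' T$, so that
$$p(y) \,=\, w(T, y + c_\infty T e_\infty) \,\geq\, \tilde w(T, y + c_\infty T e_\infty) \,\geq\, 1 - \varepsilon.$$
Since $y\in\R^N$ and $\varepsilon>0$ are arbitrary and $p \leq 1$ by hypothesis, this forces $p \equiv 1$. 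The only delicate points are the correct sign in the moving-frame substitution (which is $x - c_\infty t e_\infty$, not $x + c_\infty t e_\infty$) and the strict inequality $c_\infty < 2\sqrt{dF(0,0)}$ that guarantees we can reach an arbitrary $y$ within the already-saturated part of the $\tilde w$-front; both are directly supplied by the hypotheses of the section.
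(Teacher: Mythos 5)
Your proof is correct and follows essentially the same route as the paper: pass to the moving frame $w(t,x)=p(x-c_\infty t e_\infty)$, note that $w$ solves the drift-free monostable equation and is invariant in the sense $w(t,x+c_\infty t e_\infty)=p(x)$, and then use Proposition~\ref{LE1} together with $c_\infty\le c^0<2\sqrt{dF(0,0)}\le c^\sharp$ to conclude $p\equiv 1$. You also make explicit a step the paper's terse proof leaves implicit, namely inserting a nontrivial compactly supported sub-solution datum below $p$ before invoking the spreading result, since $p$ itself is not compactly supported.
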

\begin{proof}[Proof of Lemma~\ref{lem:monostable_stationary}]
To check this, let $p\equiv p(x)$ be a stationary solution of \eqref{eq:monostable_drift} with $0<p(x)\leq 1$ for any $x\in\mathbb R^N$. Then note that the map $U(t,x):=p\left(x-c_\infty t e_\infty\right)$ satisfies 
\begin{equation*}
\partial_t U-d\Delta U=U F\left(U,0\right).
\end{equation*}
Next since $U(0,x)=p(x)>0$ and $c_\infty\in [0,c^*)$, one can make use of Proposition~\ref{LE1} to conclude that for each $x\in\mathbb R^N$ one has $U(t,x+c_\infty t e_\infty)\to 1$ as $t\to\infty$. Now note that $U(t,x+c_\infty t e_\infty)=p(x)$ for any $t\geq 0$ and $x\in\mathbb R^N$, that completes the proof of the lemma.
\end{proof}\\

The above lemma immediately implies that $p_\infty = 1$. Now, for any $R' >0$ and $\delta' >0$, one can choose $R$ and $\delta$ so that, for any $n$ large enough,
$$p_{n,R,\delta} (x) \geq 1 - \delta' \mbox{ for any } x \in B_{R' }.$$ 
Along with \eqref{eq:liminf_pnrd}, this completes the proof of Claim \ref{Claim1}.
\end{proof}\\

We can now go back to the proof of Lemma~\ref{lem:1}, by observing that \eqref{eq:lem1:ineq22} leads to a contradiction. The argument below is very similar to the one we just used to prove \eqref{eq:lem1:ineq22}, so that we omit some details.

First, to ease the notations, let us note that we can assume without loss of generality that $t'_n = t_n$ for all $n \in \mathbb{N}$. We now know that for any $R >0$ and $\delta >0$, then for any $n$ large enough, any $t\geq 0$ and $x\in\R^N$:
\begin{equation*}
u_n (t_n+t,x_n+c_n (t_n+t)e_n+x) \geq (1 -\delta) \chi_{B_R } (x) =: \underline{u} (x).
\end{equation*}
Then one infers from the comparison principle that 
\begin{equation*}
v_n (t_n+t,x_n +c_n (t_n+t)e_n+x) \geq \underline{v}^n (t,x),\;\forall t\geq 0,\;x\in\R^N,
\end{equation*}
wherein we have set $\underline{v}^n$, the solution of
\begin{equation}\label{eq:underline_v}
\left\{
\begin{array}{l}
\left(\partial_t - \Delta\right) \underline{v}^n= c_n \nabla \underline{v}^n \cdot e_n+  \underline{v}^n G\left( \underline{u} , \underline{v}^n\right),\vspace{3pt}\\
\underline{v}^n (0,x) = v_n (t_n,x_n+ c_n t_n e_n +x).
\end{array}
\right.
\end{equation}
Once again, the function $\psi_0 (x) = \eta e^{-\frac{c_n}{2} x \cdot e_n} \phi_R (x)$ is a sub-solution of the equation above, provided that $R$ is large and $\eta$ small, using the fact that $c^0 < c^{**}$. Thanks to the comparison principle, it follows, up to reducing $\eta$, that
\begin{equation}\label{eq:comp_vpsi}
v_n (t_n + t, x_n+c_n (t_n+t)e_n+x) \geq \underline{v}^n (t,x) \geq \psi (t,x),\;\forall t\geq 0,\;x\in\R^N,
\end{equation}
wherein $\psi=\psi(t,x)$ denotes the solution of \eqref{eq:underline_v} with initial data $\psi_0$. 
On the other hand, the function $\psi=\psi(t,x)$ again converges to some positive stationary solution, because it is increasing in time and bounded from above by $M(\kappa)$. As before, it can be proved that this positive stationary solution does not depend on small $\eta$, and we denote it by~$q_{n,R,\delta}$. 

Moreover, because it is bounded from above by $M(\kappa)$ and using standard elliptic estimates, $q_{n,R,\delta}$ converges as $n \to +\infty$, $R \to +\infty$ and $\delta \to 0$ (up to extraction of a subsequence and without loss of generality) to a stationary state $q_\infty$ of
$$\Delta q_{\infty} + c_\infty \nabla q_{\infty}  \cdot e_\infty + q_{\infty}  G (1,q_{\infty,})=0,$$
which is bounded and also positive thanks to the fact that $\phi_R \to 1$ locally uniformly as $R \to +\infty$. In fact, one even gets that
$$\inf_{x \in \mathbb{R}^N} q_\infty (x) >0,$$
with the same argument as for Lemma~\ref{lem:monostable_stationary}.

Note that for some $G$, namely such that $G(1,v) >0$ for any $v \geq 0$, this is already a contradiction as bounded positive stationary states do not even exist. However, such an equilibrium may exist in general under our assumptions, so that some more work is needed to reach the contradiction.

By inequality \eqref{eq:comp_vpsi}, 
$$\liminf_{t \to +\infty} v_n (t_n+t,x_n+c_n (t_n+t)e_n+x) \geq \frac{\inf q_\infty }{2}>0,$$
on the ball $B_R$, where $R >0$ can be chosen arbitrarily large, provided that $n$ is large enough. 
But due to the above lower estimate, one gets
$$\limsup_{t \to +\infty} u_n (t_n+t,x_n+c_n (t_n+t)e_n+x) \leq \lim_{t \to +\infty} \overline{u}_n (t,x),$$
wherein $\overline{u}_n$ is the solution of
$$
\left\{
\begin{array}{l}
\left(\partial_t - d\Delta\right) \overline{u}_n= c_n \nabla \overline{u} \cdot e_n+  \overline{u}_n F\left( \overline{u}_n , \frac{\inf q_\infty}{2} \chi_{B_R} (x) \right),\vspace{3pt}\\
\overline{u}_n (0,x) =1.
\end{array}
\right.
$$
Note that the limit of $\overline{u}_n$ as $t \to +\infty$ is well-defined since 1 is a super-solution, hence $\overline{u}_n$ decreases with respect to time. Furthermore, it can easily be checked with an argument similar to the above that this limit stays locally away from 1, uniformly with respect to $n$. Indeed, up to extraction of a subsequence, $\overline{u}_n(t,x)$ converges locally uniformly to the solution $\overline{u}_\infty(t,x)$ of the same problem where $c_n$ and $e_n$ are replaced respectively by $c_\infty$ and $e_\infty$. Because 1 still is a strict super-solution, one obtains that $\overline{u}_\infty (1,0) <1$.
Therefore
$$\limsup_{n \to \infty} \overline{u}_n (1,0) < 1,$$
and the monotonicity with respect to time yields 
$$\limsup_{n \to\infty} \lim_{t\to +\infty} \overline{u}_n (t,0) <1.$$
This contradicts Claim~\ref{Claim1} and the proof of the lemma is complete. \end{proof}

\subsection{Second step: Pointwise spreading}

We now prove the following improvement, that states that the solution spreads in any moving frame with constant speed uniformly with respect to bounded set of initial data:

\begin{lemma}\label{lem:2}
Let $\kappa >0$ be given. Then there exists $ \varepsilon_2 (\kappa, c^0) >0$ such that, for any $(u_0,v_0) \in C\cap B_X (0,\kappa)$ with $u_0 \not \equiv 0$ and $v_0 \not \equiv 0$, for all $c \in [0,c^0]$, $e \in S^{N-1}$ and any $x\in\mathbb R^N$:
$$\left\{
\begin{array}{l}
\displaystyle \liminf_{t \to +\infty} v (t,x+cte) \geq \varepsilon_2 (\kappa, c^0),\vspace{2pt}\\
\displaystyle \liminf_{t \to +\infty} u (t,x+cte) \geq \varepsilon_2 (\kappa, c^0),\vspace{2pt}\\
\displaystyle \limsup_{t \to +\infty} u (t,x+cte) \leq 1 -\varepsilon_2 (\kappa ,c^0) .
\end{array}
\right.
$$
\end{lemma}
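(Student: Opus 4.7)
The plan is to upgrade the weak (limsup) spreading provided by Lemma~\ref{lem:1} to uniform (liminf) spreading, using an argument in the spirit of the Hale--Waltman theory of uniform persistence, adapted to the translation-invariant parabolic setting. I focus on the lower bound $\liminf v \geq \varepsilon_2$; the lower bound $\liminf u \geq \varepsilon_2$ follows by a parallel argument based on the weak persistence of $u$ in Lemma~\ref{lem:1}, and the upper bound $\limsup u \leq 1 - \varepsilon_2$ is obtained from a similar dichotomy applied to $w := 1-u$, using Assumption~\ref{ASS-F}~$(a)$ which gives $F(1, v) < 0$ whenever $v > 0$.

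By weak dissipativity (Assumption~\ref{ASS-U}), every orbit issued from $C \cap B_X(0,\kappa)$ remains in $C \cap B_X(0, M(\kappa))$ for all time, so Lemma~\ref{lem:1} applies with the enlarged constant $\varepsilon^* := \varepsilon_1(M(\kappa), c^0)$. Arguing by contradiction, assume that there exist nontrivial $(u_{0,n}, v_{0,n}) \in C \cap B_X(0,\kappa)$, $c_n \in [0, c^0]$ with $c_n \to c_\infty$, $e_n \in S^{N-1}$ with $e_n \to e_\infty$, $x_n \in \R^N$, and $t_n \to \infty$ with $v_n(t_n, x_n + c_n t_n e_n) \to 0$. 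The limsup bound of Lemma~\ref{lem:1} provides, for each $n$, a minimal time $\tau_n > t_n$ at which the value of $v_n$ along the moving frame equals $\varepsilon^*/2$, with $v_n < \varepsilon^*/2$ on $(t_n, \tau_n)$.

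The core of the proof is an iterated dichotomy over decreasing thresholds $\alpha^{(k)} := \varepsilon^*/2^{k+1}$ for $k \geq 1$, where $\mu_n^{(k)}$ denotes the first time after $t_n$ at which $v_n$ along the moving frame equals $\alpha^{(k)}$ (set $\mu_n^{(0)} := \tau_n$); by construction $\mu_n^{(k)} < \mu_n^{(k-1)}$ and $v_n < \alpha^{(k-1)}$ on $[t_n, \mu_n^{(k-1)})$. Three alternatives may occur at each level. (a) If $\mu_n^{(k)} - t_n$ is bounded along a subsequence, centering the moving frame at $t_n$ and using parabolic regularity yields an entire solution $(\hat u_\infty, \hat v_\infty)$ of~\eqref{1.1.drift} with $\hat v_\infty(0, 0) = 0$, hence $\hat v_\infty \equiv 0$ by the strong maximum principle; but then $\hat v_\infty(T, 0) = \alpha^{(k)} > 0$ at $T := \lim(\mu_n^{(k)} - t_n) \geq 0$, a contradiction. (b) If $\mu_n^{(k-1)} - \mu_n^{(k)} \to \infty$, centering the frame at $\mu_n^{(k)}$ yields an entire solution $(\bar u_\infty, \bar v_\infty)$ of~\eqref{1.1.drift} satisfying $\bar v_\infty(s, 0) \leq \alpha^{(k-1)}$ for every $s \in \R$, the bound in the past coming from $v_n < \alpha^{(k)}$ on $(t_n, \mu_n^{(k)})$ and in the future from $v_n < \alpha^{(k-1)}$ on $(\mu_n^{(k)}, \mu_n^{(k-1)})$. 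The strong maximum principle gives $\bar v_\infty > 0$ everywhere; moreover $\bar u_\infty \not\equiv 0$, for otherwise $\bar v_\infty$ would be a positive bounded entire subsolution of a linear equation of principal coefficient $G(0, 0) < 0$, which necessarily grows unboundedly as $s \to -\infty$. Lemma~\ref{lem:1} applied to the initial data $(\bar u_\infty(0, \cdot), \bar v_\infty(0, \cdot)) \in C \cap B_X(0, M(\kappa))$ in the frame of speed $c_\infty$, direction $e_\infty$, then forces $\limsup_{s \to \infty} \bar v_\infty(s, 0) \geq \varepsilon^*$, contradicting $\bar v_\infty \leq \alpha^{(k-1)} < \varepsilon^*$. (c) Otherwise, the recursion passes to level $k+1$.

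If the recursion were to continue indefinitely, a diagonal extraction would provide a subsequence along which alternative~(c) holds at every level $k$. For each such $n$, the decreasing sequence $\{\mu_n^{(k)}\}_k$ is bounded below by $t_n > 0$, hence converges to some $m_n \geq t_n$, and continuity of $v_n$ in time gives $v_n(m_n, x_n + c_n m_n e_n) = \lim_k \alpha^{(k)} = 0$, contradicting the strong maximum principle which forces $v_n(m_n, \cdot) > 0$ everywhere since $v_{0,n} \not\equiv 0$ and $m_n > 0$. Thus the recursion terminates at some finite level, yielding a contradiction. The principal obstacle is concentrated in alternative~(b): securing the two-sided bound $\bar v_\infty \leq \alpha^{(k-1)}$ on the whole real line through the limiting procedure, which is what enables Lemma~\ref{lem:1} to produce a contradiction, and ruling out $\bar u_\infty \equiv 0$, which crucially relies on the sign condition $G(0, 0) < 0$ from Assumption~\ref{ASS-G}.
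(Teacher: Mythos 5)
Most of your ingredients coincide with the paper's strategy (argument by contradiction, parabolic limits along moving frames, the strong maximum principle, exclusion of $\bar u_\infty\equiv 0$ via $G(0,0)<0$, and re-application of Lemma~\ref{lem:1} with the enlarged constant $\varepsilon_1(M(\kappa),c^0)$ to the limiting entire solution), but the combinatorial core of your proof --- the iterated dichotomy over the dyadic levels $\alpha^{(k)}$ and, above all, its termination --- has a genuine gap. Your termination step fixes $n$ and lets $k\to\infty$ along $\{\mu_n^{(k)}\}_k$, concluding $v_n(m_n,\cdot)=0$ at $m_n=\lim_k\mu_n^{(k)}$. But for a fixed $n$ the value $v_n(t_n,x_n+c_nt_ne_n)$ is a fixed \emph{positive} number (by the very strong maximum principle you invoke), so the levels $\alpha^{(k)}$ lying below it are unusable: the properties you rely on ($\mu_n^{(k)}<\mu_n^{(k-1)}$ and $v_n<\alpha^{(k-1)}$ on $[t_n,\mu_n^{(k-1)})$) require $v_n(t_n,\cdot)<\alpha^{(k)}$, which holds only for finitely many $k$ per $n$. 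The diagonal extraction gives, for each fixed level $k$, information valid for $n$ large \emph{depending on} $k$; it cannot be converted into ``for one fixed $n$, all levels $k$'', so the limit over $k$ at fixed $n$, and hence the contradiction you draw from it, is not available.

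The defect is not merely presentational. Even if you reorganize and pass to the limit in $n$ first (centring at $\tau_n$ and setting $T_k=\lim_n(\tau_n-\mu_n^{(k)})$, finite whenever alternative (c) holds at level $k$), you only reach a contradiction when $\sup_k T_k<\infty$, in which case the limiting $\bar v$ vanishes at a finite backward time and the strong maximum principle applies. The scenario $T_k\to\infty$ --- $v$ climbing through every dyadic level with gaps bounded in $n$ but growing in $k$ --- produces an entire solution with $\bar v(s,0)\to 0$ as $s\to-\infty$ and $\bar v(0,0)=\varepsilon^*/2$, which contradicts nothing: Lemma~\ref{lem:1} constrains only the limsup as $s\to+\infty$, where you have no upper bound. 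The paper's proof avoids the recursion by orienting the excursion the other way: it chooses $t_n$ as a time where $v_n$ equals $\varepsilon/2$ along the frame and stays $\leq\varepsilon/2$ on $[t_n,t_n+s_n]$ with $v_n(t_n+s_n,\cdot)=1/n$ (a last down-crossing before the small value, whose existence follows from the limsup of Lemma~\ref{lem:1} combined with the assumed failure of the liminf bound). A single compactness/strong-maximum-principle argument centred at $t_n+s_n$ then forces $s_n\to+\infty$, and the limit centred at $t_n$ is an entire solution with $\tilde v(0,0)=\varepsilon/2>0$, $\tilde u(0,\cdot)\not\equiv 0$, and $\tilde v(t,c_\infty t e_\infty)\leq\varepsilon/2$ for all $t\geq 0$, which contradicts Lemma~\ref{lem:1} directly. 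If you replace your up-crossing, multi-level construction by this choice of crossing times, the rest of your argument (the limiting procedure, the exclusion of $\bar u_\infty\equiv 0$, the reapplication of Lemma~\ref{lem:1} in the drifting frame, and the deferral of the two $u$-bounds to analogous arguments) can be kept essentially as written.
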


Of course, the same remark as for Lemma~\ref{lem:1} holds here. Note also that the proof of this lemma uses ideas similar to what can be found in uniform persistence theory in dynamical systems.
We refer the reader for instance to Proposition 3.2 derived by Magal and
Zhao \cite{Magal-Zhao} and to the monograph of Smith and Thieme \cite{Smith-Thieme}.\\

\begin{proof}
We again proceed by contradiction to prove the first assertion, namely that $v$ spreads away from 0. We assume that there exist sequences $(u_{0,n},v_{0,n}) \in C \cap B_X (0,\kappa)$ (both components being nontrivial), $c_n \in [0,c^0]$, $e_n \in S^{N-1}$ and finally, $x_n \in \mathbb{R}^N$, such that
$$
\liminf_{t \to +\infty} v_n (t,x_n+c_nte_n) < \frac{1}{n}.
$$
We assume, without loss of generality, that
$$c_\infty = \lim_{n \to +\infty} c_n \in [0, c^0] \mbox{ and } e_\infty = \lim_{n \to +\infty} e_n \in S^{N-1}.$$
By the previous Lemma~\ref{lem:1}, there exist two sequences $t_n \to +\infty$ and $s_n \in \mathbb{R}_+$ such that for each $n\geq 0$
$$v_n (t_n,x_n + c_nt_n e_n) = \frac{\varepsilon}{2},$$
$$v_n (t,x_n + c_nt e_n) \leq \frac{\varepsilon}{2}, \   \ \forall t \in [t_n,t_n+s_n],$$
$$v_n (t_n + s_n,x_n + c_n( t_n +s_n) e_n) = \frac{1}{n},$$
where $\varepsilon = \varepsilon_1 (M(\kappa), c^0)$ is provided by Lemma \ref{lem:1}.

As before, possibly along a subsequence, the functions $u_n (t_n+s_n+t, c_n (t_n+s_n)e_n +x)$ and $v_n (t_n+s_n+t,x_n+c_n (t_n+s_n)e_n +x)$ converge locally uniformly to $(u_\infty,v_\infty)$, an entire solution of~\eqref{1.1}. From the choice of $t_n$ and $s_n$, one has $v_\infty (0,0) = 0$, hence $v_\infty \equiv 0$ by the strong maximum principle. In particular, the sequence $s_n$ may not be bounded as it would contradict the fact that
$$\lim_{n \to +\infty} v_n (t_n,x_n+c_n t_n e_n) = \frac{\varepsilon}{2} >0.$$
We can thus assume that $s_n \to +\infty$ as $n \to +\infty$.\\

Now let us consider the limit functions
$$\tilde{u} (t,x) = \lim_{n \to +\infty} u_n (t_n  +t,x_n+c_n t_n e_n +x),$$
$$\tilde{v} (t,x) = \lim_{n \to +\infty} v_n (t_n  +t,x_n+c_n t_n e_n +x),$$
which are well defined thanks to weak dissipativity and parabolic estimates (as always, up to extraction of another subsequence). The pair $(\tilde{u},\tilde{v})$ is a global in time solution of system~\eqref{1.1}, and moreover $\tilde{v} (0,0) = \frac{\varepsilon}{2} >0$. 

Proceeding as in the beginning of the proof of Lemma~\ref{lem:1}, one can also check that $\tilde{u} (t=0) \not \equiv 0$. Indeed, assume by contradiction that $\tilde{u} (t=0) \equiv 0$, thus $\tilde{u} \equiv 0$ by the strong maximum principle. Then $\tilde{v}$ is a bounded solution of $(\partial_t - \Delta) \tilde{v} = \tilde{v} G(0,\tilde{v})$, which may only be 0 because $G(0,v) \leq G(0,0)<0$ for all $v \geq 0$. This would contradict the fact that, by construction, $\tilde{v} (0,0)>0$.

Then we look at $(\tilde{u},\tilde{v})$ as a solution of system~\eqref{1.1} with initial data
$$(\tilde{u}_0 ,\tilde{v}_0) := \lim _{n \to +\infty} (u_n (t_n  ,x_n+c_n t_n e_n +x),v_n (t_n ,x_n+c_n t_n e_n +x)),$$
which belongs to $C \cap B_X(0,M(\kappa))$, and whose both components are nontrivial. Applying again Lemma~\ref{lem:1}, one gets that
$$\forall x \in \mathbb{R}^N, \ \limsup_{t\to +\infty} \tilde{v} (t,x+cte) \geq \varepsilon,$$
for any $c \in [0,c^0]$ and $e\in S^{N-1}$.

On the other hand, for all $t \in [0,s_n)$,
$$v_n (t_n+t, x_n +c_n t_n e_n + c_n te_n) \leq \frac{\varepsilon}{2}.$$
Since $s_n \to +\infty$, we get by the locally uniform convergence that
$$\tilde{v} (t,c_\infty te_\infty) \leq \frac{\varepsilon}{2},\;\forall t\geq 0,$$
which contradicts the inequality above provided by Lemma~\ref{lem:1}.\\

The second and third assertions, that is the spreading properties of $u$, can now be proved with similar arguments.
\end{proof}

\subsection{Third step: Uniform spreading}

In the previous subsection, we have shown that some pointwise spreading property occurs locally in any frame moving with a constant speed $0 \leq c < c^*$. We now prove that this spreading is in fact uniform on the whole balls of radius $ct$ and center at the origin, with $0 \leq c < \min \{c^*, c^{**} \}$.

\begin{lemma}\label{lem:3}
Let an initial data $(u_0,v_0)\in C \cap B_X (0,\kappa)$ with $u_0 \not \equiv 0$ and $v_0 \not \equiv 0$ be given for some $\kappa>0$. Then for any $0 \leq c^0 < \min \{c^*, c^{**}\}$, there exists $\varepsilon>0$ such that
$$\liminf_{t\to +\infty} \inf_{\| x\| \leq c^0 t} v(t,x) \geq \varepsilon,$$
$$\liminf_{t\to +\infty} \inf_{\| x\| \leq c^0 t} u(t,x) \geq \varepsilon,$$
$$\limsup_{t\to +\infty} \sup_{\| x\| \leq c^0 t} u(t,x) \leq 1-\varepsilon.$$
\end{lemma}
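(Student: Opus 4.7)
The plan is to argue by contradiction, extracting an entire-solution limit aligned with the moving frame identified by the contradicting sequence, and forcing that limit to be bounded below by $\varepsilon$ everywhere in space, which will conflict with its smallness at the origin.

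Concretely, I would set $\varepsilon:=\varepsilon_2(M(\kappa_0),c^0)$ with $\kappa_0:=\|(u_0,v_0)\|_X$ and suppose, for contradiction, that the first assertion fails. Then there exist sequences $t_n\to\infty$ and $x_n\in\R^N$ with $\|x_n\|\le c^0t_n$ and $v(t_n,x_n)\to\alpha\in[0,\varepsilon)$. Writing $x_n=c_nt_ne_n$ with $c_n:=\|x_n\|/t_n\in[0,c^0]$ and $e_n\in S^{N-1}$, I pass to a subsequence so that $c_n\to c_\infty\in[0,c^0]$ and $e_n\to e_\infty\in S^{N-1}$, and consider the frame-aligned shifts
\[
(\bar u_n,\bar v_n)(\tau,y):=\bigl(u(t_n+\tau,\,c_\infty t_ne_\infty+y),\;v(t_n+\tau,\,c_\infty t_ne_\infty+y)\bigr).
\]
By Assumption~\ref{ASS-U} and parabolic regularity, up to a further subsequence $(\bar u_n,\bar v_n)$ converges locally uniformly on $\R\times\R^N$ to an entire solution $(\bar u_\infty,\bar v_\infty)$ of~\eqref{1.1}.

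The crucial step is to apply Lemma~\ref{lem:2} to the solution at time $1$ as initial datum (which lies in $C\cap B_X(0,M(\kappa_0))$ and has strictly positive components by the strong maximum principle) in the frame $(c_\infty,e_\infty)$ with arbitrary spatial shift $y\in\R^N$: this yields $\liminf_{t\to\infty}v(t,y+c_\infty te_\infty)\ge\varepsilon$ for every $y$. Rewriting in the shifted coordinates gives $\bar v_n(\tau,y+c_\infty\tau e_\infty)\ge\varepsilon-o(1)$ as $n\to\infty$ for each fixed $(\tau,y)$; passing to the locally uniform limit then yields $\bar v_\infty(\tau,y+c_\infty\tau e_\infty)\ge\varepsilon$ for every $(\tau,y)\in\R\times\R^N$. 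Since as $(\tau,y)$ varies over $\R\times\R^N$ the argument $y+c_\infty\tau e_\infty$ covers the whole of $\R^N$, this forces $\bar v_\infty\ge\varepsilon$ everywhere. Setting $\zeta_n:=x_n-c_\infty t_ne_\infty=t_n(c_ne_n-c_\infty e_\infty)$, we observe that $\bar v_n(0,\zeta_n)=v(t_n,x_n)\to\alpha<\varepsilon$; when $\zeta_n$ stays bounded along a subsequence with $\zeta_n\to\zeta$, locally uniform convergence immediately yields $\bar v_\infty(0,\zeta)=\alpha$, contradicting the pointwise lower bound.

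The hard part will be the remaining sub-case $\|\zeta_n\|\to\infty$, for which the local uniform convergence of $\bar v_n$ to $\bar v_\infty$ does not reach $\zeta_n$. My plan to overcome it is to invoke Lemma~\ref{lem:2} once more, this time in the frame $(c_\infty,e_\infty)$ with the $n$-dependent shift $y=\zeta_n$: the identity $v(t_n,x_n)=v(t_n,\zeta_n+c_\infty t_ne_\infty)$ together with $\liminf_{t\to\infty}v(t,\zeta_n+c_\infty te_\infty)\ge\varepsilon$ reduces the task to showing that the corresponding time $T_n$ after which $v(t,\zeta_n+c_\infty te_\infty)\ge\varepsilon-\eta$ satisfies $T_n\le t_n$ for $n$ large. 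Since $\|\zeta_n\|/t_n=\|c_ne_n-c_\infty e_\infty\|\to 0$ while the gap $c^{**}-c_\infty\ge c^{**}-c^0>0$ is bounded below, one can combine the exponential super-solutions constructed in Section~\ref{sec:outer} with the positivity provided by Lemma~\ref{lem:2} to obtain a quantitative estimate $T_n\le\|\zeta_n\|/(c^{**}-c_\infty)+O(1)=o(t_n)$, and the contradiction follows. The assertions concerning $u$ are handled by entirely parallel arguments using the corresponding parts of Lemma~\ref{lem:2}.
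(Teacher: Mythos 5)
Your setup and the bounded-shift case are sound: extracting the limit $(\bar u_\infty,\bar v_\infty)$ and using Lemma~\ref{lem:2} with fixed spatial shifts to force $\bar v_\infty\ge\varepsilon$ everywhere is correct, and contradicts $\bar v_\infty(0,\zeta)=\alpha<\varepsilon$ when $\zeta_n$ stays bounded. The genuine gap is in the case $\|\zeta_n\|\to\infty$, which is the actual content of the lemma. Lemma~\ref{lem:2} is proved by a compactness/contradiction argument, so its conclusion is purely qualitative: for each \emph{fixed} shift $y$ there is some time after which $v(t,y+c_\infty te_\infty)\ge\varepsilon-\eta$, but nothing controls how that time grows with $\|y\|$. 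Your claimed bound $T_n\le\|\zeta_n\|/(c^{**}-c_\infty)+O(1)$ is precisely a quantitative, uniform-in-shift version of Lemma~\ref{lem:2}, i.e.\ essentially the uniform spreading statement you are trying to prove, so the argument is circular as it stands. Moreover the tools you invoke cannot produce it: the exponential functions of Section~\ref{sec:outer} are \emph{super}-solutions, they bound $v$ from above and provide no mechanism for transporting the lower bound $\varepsilon$ to a point at distance $\|\zeta_n\|\to\infty$ within time $O(\|\zeta_n\|)$.

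For comparison, the paper closes exactly this gap without quantifying Lemma~\ref{lem:2}, by changing the moving frame. Writing the bad points as $c_nt_ne_n$, it fixes $\delta>0$ with $c_\infty+\delta<\min\{c^*,c^{**}\}$, sets $t_n'=c_nt_n/(c_\infty+\delta)$, and (after disposing of the easy case where $c_nt_n$ stays bounded) applies Lemma~\ref{lem:2} with \emph{zero} spatial shift along the fixed ray $e_\infty$ at speed $c_\infty+\delta$ to get $v(t_n',c_nt_ne_\infty)\ge\varepsilon$. The segment joining $(t_n',c_nt_ne_\infty)$ to $(t_n,c_nt_ne_n)$ is a path moving with speed $\tilde c_n=c_nt_n\|e_n-e_\infty\|/(t_n-t_n')\to0$, and along it one reruns the persistence dichotomy from the proof of Lemma~\ref{lem:2} (last time where $v=\varepsilon/2$, the subsequent time interval where $v\le\varepsilon/2$, extraction of an entire-solution limit) to contradict Lemma~\ref{lem:1}. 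If you want to keep your scheme, you must likewise redo such a compactness/persistence argument with the $n$-dependent shifts rather than cite Lemma~\ref{lem:2} as if it were quantitative.
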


\begin{proof}
Once more, we proceed by contradiction. Assume that there exist $t_n \to +\infty$, $c_n \in [0,c^0]$ and $e_n \in S^{N-1}$ such that, for instance
\begin{equation}\label{eq:lem31}
v(t_n, c_n t_n e_n) \to 0.
\end{equation}
Without loss of generality, up to a subsequence, we assume that $c_n \to c_\infty\in [0,c^0]$ and $e_n \to e_\infty$ as $n \to +\infty$. Choose some small $\delta >0$ such that $c_\infty + \delta < \min \{ c^* , c^{**} \}$, and define the sequence
$$t'_n := \frac{c_n t_n}{c_\infty + \delta} \in [0,t_n),\;\forall n\geq 0.$$

Consider first the case when the sequence $\{c_n t_n\}_{n\geq 0}$ is bounded, which may happen if $c_\infty =0$. Then up to extraction of a subsequence, one has as $n \to +\infty$ that 
$$c_n t_n e_n \to x_\infty \in \R^N,$$
and
$$v (t_n +t, c_n t_n e_n + x) \to 0,$$
locally uniformly. This directly follows from the strong maximum principle and the fact that any limit $\tilde{v} (t,x)$ satisfy $\tilde{v} (0,x_\infty) =0$. Thus, one also obtains that $v(t_n,0) \to 0$ as $n \to +\infty$, which already contradicts Lemma~\ref{lem:2} with $c=0$.

We can now assume that $t'_n \to +\infty$. Then due to Lemma~\ref{lem:2} one obtains
\begin{equation}\label{eq:lem32}
v (t'_n,(c_\infty+\delta) t'_n e_\infty) \geq \varepsilon,
\end{equation}
for each $n$ large enough, where $\varepsilon = \varepsilon_2 (M(\kappa),c_\infty + \delta)$ is the constant provided by Lemma \ref{lem:2}.

Now let us look at the functions
$$\tilde{v}_n (t,x) = v (t'_n + t, c_n t_n e_\infty + x)\text{ and }\tilde{u}_n (t,x) = u (t'_n + t, c_n t_n e_\infty  + x).$$
Define also the sequences
$$\tilde{c}_n := \frac{c_n t_n \| e_n - e_\infty \|}{t_n - t'_n} \to 0 \; \text{ and }\tilde{e}_n := \frac{e_n - e_\infty}{\|e_n - e_\infty\|}.$$
Using the above notations, \eqref{eq:lem31} and \eqref{eq:lem32} rewrite as
$$\tilde{v}_n (0,0) \geq \varepsilon \ \mbox{ and } \tilde{v}_n (t_n - t'_n,\tilde{c}_n (t_n - t'_n) \tilde{e}_n) \to 0.$$
Now let us introduce the sequences 
$$\tilde{t}_n := \sup \left\{ 0 \leq t \leq t_n - t'_n \; | \ \tilde{v}_n (t,\tilde{c}_n t \tilde{e}_n) \geq \frac{\varepsilon}{2} \right\} \in (0, t_n - t'_n),$$
$$\tilde{s}_n := t_n - t'_n - \tilde{t}_n.$$
Then this yields the following properties:
$$\tilde{v}_n (\tilde{t}_n , \tilde{c}_n \tilde{t}_n \tilde{e}_n) = \frac{\varepsilon}{2},$$
$$\tilde{v}_n (t,\tilde{c}_n t \tilde{e}_n) \leq \frac{\varepsilon}{2}, \ \ \forall t \in [\tilde{t}_n, \tilde{t}_n + \tilde{s}_n],$$
$$\tilde{v}_n (\tilde{t}_n+\tilde{s}_n, \tilde{c}_n (\tilde{t}_n + \tilde{s}_n) \tilde{e}_n) \to 0 \ \mbox{ as } n \to +\infty.$$
Proceeding as in the proof of Lemma~\ref{lem:2}, one reaches a contradiction with Lemma~\ref{lem:1}.  Thus we conclude that
$$\liminf_{t \to +\infty} \inf_{\| x \| \leq c^0 t } v(t,x) \geq \varepsilon .$$
The proof of the second and third statements in Lemma \ref{lem:3}, namely $\displaystyle \liminf_{t\to +\infty} \inf_{\| x \| \leq c^0 t} u (t,x) \geq  \varepsilon$ and $\displaystyle \limsup_{t\to +\infty} \sup_{\| x \| \leq c^0 t} u (t,x) \leq 1 - \varepsilon$, follow from the same arguments. The proof of Lemma~\ref{lem:3} is complete.
\end{proof}
\bigskip

\section{Weak dissipativity}\label{sec:dissip}

The conclusions of our main theorems hold provided that the considered system~\eqref{1.1} satisfies the weak dissipativity property (see Assumption \ref{ASS-U}). We believe this assumption to be satisfied by large classes of systems, and the following theorem gives a sufficient condition for weak dissipativity.
\begin{theorem}\label{th:dissip}
Let Assumptions \ref{ASS-F} and \ref{ASS-G} be satisfied.
Set $$m^* := \inf \{ 0 \leq m \leq 1 \; | \ \forall u \geq m, \; F (u,+\infty) < 0 \}.$$
If $G(0,+\infty)>-\infty$ and $G(m^*,+\infty) < 0$, then the weak dissipativity Assumption~\ref{ASS-U} is satisfied. Conversely, if it is satisfied, then $G(m^*,+\infty )\leq 0$.
\end{theorem}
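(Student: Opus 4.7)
The theorem splits into a sufficient condition and a necessary condition; I would treat them separately.

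For \textbf{sufficiency}, the bound $u\le 1$ is already known, and the work consists of producing a uniform-in-time bound on $\|v(t,\cdot)\|_\infty$. The underlying mechanism is a two-way feedback loop: (i) wherever $v$ is very large, the hypothesis $F(u,+\infty)<0$ for $u>m^*$ drives $u$ below $m^*+\delta$; (ii) wherever $u\le m^*+\delta$, the hypothesis $G(m^*,+\infty)<0$ combined with $G$ being nondecreasing in $u$ yields $G(u,v)\le G(m^*+\delta,v)<0$ for $v$ large, so $v$ decreases. To implement this, I would first use $G(u,v)\le G(1,0)$ and the scalar comparison principle for the $v$-equation to get $\|v(t)\|_\infty\le\|v_0\|_\infty e^{G(1,0)t}$, which handles bounded time intervals and reduces the problem to an asymptotic bound. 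Next, fix $\eta>0$ with $G(m^*,+\infty)<-2\eta$ and use continuity and monotonicity to pick $\delta,V_0,V_1>0$ such that $G(u,v)\le -\eta$ whenever $u\le m^*+\delta$ and $v\ge V_0$, and $F(u,v)\le -\eta$ whenever $u\ge m^*+\delta/2$ and $v\ge V_1$. A parabolic supersolution applied to the $u$-equation on the (moving) region $\{v\ge V_1\}$ then shows that, after a transient time depending on $\kappa$, $u(t,x)\le m^*+\delta$ wherever $v(t,x)\ge V_1$. Feeding this back into the $v$-equation on the set $\{u\le m^*+\delta\}$ and applying parabolic comparison there yields the desired asymptotic upper bound for $\|v(t)\|_\infty$.

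For \textbf{necessity}, I argue by contrapositive: assume $G(m^*,+\infty)>0$, and exhibit, for arbitrary $\kappa$, an initial datum in $C\cap B_X(0,\kappa)$ whose solution escapes every ball. Taking spatially constant $u_0\equiv 1, v_0\equiv \kappa$, the PDE reduces to the ODE system $\dot u=uF(u,v)$, $\dot v=vG(u,v)$. The monotonicity of $G$ together with the hypothesis give $G(u,v)\ge G(m^*,+\infty)>0$ whenever $u\ge m^*$, so $v$ grows exponentially along any piece of orbit in the favourable half-plane $\{u\ge m^*\}$. A phase-plane argument, using that $F(m^*,+\infty)\le 0$ by definition of $m^*$ and the monostable structure of $F$, shows the orbit remains in (or returns often enough to) this half-plane, so $v(t)\to +\infty$.

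The \textbf{main obstacle} lies on the sufficiency side, in the nonlocal and time-varying nature of the two regions $\{v\ge V_1\}$ and $\{u\le m^*+\delta\}$: because the full system has no comparison principle, one cannot apply a supersolution argument to it directly but must instead patch together parabolic comparisons on each equation separately, in the spirit of the arguments in Sections~\ref{sec:interm} and~\ref{sec:inner}. The role of the auxiliary assumption $G(0,+\infty)>-\infty$ is to furnish a lower bound on the growth coefficient in the $v$-equation, ensuring well-posedness of the associated scalar comparisons at vanishing prey density and preventing the iterative estimates from degenerating.
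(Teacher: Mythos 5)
Your overall mechanism for sufficiency --- large $v$ forces $u$ below $m^*+\delta$, which in turn makes $vG(u,v)$ negative --- is exactly the one used in the paper, but your Step 3 contains a genuine gap. The implication ``after a transient time, $u(t,x)\le m^*+\delta$ wherever $v(t,x)\ge V_1$'' cannot be obtained by a supersolution argument for the $u$-equation on the moving region $\{v\ge V_1\}$: a point may enter this region at any time with $u$ still close to $1$ (think of the leading edge of the predator's range, or of an overshoot along an oscillatory orbit), and on the parabolic boundary of the region you only know $u\le 1$; since interior points can be arbitrarily close in parabolic distance to that boundary, no comparison argument on that region can force $u\le m^*+\delta$ throughout it. What is true, and what the proof needs, is that $u$ is small at points around which $v$ has been large on a \emph{large space-time cylinder}. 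The paper obtains this by a compactness/contradiction argument: assuming $\|v_n(t_n,\cdot)\|_{L^\infty}\to\infty$, it normalizes $\tilde v_n = v_n(t_n+\cdot\,,x_n+\cdot)/v_n(t_n,x_n)$, uses the boundedness of $G$ on $[0,1]\times[0,\infty)$ (this, rather than ``well-posedness of scalar comparisons'', is the role of $G(0,+\infty)>-\infty$) together with parabolic estimates and the strong maximum principle to get $\tilde v_\infty>0$, hence that $v_n\to+\infty$ \emph{locally uniformly} around $(t_n,x_n)$; only then can it dominate $u_n$ on cylinders $|t|<T$, $\|x\|\le R$ by solutions $\overline u_{T,R}$ of $\partial_t u = d\Delta u + uF(u,M)$ with initial and lateral data equal to $1$, and let $T,R\to\infty$ to conclude $\limsup u_n\le m^*$, after which the exponential supersolution $2e^{G(m^*,+\infty)(t+T)}$ contradicts $\tilde v_\infty(0,0)=1$. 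This upgrade from pointwise largeness of $v$ to locally uniform largeness is the heart of the proof and is missing from your outline; without it, Steps 3--4 do not close.

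In the necessity part your structure (spatially constant data, reduction to the ODE system, exponential growth of $v$ while $u\ge m^*$) matches the paper, but your justification of the invariance of the half-plane $\{u\ge m^*\}$ is wrong as stated: you invoke ``$F(m^*,+\infty)\le 0$ by definition of $m^*$'', which has the wrong sign and would not prevent the orbit from leaving the half-plane, where $G$ can be very negative (recall $G(0,v)<0$), so the ``returns often enough'' alternative would itself require proof. What actually follows from the definition of $m^*$ (using that $F(\cdot,+\infty)=\inf_{v}F(\cdot,v)$ is upper semicontinuous) is $F(m^*,+\infty)\ge 0$, hence $F(m^*,v)\ge 0$ for every $v\ge 0$, so that $\dot u\ge 0$ on $\{u=m^*\}$ and the half-plane is forward invariant; this is exactly how the paper argues, starting from $(u_0,v_0)\equiv(m^*,1)$. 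With that sign corrected, your necessity argument coincides with the paper's.
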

Note first that $F(\cdot,+\infty)$ is well-defined in $\{-\infty\}\;  \cup \; \mathbb{R}$ thanks to the monotonicity of $F$, and so is $m^*$ since $F(1,+\infty) < 0$. The constant $m^*$ can be described as the largest stable state of the prey's population when the predator's is infinitely dense. 
%

Furthermore, the following corollary immediately follows from Theorem~\ref{th:dissip}.
\begin{corollary}\label{cor:dissip}
If $F(u,+\infty)< 0$ for all $u \in (0,1]$ and $G(0,+\infty)> -\infty$, then the weak dissipativity Assumption~\ref{ASS-U} holds true. 
\end{corollary}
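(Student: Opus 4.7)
The plan is to deduce Corollary \ref{cor:dissip} as a direct application of Theorem \ref{th:dissip}. There are two small things to check: that the constant $m^*$ associated with $F$ in the statement of Theorem \ref{th:dissip} is equal to $0$, and that $G(m^*,+\infty) = G(0,+\infty) < 0$ (finiteness of $G(0,+\infty)$ being already part of the hypothesis of the corollary).

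First I would check that $m^* = 0$. By assumption, $F(u,+\infty) < 0$ for every $u \in (0,1]$, so for any $\eta > 0$ the condition ``$F(u,+\infty) < 0$ for all $u \geq \eta$'' appearing in the definition of $m^*$ is fulfilled on the relevant range $[\eta,1]$. Hence every $\eta > 0$ is admissible, and taking the infimum forces $m^* = 0$.

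Next I would check that $G(0,+\infty) < 0$. By Assumption \ref{ASS-G}(c), the map $v \mapsto G(0,v)$ is nonincreasing, so $G(0,+\infty) \leq G(0,0)$. Assumption \ref{ASS-G}(b) gives $G(0,0) < 0$, whence $G(0,+\infty) < 0$. Combined with the standing assumption $G(0,+\infty) > -\infty$, all the hypotheses of Theorem \ref{th:dissip} are satisfied, and its conclusion is exactly Assumption~\ref{ASS-U}.

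No genuine obstacle is present here: the corollary is designed merely to repackage Theorem~\ref{th:dissip} in a form that is easier to verify on concrete prey-predator models, where the ecologically natural condition ``the prey cannot survive an overabundance of predators'' implies $m^*=0$ automatically.
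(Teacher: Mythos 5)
Your argument is correct and coincides with the paper's own justification: from $F(\cdot,+\infty)<0$ on $(0,1]$ one gets $m^*=0$, and $G(0,+\infty)\leq G(0,0)<0$ by Assumption~\ref{ASS-G}, so Theorem~\ref{th:dissip} applies directly. Your reading of the infimum defining $m^*$ over the relevant range $u\in[m,1]$ is the intended one (solutions satisfy $u\leq 1$), so there is no gap.
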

Indeed, if $F(\cdot, +\infty) < 0$ in $(0,1]$, then $m^* = 0$ and by our assumptions $G(0,+\infty) \leq G(0,0)<0$.

We point out that the prey-predator models mentioned in the introduction (see~\eqref{example} and \eqref{Lotka}) clearly satisfy the conditions of Corollary~\ref{cor:dissip}. However, we expect weak dissipativity to hold for many other systems that are not covered by the above theorem and corollary, for instance if $G(0,v) \equiv 0$. This case arises in combustion in adiabatic environments.
\begin{remark}
As we mentioned in the introduction, the weak dissipativity is easily verified if we assume that $G(1,+\infty)<0$, which is satisfied by models which involve additional intraspecific competition in the predator. However, typical predator models do not assume such intraspecific competition, and we usually have $G(1,+\infty)>0$, which makes the verification of weak dissipativity much harder.
\end{remark}

\begin{proof}[Proof of Theorem \ref{th:dissip}]
Let us first assume that $G(m^*,+\infty) < 0$ and prove the weak dissipativity. 
Note that the condition $G(0,+\infty)>-\infty$ ensures that $G$ is bounded on $[0,1]\times [0,\infty)$.

We fix some $\kappa >0$ and recall that, from the comparison principle, for any initial data $(u_0,v_0) \in C\cap B_X (0,\kappa)$, the associated solution $(u,v)$ satisfy
$$0 \leq u \leq 1 \mbox{ and } v \geq 0.$$
Thus, we only need to prove that $v$ is uniformly bounded by some $M(\kappa)$ which depends on $\kappa$ but not on the specific choice of the initial data~$(u_0,v_0)$. To argue by contradiction, suppose that there exists $(u_{0,n} ,v_{0,n} ) \in C \cap B_X (0,\kappa)$ a sequence of initial data such that
$$\exists t_n \geq 0, x_n \in \R^N \mbox{ such that } v_n (t_n,x_n) \to +\infty \mbox{ as } n \to +\infty,$$
where for each $n\geq 1$, $(u_n,v_n)$ denotes the solution of the Cauchy problem \eqref{1.1} with initial data $(u_{0,n} ,v_{0,n} )$.

One can easily check that the function $\overline{v} := \kappa e^{G(1,0)t}$ is a super-solution of
$$\partial_t \overline{v} \geq  \Delta  \overline{v} +  \overline{v} G(u_n,\overline{v}),$$
for any $n \in \mathbb{N}^*$. By the comparison principle, $\| v_n (t,\cdot) \|_{L^\infty (\R^N)}$ is well-defined for any $t >0$, and is locally uniformly bounded with respect to $t$.

In particular, we obtain that $t_n \to +\infty$ as $n \to +\infty$, and we can assume, without loss of generality, that
$$t_n = \min \{ t > 0 \; | \ \| v_n (t,\cdot) \|_{L^\infty (\R^N )} =n \}\text{ and }v_n (t_n,x_n) \in \left[ \frac{n}{2} ,n \right].$$
Next let us denote
$$\tilde{v}_n (t,x) := \frac{v_n (t_n+t,x_n+x)}{v_n (t_n,x_n)},$$
which is a solution of
$$\partial_t \tilde{v}_n (t,x) = \Delta \tilde{v}_n (t,x) + \tilde{v}_n (t,x) G (u_n (t_n+t,x_n+x),v_n (t_n+t,x_n+x)).$$
This sequence of functions is also locally uniformly bounded: this is clear for negative $t$ by construction, and for positive $t$ it follows from the same super-solution $\overline{v}$ as above. Then, by parabolic estimates, one can extract a converging subsequence to some $\tilde{v}_\infty$ an entire (weak) solution of
\begin{equation}\label{eq:dissip_vlim_eq}\partial_t \tilde{v}_\infty (t,x) = \Delta  \tilde{v}_\infty (t,x) + \tilde{v}_\infty (t,x) G_\infty (t,x),
\end{equation}
where 
$$G_\infty (t,x) := \lim_{n \to +\infty} G (u_n (t_n+t,x_n+x),v_n (t_n+t,x_n+x)),$$
this limit being well-defined in the $L_{loc}^\infty\left(\mathbb R\times \mathbb R^N\right)-$weak star topology, thanks to the boundedness of $G$.

By construction, $\tilde{v}_\infty (0,0)=1$ and thus, by the strong maximum principle, it is positive everywhere. We conclude that \begin{equation}\label{eq:dissip_vlim}
v_n (t_n+t,x_n+x) \to +\infty,
\end{equation}
locally uniformly as $n \to +\infty$.

This will now allow us to obtain a contradiction, in a fashion similar to our arguments in the previous section. Indeed, since $u_n (t_n+t, x_n +x)$ is uniformly bounded from above by 1, we can define
$$u_\infty (t,x) := \limsup_{n \to +\infty} u_n (t_n+ t, x_n+x) \in [0,1].$$
Let us prove that $u_\infty \leq m^*$. To do so we fix any $\delta >0$ and we shall check that $u_\infty \leq m^* + \delta$.

To proceed let us consider $M$ so that $\displaystyle\sup_{u\in [m^* +\delta,1]} F (u,M) < 0$. Next, thanks to \eqref{eq:dissip_vlim}, we know that for any $T >0$ and $R >0$ and for any $n$ large enough,
$$u_n (t_n+ t , x_n + x) \leq \overline{u}_{T,R} (t,x) \ \mbox{ for }  \| x\| \leq R \mbox{ and } |t | < T,$$
wherein $\overline{u}_{T,R}$ satisfies
\begin{equation*}
\left\{
\begin{array}{l}
\partial_t \overline{u}_{T,R} (t,x) = d \Delta \overline{u}_{T,R} (t,x) + \overline{u}_{T,R} F (\overline{u}_{T,R} ,M), \ \mbox{ for }  \| x\| \leq R \mbox{ and } |t | < T, \vspace{3pt} \\
\overline{u}_{T,R} (-T,x) = 1, \ \mbox{ for } \| x \| \leq R, \vspace{3pt}\\
\overline{u}_{T,R} (t,x) = 1, \ \mbox{ for } \| x \| = R \mbox{ and } |t| < T. \vspace{3pt} \\
\end{array}
\right.
\end{equation*}
It is clear, from our choice of $M$, that 
$$\limsup_{T \to +\infty, R \to +\infty} \overline{u}_{T,R} (t,x) \leq m^* + \delta,\;\forall t\in\R,\;x\in\R^N. $$
Thus as announced we obtain 
\begin{equation}\label{eq:dissip_ulim}
u_\infty \leq m^*.
\end{equation}
Now, using \eqref{eq:dissip_vlim} and \eqref{eq:dissip_ulim}, we know that
$$G_\infty (t,x) \leq G(m^*,+\infty).$$
Therefore the function 
$$\overline{v}(t,x) := 2 e^{G(m^*,+\infty) (t+T)}$$
is a super-solution for \eqref{eq:dissip_vlim_eq} satisfied by $\tilde{v}_\infty$, for any $T \in \mathbb{R}$. From our choice of~$t_n$, it is clear that $\tilde{v}_\infty (-T,\cdot) \leq 2 \equiv \overline{v} (-T,\cdot)$, provided that $T \geq 0$. Thus the comparison principle yields 
$$\tilde{v}_\infty (0, 0) \leq 2e^{G(m^*,+\infty)T}.$$
Recalling that $G(m^*,+\infty)<0$, letting $T\to\infty$ contradicts $\tilde{v}_\infty (0,0)=1$.
This completes the proof of the first part of the theorem.\\

To prove the second part of the theorem, let us assume that $G(m^*,+\infty) >0$ and consider the initial data $(u_0 ,v_0) \equiv (m^*, 1)$. We can then ignore the space dependence and diffusion, as the solution satisfies the system of ODEs:
\begin{equation*}
\begin{cases}
\partial_t u (t)=u(t)F\left(u(t),v(t)\right),\\
\partial_t v(t)=v(t)G\left(u(t),v(t)\right).
\end{cases}
\end{equation*}
One can easily check, from the definition of $m^*$, that $m^* F\left(m^*,v\right) \geq 0$ for any $v \geq 0$, hence $u (t) \geq m^*$ for any $t \geq 0$. Therefore, $\inf_{t >0} G(u (t),v(t)) \geq G(m^*,+\infty) >0$, and it immediately follows that $v(t) \to +\infty$ as $t \to +\infty$. Thus the weak dissipativity Assumption~\ref{ASS-U} does not hold, and Theorem~\ref{th:dissip} is proved.
\end{proof}

%
%
%

\section{Asymptotic behaviour in the final zone}\label{sec:asymp}

We complete this paper with a short proof of Theorem~\ref{THEO.asymptotics} as well as two examples to which our result applies. In particular, we now assume that the diffusivity ratio $d=1$, and that the ODE system~\eqref{1.1.ODE} admits a unique stable and positive equilibrium point $(u^* ,v^*)$ as well as a Lyapunov function $\Phi$, whose properties are stated in Assumption~\ref{ass:Lyapunov}.\\

Since $\Phi$ is bounded from below, we assume without loss of generality that $\Phi\geq 0$, and it is an equality only at the unique minimizer $(u^* ,v^*)$.

In order to prove Theorem~\ref{THEO.asymptotics}, let us argue by contradiction by assuming that there exist $c\in \left[0,\min\{c^*,c^{**}\}\right)$, a sequence $\{\left(t_k,x_k\right)\}_{k\geq 0}\subset (0,\infty)\times\R^N$ such that $t_k \to +\infty$ and $\delta>0$ such that for all $k\geq 0$:
\begin{equation}\label{popol}
\|x_k\|\leq ct_k\text{ and }|u(t_k,x_k)-u^*|+ |v(t_k,x_k)-v^*|\geq \delta.
\end{equation}
Consider the sequence of functions $(u_k,v_k)$ defined by
\begin{equation*}
(u_k,v_k)(t,x)=(u,v)(t+t_k, x+x_k).
\end{equation*}
Let us fix $c'>0$ such that $c<c'<\min\{c^*,c^{**}\}$.
Note that the uniform inner spreading speed stated in Theorems \ref{THEO1} and \ref{THEO2} apply and there exist $M>0$, $A>0$ large enough and $\varepsilon>0$ small enough such that
for $k\geq 0$, $t\in\R$ and $x\in\R^N$ one has
\begin{equation}\label{pol}
t+t_k\geq A\text{ and }\|x\|\leq c't+(c'-c)t_k\;\Rightarrow\;\begin{cases} \varepsilon\leq u_k(t,x)\leq 1-\varepsilon\\ \varepsilon\leq v_k(t,x)\leq M.\end{cases}
\end{equation}
Now by parabolic estimates, possibly along a subsequence, one may assume that
\begin{equation*}
(u_k,v_k)(t,x)\to \left(u_\infty,v_\infty\right)(t,x)\text{ locally uniformly for $(t,x)\in\R\times\R^N$},
\end{equation*} 
where $\left(u_\infty,v_\infty\right)$ is a bounded entire solution of \eqref{1.1}. 
In addition, due to \eqref{pol}, the function $(u_\infty,v_\infty)$ satisfies
\begin{equation}\label{cond-inf}
\begin{split}
&\inf_{(t,x)\in\R\times \R^N}u_\infty(t,x)>0 \ \ \text{ and }\inf_{(t,x)\in\R\times \R^N}v_\infty(t,x)>0,\\
&\sup_{(t,x)\in\R\times \R^N}u_\infty(t,x)<1,
\end{split}
\end{equation}
while \eqref{popol} ensures that
\begin{equation}\label{popopol}
|u_\infty(0,0)-u^*|+ |v_\infty(0,0)-v^*|>0.
\end{equation}
In order to reach a contradiction, we claim that:
\begin{claim}\label{claim-popol}
Let $(U,V)$ be a bounded entire solution of \eqref{1.1} satisfying \eqref{cond-inf}. Then
\begin{equation*}
(U,V)(t,x)\equiv (u^*,v^*).
\end{equation*} 
\end{claim}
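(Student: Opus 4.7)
My plan is to turn the ODE-level Lyapunov function $\Phi$ into a spatial Lyapunov functional for \eqref{1.1}, and then force rigidity through a translation limit combined with the strict Lyapunov property. The crucial ingredient is that $d=1$: setting $\Psi(t,x) := \Phi(U(t,x),V(t,x))$, a direct chain-rule computation gives
\[
(\partial_t-\Delta)\Psi = \bigl(UF\Phi_u + VG\Phi_v\bigr)(U,V) - Q(t,x),
\]
where $Q(t,x) := \sum_{i=1}^N \langle D^2\Phi(U,V)(\partial_{x_i}U,\partial_{x_i}V),(\partial_{x_i}U,\partial_{x_i}V)\rangle \geq 0$ by convexity of $\Phi$. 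Assumption~\ref{ass:Lyapunov}$(a)$ gives $UF\Phi_u+VG\Phi_v\leq 0$, so $\Psi$ is a bounded nonnegative sub-solution of the heat equation on $\R\times\R^N$, which vanishes exactly where $(U,V)=(u^*,v^*)$. Moreover, \eqref{cond-inf} confines $(U,V)$ to a compact set $K\subset\mathcal{O}$ on which strict convexity of $\Phi$ makes $D^2\Phi$ uniformly positive definite.

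Set $M(t):=\sup_{x\in\R^N}\Psi(t,x)$. Since bounded sub-solutions of the heat equation on $\R^N$ obey a global maximum principle, $M$ is nonincreasing; let $M^-:=\lim_{t\to-\infty}M(t)=\sup_t M(t)\in[0,\infty)$. Choose $(t_n,x_n)$ with $t_n\to-\infty$ and $\Psi(t_n,x_n)\to M^-$. Weak dissipativity and interior parabolic estimates produce a subsequential locally uniform limit $(U_\infty,V_\infty)(t,x):=\lim_n (U,V)(t_n+t,x_n+x)$, which is an entire solution of \eqref{1.1} valued in $K$, with $\Psi_\infty:=\Phi(U_\infty,V_\infty)$ satisfying $\Psi_\infty\leq M^-$ everywhere and $\Psi_\infty(0,0)=M^-$. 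Since $\Psi_\infty$ is still a sub-solution of the heat equation attaining its supremum at the interior point $(0,0)$, the strong maximum principle forces $\Psi_\infty\equiv M^-$ on $(-\infty,0]\times\R^N$. Plugging this equality into the identity above, the left-hand side vanishes while the right-hand side equals a non-positive term minus a non-negative one; both must therefore vanish on $\{t\leq 0\}$. Positive definiteness of $D^2\Phi$ on $K$ then forces $\nabla U_\infty\equiv\nabla V_\infty\equiv 0$ there, so $(U_\infty,V_\infty)(t,\cdot)$ is spatially constant, solves the ODE \eqref{1.1.ODE}, and keeps $\Phi\equiv M^-$ for $t\leq 0$.

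By compactness of $K$, I extract $s_k\to-\infty$ with $(U_\infty(s_k),V_\infty(s_k))\to(u_\star,v_\star)\in K\subset\mathcal{O}$. The ODE trajectory issued from $(U_\infty(s_k),V_\infty(s_k))$ keeps $\Phi\equiv M^-$ on $[0,-s_k]$, so by continuous dependence of the ODE on initial data, the trajectory starting at $(u_\star,v_\star)$ keeps $\Phi\equiv M^-$ on all of $[0,\infty)$. Assumption~\ref{ass:Lyapunov}$(b)$ then forces $(u_\star,v_\star)=(u^*,v^*)$, whence $M^-=\Phi(u^*,v^*)=0$. Since $0\leq M(t)\leq M^-=0$ for every $t\in\R$, we obtain $M\equiv 0$, hence $\Psi\equiv 0$, which is the claim.

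The main obstacle is the last step: Assumption~\ref{ass:Lyapunov}$(b)$ is formulated \emph{forward} in time, whereas the strong maximum principle only delivers constancy of $\Phi$ along the limiting trajectory on the \emph{past} half-line $t\leq 0$. Bridging the two requires a compactness / $\alpha$-limit argument together with continuous dependence of the ODE flow; closing the argument also requires that the limit $(u_\star,v_\star)$ remain inside $\mathcal{O}$, which is precisely where the two-sided bounds in \eqref{cond-inf} — notably the strict positivity of $U_\infty,V_\infty$ and the strict upper bound $U_\infty<1$ — become indispensable.
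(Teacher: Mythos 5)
Your argument is correct and is essentially the paper's own proof: with $d=1$ you view $\Phi(U,V)$ as a bounded nonnegative sub-solution of the heat equation, translate along a sequence approaching its supremum, use the strong maximum principle plus strict convexity to obtain a spatially homogeneous limiting ODE trajectory along which $\Phi$ is constant, and then invoke the strict Lyapunov property of Assumption~\ref{ass:Lyapunov} to force this trajectory to be $(u^*,v^*)$, so that $\sup\Phi(U,V)=0$ and hence $(U,V)\equiv(u^*,v^*)$. Your extra step converting the \emph{backward-in-time} constancy given by the strong maximum principle into the \emph{forward-in-time} hypothesis of Assumption~\ref{ass:Lyapunov}~$(b)$ (via the $\alpha$-limit point $(u_\star,v_\star)$ and continuous dependence of the ODE flow) is in fact a more careful treatment of a point the paper passes over by asserting that $W_\infty$ is constant for all times.
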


Note that this claim contradicts \eqref{popopol} and thus completes the proof of Theorem~\ref{THEO.asymptotics}.
It remains to prove Claim \ref{claim-popol}.

To do so let us consider
\begin{equation*}
W (t,x):=\Phi\left(U(t,x),V(t,x)\right).
\end{equation*}
Then one has
\begin{equation}\label{sub-sol}
\begin{split}
\partial_t W-\Delta W =&- \left( \Phi_{uu} | \nabla U |^2 + 2 \Phi_{uv} \nabla U \cdot \nabla V + \Phi_{vv} |\nabla V |^2 \right) \vspace{3pt}\\
&+ \Phi_u UF(U,V) + \Phi_v V G(U,V) \\
  \leq & \hspace{1mm} 0.
\end{split}
\end{equation}
Because of \eqref{cond-inf}, the function $W$ is uniformly bounded.
Consider a sequence $\left\{\left(t_n,x_n\right)\right\}_{n\geq 0}\subset \R\times \R^N$ such that
\begin{equation}\label{cond-sup}
\lim_{n\to\infty} W(t_n,x_n)=\sup_{(t,x)\in\R\times\R^N} W(t,x).
\end{equation}
Next consider the sequence $W_n(t,x)=W\left[U,V\right]\left(t+t_n,x_n+x\right)=W\left[U_n,V_n\right](t,x)$ wherein we have set for each $n\geq 0$:
\begin{equation*}
\left(U_n,V_n\right)(t,x)=\left(U,V\right)\left(t_n+t,x_n+x\right),\;\;\forall (t,x)\in\R\times \R^N.
\end{equation*}
Due to parabolic estimates, without loss of generality, one may assume, possibly along a subsequence, that $\left(U_n,V_n\right)\to \left(U_\infty,V_\infty\right)$ locally uniformly and $W_n\to W_\infty:=W\left[U_\infty,V_\infty\right]$ locally uniformly where $\left(U_\infty,V_\infty\right)$ is an entire solution of~\eqref{1.1} satisfying \eqref{cond-inf}.
Note that $W_\infty$ satisfies:
\begin{equation*}
W_\infty(0,0)=\sup_{(t,x)\in\R\times\R^N} W(t,x)=\sup_{(t,x)\in\R\times\R^N} W_\infty(t,x).
\end{equation*}
Moreover \eqref{sub-sol} ensures that $W_\infty$ is a sub-solution of the heat equation. Therefore the maximum principle applies and $W_\infty$ is a constant function. Hence the right hand side of \eqref{sub-sol} together with the strict convexity of $\Phi$ provide: 
\begin{equation*}
\begin{cases}
U_\infty(t,x)\equiv U_\infty(t),\;\;V_\infty(t,x)\equiv V_\infty(t),\\
\left(U_\infty F\left(U_\infty,V_\infty\right),V_\infty G\left(U_\infty,V_\infty\right)\right) \cdot \nabla \Phi \left(U_\infty,V_\infty\right)\equiv 0.
\end{cases}.
\end{equation*}
In addition, $\Phi\left(U_\infty(t),V_\infty(t)\right)=\Phi\left(U_\infty(0),V_\infty(0)\right)$ for all $t\in\R$. Since the Lyapunov function is assumed to be strict (see Assumption \ref{ass:Lyapunov} \textit{$(b)$}), we obtain that $U_\infty(t)\equiv u^*$ and $V_\infty(t)\equiv v^*$.
Hence one obtains that
\begin{equation*}
0\leq W[U,V](t,x)\leq \Phi\left(u^*,v^*\right)=0.
\end{equation*}
This proves Claim \ref{claim-popol}, completing the proof of Theorem~\ref{THEO.asymptotics}.\\

We complete this section by giving two examples to which the above theorem applies. We consider the simple Lotka-Volterra prey-predator system with logistic growth of the prey population proposed in \eqref{Lotka} and 
the so-called Holling type II prey-predator system.

\begin{example}[Lotka-Volterra prey-predator system]
The Lotka-Volterra prey-predator system is presented in \eqref{Lotka}. 
With this notation the kinetic system is given by the ODE system \eqref{1.1.ODE} with
\begin{equation}\label{vector-field}
F(u,v)=1-u-b v\;\;\text{ and }\;\;G(u,v)=\mu bu-a,
\end{equation}
where $a>0$, $b>0$ and $\mu>0$ are given parameters.
We assume that $\mu b>a$ and we set $\left(u^*,v^*\right)=\left(\frac{a}{\mu b},\frac{\mu b-a}{\mu b^2}\right)\in \mathcal O$, the unique positive stationary state.
Note that $F(\cdot,+\infty) <0$ and $G(0,0)=-a<0<\mu b-a=G(1,0)$.
Hence function $\left(F,G\right)$ satisfies Assumptions \ref{ASS-F}, \ref{ASS-G} as well as \ref{ASS-U} thanks to Theorem~\ref{th:dissip} in Section~\ref{sec:dissip}.
We now aim at applying Theorem \ref{THEO.asymptotics}. To do so, let us consider the strictly convex functional 
\begin{equation*}
\Phi(u,v):=\mu\int_{u^*}^u\frac{\xi-u^*}{\xi}d\xi+\int_{v^*}^v \frac{\eta-v^*}{\eta}d\eta.
\end{equation*}
Then it is easy to check that
\begin{equation*}
\left(uF(u,v),vG(u,v)\right) \cdot \nabla \Phi (u,v)=-\mu\left(u-u^*\right)^2 \leq 0 ,\;\forall (u,v)\in\mathcal O.
\end{equation*}
Furthermore, it easily follows from the above expression that $\Phi$ is a strict Lyapunov function in the sense of Assumption \ref{ass:Lyapunov} \textit{$(b)$}.
Hence we conclude that Theorem~\ref{THEO.asymptotics} applies, so that for any compactly supported initial data, the solution of the PDE system with diffusivity ratio $d=1$ converges inside the final zone to the constant equilibrium $(u^*, v^*)$.
\end{example}

\begin{example}[Holling II prey-predator system]
Holling II prey-predator system corresponds to problem \eqref{predator-prey} where
\begin{equation*}
h(u)=1-u \text{ and } \Pi(u)=\frac{mu}{b+u}.
\end{equation*}
We refer to~\cite{Holling} for a background on this model. If $\frac{\mu m}{b+1}>a$ then $G(0,0)=-a<0<\frac{\mu m}{b+1}-a$ and the ODE system has a unique positive stationary state $\left(u^*,v^*\right)\in \mathcal O$ with 
$$
v^*=\frac{1}{m}(1-u^*)(b+u^*)\text{ and }\mu\Pi(u^*)=a. 
$$
Moreover, $F(\cdot,+\infty)<0$. Hence, as in the previous example, Assumptions \ref{ASS-F}, \ref{ASS-G} and \ref{ASS-U} are satisfied. Now following \cite{CHL}, we consider the strictly convex function $\Phi$ defined on $\mathcal O$ by
\begin{equation*}
\Phi(u,v)=\int_{u^*}^u \mu \frac{\Pi(\xi)-\Pi(u^*)}{\Pi(\xi)}d\xi+\int_{v^*}^v \frac{\eta-v^*}{\eta}d\eta,
\end{equation*}
so that one has
\begin{equation*}
\begin{split}
J(u,v):&=\left(uF(u,v),vG(u,v)\right) \cdot \nabla \Phi (u,v)\\
&=\frac{\mu}{m}\left(\Pi(u)-\Pi(u^*)\right)\left((1-u)(b+u)-mv^*\right).
\end{split}
\end{equation*}
Hence one gets $J(u,v)\leq 0$ for all $(u,v)\in\mathcal O$ as soon as $b\geq mv^*$.
Finally, as in the previous example, we conclude that Theorem~\ref{THEO.asymptotics} applies when the following conditions are satisfied
\begin{equation*}
\frac{\mu m}{b+1}>a\text{ and }b\geq mv^*.
\end{equation*}
And thus, in that case, for compactly supported initial data, the solution of the Holling II prey-predator system with diffusivity ratio $d=1$ converges  to $(u^*,v^*)$ inside the final zone.

\end{example}

\end{document}